\newtheorem{theorem}{Theorem}[section]
\newtheorem{lemma}[theorem]{Lemma}
\newtheorem{corollary}[theorem]{Corollary}
\newtheorem{proposition}[theorem]{Proposition}
\theoremstyle{definition}
\newtheorem{definition}[theorem]{Definition}
\newtheorem{example}[theorem]{Example}
\newtheorem{remark}[theorem]{Remark}
\newtheorem*{theorem*}{Theorem}
\newtheorem*{conjecture*}{Conjecture}
\newcommand{\isom}{\cong}
\newcommand{\N}{\mathbb{N}}
\newcommand{\Z}{\mathbb{Z}}
\newcommand{\Q}{\mathbb{Q}}
\newcommand{\R}{\mathbb{R}}
\def\Ddots{\mathinner{\mkern1mu\raise\p@
\vbox{\kern7\p@\hbox{.}}\mkern2mu
\raise4\p@\hbox{.}\mkern2mu\raise7\p@\hbox{.}\mkern1mu}}
\newcommand{\normal}[1]{\langle\langle #1 \rangle\rangle}
\def\immerses{\looparrowright}
\def\injects{\hookrightarrow}
\DeclareSymbolFontAlphabet{\amsmathbb}{AMSb}
\DeclareMathOperator{\rk}{rk}
\DeclareMathOperator{\pr}{pr}
\DeclareMathOperator{\core}{Core}
\DeclareMathOperator{\bs}{BS}
\DeclarePairedDelimiter\abs{\lvert}{\rvert}
\let\oldabs\abs
\def\abs{\@ifstar{\oldabs}{\oldabs*}}
\newcounter{cases}
\newcounter{subcases}[cases]
\newenvironment{mycase}
{
    \setcounter{cases}{0}
    \setcounter{subcases}{0}
    \newcommand{\case}
    {
        \par\indent\stepcounter{cases}\textbf{Case \thecases.}
    }
    
}
{
    \par
}
\renewcommand*\thecases{\arabic{cases}}
\newcommand{\fakeenv}{} 
\newenvironment{restate}[2]  
{ 
 \renewcommand{\fakeenv}{#2} 
 \theoremstyle{plain} 
 \newtheorem*{\fakeenv}{#1~\ref{#2}} 
 \begin{\fakeenv}
}
{
 \end{\fakeenv}
}
\begin{document}

\title{Hyperbolic one-relator groups}
\author{Marco Linton}

\date{\today}

\address{University of Oxford, Oxford, OX2 6GG, UK}

\email{marco.linton@maths.ox.ac.uk}

\begin{abstract}
We introduce two families of two-generator one-relator groups called primitive extension groups and show that a one-relator group is hyperbolic if its primitive extension subgroups are hyperbolic. This reduces the problem of characterising hyperbolic one-relator groups to characterising hyperbolic primitive extension groups. These new groups moreover admit explicit decompositions as graphs of free groups with adjoined roots. In order to obtain this result, we characterise $2$-free one-relator groups with exceptional intersection in terms of Christoffel words, show that hyperbolic one-relator groups have quasi-convex Magnus subgroups and build upon the one-relator tower machinery developed in the authors previous article.
\end{abstract}

\maketitle

\section{Introduction}

Since the introduction of hyperbolic groups by Gromov in the 80's, a wealth of powerful tools have been developed to study them. Thus, when studying a class of groups, a classification of those that are hyperbolic can be very useful. We here focus on the class of one-relator groups; that is, groups of the form $F(\Sigma)/\normal{w}$, where $F(\Sigma)$ denotes the free group generated by $\Sigma$. The best possible statement that one could hope for is known as Gersten's conjecture \cite{gersten_92} which asserts that one-relator groups without Baumslag--Solitar subgroups are hyperbolic. In this article, we present some progress in this direction, building on our previous work in \cite{me_22}.

A \emph{Magnus subgroup} of a one-relator group is a subgroup generated by a subset of the generators $\Sigma$, omitting at least one generator mentioned in the cyclic reduction of $w$. The theory of one-relator groups originated in 1930, when Magnus proved the Freiheitssatz \cite{magnus_30}. 

\begin{theorem*}[Freiheitssatz]
Magnus subgroups of one-relator groups are free.
\end{theorem*}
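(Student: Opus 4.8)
The plan is to follow Magnus's original inductive argument, carrying out an induction on the length $|w|$ of the relator. Replacing $w$ by a cyclic conjugate changes neither $G = F(\Sigma)/\normal{w}$ up to isomorphism nor the set of generators occurring in the cyclic reduction of $w$, so I may assume $w$ is cyclically reduced. Generators of $\Sigma$ not occurring in $w$ split off as a free factor of $G$, and a subgroup generated by part of a basis of a free product decomposes accordingly, so it suffices to treat the case where every generator of $\Sigma$ occurs in $w$. Finally, since a subgroup generated by a subset of a free basis of a free group is again free on that subset, it is enough to prove the extreme case: that upon omitting a single generator $a$ occurring in $w$, the natural map $F(\Sigma \setminus \{a\}) \to G$ is injective. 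For the base case $|w| = 1$ we have $w = a^{\pm 1}$ and $G \cong F(\Sigma \setminus \{a\})$, so there is nothing to prove.

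For the inductive step, write $\sigma_t(w) \in \Z$ for the exponent sum of a generator $t$ in $w$. The heart of the argument is the case $\sigma_a(w) = 0$, where $a$ is the omitted generator. Here I would use $a$ as a stable letter: setting $x_{s,i} = a^{i} s a^{-i}$ for $s \in \Sigma \setminus \{a\}$ and $i \in \Z$, the hypothesis $\sigma_a(w) = 0$ lets me rewrite $w$ as a word $\hat{w}$ in the $x_{s,i}$ not involving $a$, with indices ranging over a finite interval $\mu \le i \le \nu$. This exhibits $G$ as an HNN extension of the one-relator group $G_0 = \langle x_{s,i} \ (\mu \le i \le \nu) \mid \hat{w} \rangle$ with stable letter $a$, whose associated subgroups are the two Magnus subgroups of $G_0$ obtained by discarding the top, respectively the bottom, index. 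Crucially $\hat{w}$ is strictly shorter than $w$, since all occurrences of $a$ have been deleted, so the inductive hypothesis applies to $G_0$: the associated subgroups are free, the HNN extension is genuine, and by Britton's lemma $G_0$ embeds in $G$. Now $F(\Sigma \setminus \{a\})$ maps to the subgroup $\langle x_{s,0} : s \neq a \rangle$ of $G_0$, which is a Magnus subgroup of $G_0$ (it omits every $x_{s,i}$ with $i \neq 0$, and such a generator occurs in $\hat{w}$ because $a$ occurs in $w$ and hence $\mu < \nu$). By the inductive hypothesis this subgroup is free on the $x_{s,0}$, and since $G_0 \hookrightarrow G$ the desired map is injective.

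The remaining case, $\sigma_a(w) \neq 0$, is where the real difficulty lies, and I expect it to be the main obstacle. One cannot simply change the omitted generator's exponent sum to zero by a change of basis: any automorphism of $F(\Sigma)$ fixing $\Sigma \setminus \{a\}$ preserves $\sigma_a(w)$ up to sign, while a basis change mixing $a$ with the other generators destroys the very subgroup $F(\Sigma \setminus \{a\})$ whose injectivity we are trying to establish. The standard resolution is to reduce to the exponent-sum-zero case indirectly, by passing to an auxiliary one-relator group in which some generator does have exponent sum zero and to which the inductive hypothesis can be applied. The delicate point, and the crux of the whole proof, is to perform this reduction while simultaneously keeping the relevant relator strictly shorter (so the induction on length terminates) and keeping $F(\Sigma \setminus \{a\})$ identified with, or contained in, a genuine Magnus subgroup of the auxiliary group (so that the conclusion transfers back to $G$). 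Once this bookkeeping is arranged, the argument closes by the same HNN-and-Britton mechanism as in the exponent-sum-zero case.
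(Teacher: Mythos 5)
A preliminary remark on the comparison you asked for: the paper contains no proof of this statement. It quotes the Freiheitssatz as Magnus's classical 1930 theorem and cites it, so the only meaningful benchmark is the standard literature (Magnus's original induction, as presented in Magnus--Karrass--Solitar or Lyndon--Schupp), which is exactly the route you are attempting. Your preliminary reductions and your treatment of the case $\sigma_a(w) = 0$ are correct and essentially complete: the rewriting in the conjugates $x_{s,i} = a^i s a^{-i}$, the HNN decomposition of $G$ over the index-shift isomorphism between two Magnus subgroups of $G_0$, the strict length decrease $\abs{\hat w} < \abs{w}$, the observation that $\mu < \nu$ so that $\langle x_{s,0} : s \neq a\rangle$ really is a Magnus subgroup of $G_0$, and the appeal to Britton's lemma all work as you describe.

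The genuine gap is the case $\sigma_a(w) \neq 0$, which you explicitly defer; as written you have proved only half the theorem, and the induction does not close. The missing construction is the following. If $a$ is the only generator occurring in $w$ the claim is trivial, so choose a second generator $b$ occurring in $w$, set $\alpha = \sigma_a(w)$ and $\beta = \sigma_b(w)$, and embed $F(\Sigma)$ into $F(x, y, \Sigma\setminus\{a,b\})$ by $a\mapsto yx^{-\beta}$, $b\mapsto x^{\alpha}$, fixing the remaining generators; one checks these images form a free basis of the subgroup they generate, so $G$ embeds in the auxiliary one-relator group $G' = \langle x, y, \Sigma\setminus\{a,b\} \mid r'\rangle$ where $r'$ is the image of $w$. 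Then $\sigma_x(r') = \alpha(-\beta) + \beta\alpha = 0$, so the zero-exponent-sum machinery applies to $G'$ with stable letter $x$. The two points you flagged as "bookkeeping" are exactly where the content lies, and neither is automatic. First, the induction measure cannot be $\abs{r'}$, which is typically larger than $\abs{w}$; it must be the length of the rewritten relator $\widehat{r'}$, which equals the number of non-$x$ letters of $r'$, namely $\abs{w}$ minus the number of occurrences of $b^{\pm 1}$ in $w$, and this is strictly smaller precisely because $b$ occurs. Second, the image of $F(\Sigma\setminus\{a\})$ in $G'$ is $\langle x^{\alpha}, c, d, \ldots\rangle$, which is \emph{not} a Magnus subgroup of the vertex group of the HNN decomposition of $G'$, so the conclusion does not transfer back by a bare appeal to the inductive hypothesis; one must additionally show, using the inductive Freiheitssatz for the vertex group together with the HNN normal form over $\langle x\rangle$, that $\langle x, c, d, \ldots\rangle$ is free on $\{x, c, d, \ldots\}$, whence $\langle x^{\alpha}, c, d, \ldots\rangle$ is free of the right rank. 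Without these two verifications the reduction you gesture at is only a statement of intent, not a proof.
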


The proof of the Freiheitssatz makes use of a hierarchy of one-relator groups known as the Magnus hierarchy: a one-relator group $G$ splits as a HNN-extension with one-relator vertex group $H$ of lower `complexity' and where the edge groups are Magnus subgroups of $H$. By understanding the splittings that arise in this way, one may use induction to make conclusions about $G$. See \cite{magnus_04,lyn_00,me_22} for various versions and applications of the Magnus hierarchy. 

Almost a century later, the Magnus hierarchy is still a powerful tool for the study of one-relator groups. However, the splittings that arise remain somewhat mysterious. In \cite{me_22}, the case that $G$ is a $2$-free one-relator group was considered in detail: there it was shown that if $G$ is a finitely generated $2$-free one-relator group, then $G$ is hyperbolic and acts acylindrically on any Bass-Serre tree associated with a one-relator splitting. When $G$ has torsion, the same properties hold \cite{newman_68,wise_21,me_22}. On the other hand, the fact that Baumslag--Solitar groups cannot act acylindrically on a tree \cite{minasyan_15} implies that the general case is more complicated.

We say that a one-relator group $F(\Sigma)/\normal{w}$ has an \emph{exceptional intersection} if there are subsets $A, B\subset \Sigma$ that generate Magnus subgroups and such that the following holds in the quotient:
\[
\langle A\rangle\cap \langle B\rangle \neq \langle A\cap B\rangle.
\]
These groups were first studied by Collins in \cite{collins_04}. Howie then obtained generalisations in the context of one-relator products of locally indicable groups in \cite{howie_05}. Examples of one-relator groups with exceptional intersections include torus knot groups $\langle a, b\mid a^p = b^q\rangle$ and closed orientable surface groups of genus at least two $\langle a_1, b_1, ..., a_g, b_g \mid [a_1, b_1] = [a_2, b_2]...[a_g, b_g]\rangle$.

In \cite{me_22}, it is shown that if $G = H*_{\psi}$ is a one-relator splitting where $G$ does not contain any Baumslag--Solitar subgroups, $H$ is hyperbolic and the edge groups of the splitting are quasi-convex and do not have exceptional intersection, then $G$ acts acylindrically on its Bass-Serre tree and so is hyperbolic by \cite{bestvina_92_combination}. In light of this (and Theorem \ref{quasiconvex_magnus}), when attempting to understand the hyperbolicity of one-relator groups, the case of interest lies when $H$ has an exceptional intersection. In this article, we take a step in this direction and characterise $2$-free one-relator groups with exceptional intersection.

\begin{restate}{Theorem}{exceptional_characterisation}
Let $G$ be a one-relator group with exceptional intersection. One of the following holds:
\begin{enumerate}
\item\label{itm:not_neg} $G$ is $2$-free,
\item\label{itm:has_GBS} there is a two-generator one-relator generalised Baumslag--Solitar subgroup $H<G$ such that every non-free two-generator subgroup of $G$ is conjugate into $H$.
\end{enumerate}
\end{restate}

It turns out that $2$-free one-relator groups with an exceptional intersection have presentations of a particular form that can easily be described in terms of Christoffel words. We call these primitive exceptional intersection groups; see Section \ref{pei_section} for their description. The two-generator generalised Baumslag--Solitar subgroup appearing in the statement is a $w$-subgroup in the sense of \cite{louder_21}.

Thanks to the dichotomy provided by Theorem \ref{exceptional_characterisation}, with a little work we are able to establish quasi-convexity of Magnus subgroups of all hyperbolic one-relator groups.

\begin{restate}{Theorem}{quasiconvex_magnus}
Magnus subgroups of hyperbolic one-relator groups are quasi-convex.
\end{restate}

As a consequence, we prove Theorem \ref{main_generalised}, a strengthening of the main tool from \cite{me_22}. Note that hyperbolic one-relator groups can have distorted free subgroups as there are many of hyperbolic (finitely generated free)-by-cyclic one-relator groups. See \cite{kapovich_99} for an explicit example. Theorem \ref{quasiconvex_magnus} was already known in the case of one-relator groups with torsion by Newman's Spelling Theorem \cite{newman_68}. It was also known in the case of hyperbolic one-relator groups with quasi-convex one-relator hierarchies \cite{me_22}.
 
Before stating our main result, we introduce two new families of two-generator one-relator groups. Let $p/q\in \Q_{>0}$ and denote by
\[
A_{i, j} = \{a_i, a_{i+1}, ..., a_j\}~. 
\]
The first family is the fundamental group of the following graph of groups:
\[
\begin{tikzcd}[sep=2cm]
{F(A_{0, k-1})} \arrow[rr, "{\langle A_{1, k-1}, x\rangle = \langle A_{1, k-1}, w^p\rangle}"', no head] &  & {F(A_{1, k-1})*\langle w\rangle} \arrow[rr, "{\langle A_{1, k-1}, w^q\rangle = \langle A_{1, k-1}, y\rangle}"', no head] &  & {F(A_{1, k})} \arrow[llll, "{\langle A_{0,k-1}\rangle = \langle A_{1, k}\rangle}"', no head, bend right]
\end{tikzcd}
\]
where $\{\langle x\rangle, \langle y\rangle\}$ form a malnormal family in $F(A_{0, k})$ and $x, y\notin F(A_{1, k-1})$. The upper edge homomorphism simply shifts the generators along. The second family is the fundamental group of the following graph of groups:
\[
\begin{tikzcd}[sep=2cm]
{F(A_{0, k-1})} \arrow[rr, "{\langle A_{1, k-1}, x\rangle = \langle A_{1, k-1}, x\rangle}"', no head] &  & H \arrow[rr, "{\langle A_{1, k-1}, y\rangle = \langle A_{1, k-1}, y\rangle}"', no head] &  & {F(A_{1, k})} \arrow[llll, "{\langle A_{0,k-1}\rangle = \langle A_{1, k}\rangle}"', no head, bend right]
\end{tikzcd}
\]
where $H$ takes the following form
\[
\begin{tikzcd}[sep=2cm]
{F(A_{1, k-1})} \arrow[r, "\langle z\rangle = \langle w^p\rangle", no head] & \langle w\rangle \arrow[r, "\langle w^q\rangle = \langle xy\rangle", no head] & {F(x, y)}
\end{tikzcd}
\]
and where $\langle z\rangle$ is malnormal in $F(A_{1, k-1})$ and $x, y\notin F(A_{1, k-1})$. We call these groups \emph{primitive extension groups}. 

In Section \ref{pe_section} it is shown that primitive extension groups are two-generator one-relator groups. Examples of primitive extension groups include all one-relator ascending HNN-extensions of finitely generated free groups and, in particular, $\bs(1, n)$. These correspond to the families where $x = a_0$ and $p = 1$, see Example \ref{ascending_example}. By \cite{mutanguha_21}, the hyperbolicity of one-relator groups in these subfamilies are understood. Although two-generator one-relator hyperbolic groups have received some attention \cite{kapovich_99_two, gardam_21_jsj}, there are currently no further criteria that do not rely on small cancellation-like criteria to determine when a primitive extension group is hyperbolic.

Our main result can now be stated as follows.

\begin{restate}{Theorem}{main}
A one-relator group is hyperbolic (and virtually special) if its primitive extension subgroups are hyperbolic (and virtually special).
\end{restate}

It is immediate that Gersten's conjecture \cite{gersten_92} needs only to be proved for primitive extension groups in order to hold for all one-relator groups.

\begin{restate}{Corollary}{gersten_corollary}
Gersten's conjecture is true if it is true for primitive extension groups.
\end{restate}

\subsection*{Acknowledgments} The author would like to thank Saul Schleimer for many helpful conversations, in particular for pointing him in the direction of Christoffel words. A large part of the work in this article also appears in the author's PhD thesis, completed at the University of Warwick. While carrying out part of this work, the author also received funding from the European Research Council (ERC) under the European Union's Horizon 2020 research and innovation programme (Grant agreement No. 850930).

\section{Preliminaries on free groups}

In this section, we recall some standard material from the theory of free groups and prove some technical results that will be of use to us for the proof of Theorem \ref{exceptional_characterisation}. 

If $\Sigma$ is a set, we denote by $F(\Sigma)$ the free group, freely generated by $\Sigma$. If $\Delta$ is another set, we write $F(\Sigma, \Delta) = F(\Sigma\sqcup\Delta)$.

\subsection{Proper powers}

We say an element $w\in F(\Sigma)$ is a \emph{proper power} if there is some $u\in F(\Sigma)$ and $n\geq 2$ such that $w = u^n$.

\begin{lemma}
\label{no_double_subword_1}
Let $1 \neq y^{-1}zy\in F(\Sigma)$ be freely reduced with $z$ cyclically reduced and not equal to a proper power. Then:
\begin{enumerate}
\item if $y = 1$ and there is some $i\in \Z$, $g, h\in F(\Sigma)$ such that $z^i = gzh$ is freely reduced, then $i\geq 1$ and $g,h\in \langle z\rangle$,
\item if $y\neq 1$ and there is some $i, j\in \Z$, $g, h\in F(\Sigma)$ such that $y^{-1}z^iy = gy^{-1}z^jyh$ is freely reduced, then $g, h = 1$ and $i = j$.
\end{enumerate}
\end{lemma}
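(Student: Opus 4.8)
The plan is to reduce both statements to combinatorial facts about freely reduced words, using the fact that $z$ is cyclically reduced and not a proper power to rule out nontrivial self-overlaps. The key underlying principle is that if a cyclically reduced word $z$ is not a proper power, then the bi-infinite word $\cdots z z z \cdots$ has no nontrivial period shorter than $|z|$; equivalently, $z$ cannot overlap with a cyclic rotation of itself except trivially. This is the standard fact that the only way a power $z^i$ can equal $g z^j h$ with everything freely reduced and no cancellation at the seams is for $g$ and $h$ to themselves be powers of $z$ that account exactly for the shift.

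For part (1), I would argue as follows. Assume $z^i = gzh$ is freely reduced. First, since the right-hand side literally contains $z$ as a subword and the left-hand side is a (possibly negative) power of the cyclically reduced word $z$, reducedness forces $i \geq 1$: a negative power $z^i = (z^{-1})^{|i|}$ would, after cyclic reduction of $z$, spell out $z^{-1}$ repeatedly, and one checks that $z$ cannot appear as a subword of a positive power of $z^{-1}$ unless $z = z^{-1}$, impossible for $z \neq 1$ cyclically reduced; hence $i \geq 1$. Now write the equation as a factorization of the word $z^i$ and locate the distinguished copy of $z$ occurring as the middle factor. Because $z^i$ is itself a concatenation of $i$ copies of $z$ with no cancellation (cyclic reducedness guarantees $z^i$ is freely reduced), the middle $z$ is a subword of $z^i$ starting at some position $\ell$. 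The crux is that this occurrence must be \emph{aligned} with the block structure, i.e. $\ell$ is a multiple of $|z|$; otherwise $z$ would overlap two adjacent blocks at a nontrivial offset, producing a period of $z^2$ strictly smaller than $|z|$, which by the primitivity (non-proper-power) of $z$ forces that offset to be $0$. Once the occurrence is aligned, $g$ and $h$ are exactly the prefix and suffix blocks, so $g = z^a$ and $h = z^b$ with $a + 1 + b = i$, giving $g, h \in \langle z\rangle$.

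For part (2), the idea is the same but applied to the word $y^{-1} z^i y$, whose distinguishing feature is that the conjugating syllables $y^{-1}$ and $y$ mark the boundaries. Write $y^{-1} z^i y = g\, y^{-1} z^j y\, h$ and consider where the inner occurrence of $y^{-1} z^j y$ sits inside the reduced word $y^{-1} z^i y$. The presence of $y^{-1}$ (with its first letter differing from the first letter of $z$, by freeness of the reduction of $y^{-1} z y$) means the factor $y^{-1}$ on the right can only be matched against the $y^{-1}$ on the left: an interior occurrence of the syllable $y^{-1} z^{\cdots}$ would again force a forbidden self-overlap of the $z$-power against $y$ or against itself. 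Thus the leading $y^{-1}$ blocks coincide, forcing $g = 1$, and symmetrically the trailing $y$ blocks coincide, forcing $h = 1$; comparing lengths then gives $i = j$. The careful point is to use the hypothesis that $y^{-1} z y$ is freely reduced (so there is genuine cancellation-free juxtaposition of $y^{-1}$, $z$, and $y$) to guarantee that the only copies of the marker syllables are the outermost ones.

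I expect the main obstacle to be the overlap analysis in part (1): making rigorous the claim that a copy of $z$ inside $z^i$ must be block-aligned. This is where one genuinely invokes that $z$ is not a proper power, and the cleanest route is to phrase it via periods, showing that a misaligned occurrence yields a nontrivial period $d < |z|$ of $z^2$ with $d \mid |z|$, hence $z = (\text{prefix of length } d)^{|z|/d}$, contradicting primitivity unless the offset vanishes. Once this alignment lemma is in hand, both parts follow by bookkeeping of block indices, and part (2) reduces to part (1) by peeling off the marker syllables $y^{\pm 1}$.
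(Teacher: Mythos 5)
Your overall strategy is the same as the paper's: for part (1) the whole content is that a misaligned occurrence of $z$ inside a power of $z^{\pm 1}$ forces $z$ to be a proper power, and your periodicity/Fine--Wilf phrasing is equivalent to the paper's factorisation $z = z'z'' = z''z'$ together with Lyndon's lemma that commuting words are powers of a common word; part (2) is then deduced from part (1) plus the non-cancellation at the $y^{-1}\cdot z$ and $z\cdot y$ junctions, exactly as the paper indicates.

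One step of your sketch does not go through as written: in ruling out $i\leq -1$ you claim that $z$ cannot occur as a subword of a positive power of $z^{-1}$ ``unless $z=z^{-1}$.'' That only disposes of the block-aligned occurrence. A misaligned occurrence at offset $d$ gives $z = P^{-1}Q^{-1} = (QP)^{-1}$ where $z = PQ$ and $|P| = |z|-d$, i.e.\ $z = P^{-1}z^{-1}P$, so what you actually get is that $z$ is \emph{conjugate} to its inverse, not equal to it. You then need the separate (standard) fact that no non-trivial element of a free group is conjugate to its own inverse --- this is precisely the ingredient the paper invokes for the case $i=-2$. With that patch, and noting that in part (2) the relevant consequences of $y^{-1}zy$ being reduced are that the \emph{last} letter of $y^{-1}$ does not cancel with the first letter of $z$ and the last letter of $z$ does not cancel with the first letter of $y$ (not the first letter of $y^{-1}$ versus the first letter of $z$, as you wrote), your argument is complete and matches the paper's.
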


\begin{proof}
Suppose for a contradiction that the first assertion does not hold. We may assume that $i = 2$ or $i = -2$. Then $z = z'z'' = z''z'$ in the first case and $z = z'z'' = (z')^{-1}(z'')^{-1}$ in the second. But then the first case cannot happen by \cite[Lemma 3]{lyndon_62} and the second case cannot happen as a non-trivial element of a free group cannot be conjugate to its own inverse. Thus, the first assertion holds.

For the proof of the second assertion, we use the first assertion and the fact that $z$ is cyclically reduced.
\end{proof}

\begin{lemma}
\label{no_double_subword_2}
Let $F(\Sigma)$ be a free group with $\Sigma = A\sqcup B$ and let $b\in \langle B\rangle$ and $z\in F(\Sigma)$ be freely reduced elements. Suppose that $z$ begins and ends with generators in $A\sqcup A^{-1}$, that $zb$ is not a proper power and that there are elements $i\in \Z$, $g, h\in F(\Sigma)$ such that $(zb)^i = gzh$ is freely reduced. Then $i\geq 1$ and $g\in \langle zb\rangle$.
\end{lemma}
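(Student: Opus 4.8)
Write $c \coloneqq zb$. Since $z$ ends with a letter of $A\sqcup A^{-1}$ and $b\in\langle B\rangle$ begins (when nontrivial) with a letter of $B\sqcup B^{-1}$, there is no cancellation in $zb$ nor at the seam when $zb$ is cycled; thus $c$ is cyclically reduced, and as it is not a proper power it is a primitive word of length $\ell\coloneqq|z|+|b|$. Consequently $c^i$ is already freely reduced for every $i\ge 1$, and the $\ell$-periodic bi-infinite word $P=\ldots c\,c\,c\ldots$, with letters $P_t$ satisfying $P_{t+\ell}=P_t$, has least period exactly $\ell$. The hypothesis $(zb)^i=gzh$ presents $z$ as a factor of $c^i$ starting at position $|g|$, and since $g$ is the corresponding prefix of $c^i$, the conclusion $g\in\langle zb\rangle$ is equivalent to $\ell\mid|g|$. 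So the statement reduces to: \emph{every occurrence of $z$ in $P$ begins at a position divisible by $\ell$}, and I would first dispose of $i\le 0$. Here $i=0$ is impossible because a freely reduced word with the nonempty factor $z$ is nontrivial; and $i<0$ would make $z^{-1}$ a factor of $(zb)^{|i|}$, which I would exclude by an orientation argument, noting that in $P$ the maximal $B$-runs separating consecutive copies of $z$ are the blocks $b$, each flanked on both sides by the $A$-letters beginning and ending $z$, a one-sided pattern that cannot accommodate the reversed word $z^{-1}$.

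Set $n\coloneqq|z|$, so $|b|=\ell-n$, and suppose towards a contradiction that $z$ occurs in $P$ at a position $m$ with $0<\bar m<\ell$, where $\bar m\equiv m\pmod\ell$; aligning this with the occurrence at $0$ gives
\[
P_t=P_{t+\bar m}\qquad(0\le t<n).
\]
The two end-letters of $z$ now do the work. The first letter $P_{\bar m}=z[0]$ lies in $A\sqcup A^{-1}$, whereas every position of $P$ inside a $b$-block carries a letter of $B\sqcup B^{-1}$; hence $\bar m<n$, so the two copies of $z$ genuinely overlap. Symmetrically, the last letter $P_{\bar m+n-1}=z[n-1]$ lies in $A\sqcup A^{-1}$, so $\bar m+n-1$ cannot land in the $b$-block $[n,\ell)$, which forces $\bar m+n-1\ge\ell$, i.e. $\bar m>|b|$. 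Thus $|b|<\bar m<n$, and the displayed relation says that the factor $P_0P_1\cdots P_{n+\bar m-1}$, of length $n+\bar m>\ell$, has period $\bar m$ in addition to the ambient period $\ell$.

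Put $d\coloneqq\gcd(\bar m,\ell)<\ell$ and split according to the theorem of Fine and Wilf. If $d\ge|b|$, then $\bar m+\ell-d\le n+\bar m$, so Fine--Wilf applies to the factor above and forces period $d$ on it; as $n+\bar m>\ell$ this period descends to the factor $P_0\cdots P_{\ell-1}=c$, and since $d\mid\ell$ and $d<\ell$ the word $c$ is a proper power --- contradicting primitivity. If instead $d<|b|$, I would argue on the equivalence relation $\sim$ on $\Z/\ell\Z$ generated by $t\sim t+\bar m$ for $t\in\{0,\dots,n-1\}$, on whose classes $P$ is constant. The class of $0$ contains the successive multiples $0,\bar m,2\bar m,\dots$ read modulo $\ell$ up to the first one falling in the $B$-block $[n,\ell)$; since $[n,\ell)$ has length $|b|>d$ it must contain a multiple of $d$, and that multiple is reached along this orbit and hence lies in the class of $0$. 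Constancy of $P$ then forces this $B$-block position to carry the letter $z[0]\in A\sqcup A^{-1}$, contradicting $\Sigma=A\sqcup B$. Either way $0<\bar m<\ell$ is untenable, so $\ell\mid|g|$ and $g\in\langle zb\rangle$.

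I expect the genuinely delicate point to be the second case, $d<|b|$: here the block $b$ creates a gap that the Fine--Wilf periodicity cannot bridge, so the contradiction must be extracted from the alphabet partition rather than from word-length periodicity alone. This is exactly where both hypotheses --- that $z$ is bordered by $A$-letters and that $b$ is a word in $B$ --- are indispensable; by contrast the overlap set-up, the reduction to occurrences in $P$, and the translation of overlaps into period relations are comparatively routine, as is the check that the period produced in the first case really descends to $c$.
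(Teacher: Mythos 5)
Your argument is essentially correct but follows a genuinely different route from the paper's. The paper first reduces to $b\neq 1$ via Lemma \ref{no_double_subword_1}, then extracts from a misaligned occurrence of $z$ the pair of equations $z=z_1z_2z_3=z_3bz_1$ and runs an induction on $\abs{z_1}+\abs{z_3}$ (powered by Lyndon's lemma on equations of the form $uv=vu'$) to conclude $z_2=b$, whence $zb$ is a cyclically reduced word equal to a non-trivial cyclic permutation of itself and hence a proper power. You instead convert the misalignment into two periods $\bar m$ and $\ell$ of a factor of length $n+\bar m$ of the bi-infinite periodic word and split on whether Fine--Wilf applies. I checked the boundary inequalities $\abs{b}<\bar m<n$, the Fine--Wilf length condition $n+\bar m\ge\bar m+\ell-d\iff d\ge\abs{b}$, and the orbit argument propagating the letter $z[0]\in A^{\pm 1}$ of the class of $0$ into the $B$-block $[n,\ell)$; all are sound, and the two cases do cover everything (when $d=\abs{b}$ both work). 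What your route buys is a clean separation of where pure periodicity suffices from where the alphabet partition $\Sigma=A\sqcup B$ is genuinely needed; the paper's induction avoids Fine--Wilf at the cost of a less transparent reduction.

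Two loose ends. First, your opening claim that $c=zb$ is cyclically reduced fails when $b=1$: then $c=z$, and $z$ is only assumed to begin and end with letters of $A\sqcup A^{-1}$, which does not preclude the last letter being the inverse of the first (e.g.\ $z=a b' a^{-1}$); in that case $c^i$ is not freely reduced and your positional setup collapses. The paper disposes of $b=1$ at the outset by quoting Lemma \ref{no_double_subword_1}, and you need to do the same (or pass to the cyclically reduced core) before asserting $\ell$-periodicity. Second, your treatment of $i<0$ is only a gesture; the clean version is that an occurrence of $z$ in $(b^{-1}z^{-1})^{\abs{i}}$ must, by the same first- and last-letter analysis, either align with a $z^{-1}$-block (forcing $z=z^{-1}$, impossible) or straddle a $b^{-1}$-block, forcing a non-trivial prefix of $z$ to equal its own inverse --- which is exactly the observation the paper records. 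Neither issue touches the core of your argument.
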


\begin{proof}
By Lemma \ref{no_double_subword_1}, we may assume that $b\neq 1$. If $i\leq -1$, since $b$ does not mention any $A\sqcup A^{-1}$ generators, it follows that $z$ has a non-trivial prefix equal to its own inverse. Since this is not possible, we may also assume that $i\geq 1$.

Now suppose for a contradiction that $i\geq 1$ and $g\notin \langle zb\rangle$. Then we have equalities of the form:
\begin{align*}
z &= z_1 z_2 z_3~,\\
z &= z_3 b z_1~,
\end{align*}
for some $z_1, z_2, z_3\in F(\Sigma)$, where the words on the right hand side are freely reduced. 

We show by induction on $\abs{z_1} + \abs{z_3}$ that $z_2 = b$. If $\abs{z_1} = \abs{z_3}$, then clearly $z_2 = b$. So suppose that $\abs{z_1}<\abs{z_3}$, the other case is symmetric. Since the first letter of $z_3$ is in $A\sqcup A^{-1}$ and $b\in \langle B\rangle$, we have that $\abs{z_3}>\abs{z_1z_2}$. Now by \cite[Lemma 2]{lyndon_62}, we have $z_3 = (z_1z_2)^iz'$ for some $i\geq 1$ and where $z'$ is a proper prefix of $z_1z_2$. Then we get:
\begin{align*}
    z &= (z_1z_2)^{i+1}z'~,\\
    z &= (z_1z_2)^iz'bz_1~.
\end{align*}
By comparing suffixes, we have:
\[
z_1z_2z' = z'bz_1~.
\]
But now we obtain equalities of the same form as before, with $z'$ playing the role of $z_3$. Since $\abs{z'}<\abs{z_3}$, by induction, we see that $z_2 = b$. 

If $z_2 = b$, then $zb$ is a conjugate of itself and so must be a proper power by \cite[Lemma 3]{lyndon_62}. Thus we obtain the required contradiction and conclude that $g \in \langle zb\rangle$.
\end{proof}

\subsection{Subgroups}

A \emph{graph} is a $1$-dimensional CW-complex. A morphism of graphs $\Gamma\to \Delta$ is a map sending vertices to vertices and edges homeomorphically to edges. A morphism of graphs is an \emph{immersion}, denoted by $\immerses$, if it is locally injective. It is a fundamental observation due to Stallings \cite{sta_83} that subgroups of free groups can be represented by immersions of pointed graphs $(\Gamma, x)\immerses (\Delta, y)$. The \emph{core} of a graph $\Gamma$, denoted by $\core(\Gamma)$, is the union of the images of all immersed cycles $S^1\immerses \Gamma$. Then conjugacy classes of subgroups of free groups can be represented by immersions of core graphs $\Gamma\immerses \Delta$.

\begin{lemma}
\label{malnormal_condition_1}
Let $F(A, B, C)$ be a free group and let $x\in\langle A, B\rangle - \langle B\rangle$ and $y\in \langle B, C\rangle - \langle B\rangle$. If $H<F(A, B, C)$ is a subgroup of rank two, containing $\langle x, y\rangle$, then one of the following holds:
\begin{enumerate}
\item\label{itm:first} there are elements $u, v$ and non zero integers $i, j$, such that $H = \langle u, v\rangle$ and $x = u^i$, $y = v^j$,
\item \label{itm:second} there are elements $u, v, w$ and non zero integers $i, j$ such that $H = \langle u^{-1}wu, u^{-1}v\rangle$ and $x = u^{-1}w^iu$, $y = v^{-1}w^jv$.
\end{enumerate}
\end{lemma}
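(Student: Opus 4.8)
The plan is to exhibit $F(A,B,C)$ as an amalgam and analyse the action of $H$ on the associated Bass--Serre tree. Write $M_1=\langle A,B\rangle$, $M_2=\langle B,C\rangle$ and $E=\langle B\rangle$, so that $F(A,B,C)=M_1*_E M_2$. As $E$ is a free factor of each $M_\ell$ it is malnormal in each, so $F(A,B,C)$ acts $2$-acylindrically on the Bass--Serre tree $T$; this tree is bipartite with base vertices $v_1\leftrightarrow M_1$, $v_2\leftrightarrow M_2$ joined by an edge $e_0\leftrightarrow E$. Since $x\in M_1\setminus E$ and $y\in M_2\setminus E$, both $x$ and $y$ are elliptic, fixing $v_1$ and $v_2$ respectively but not $e_0$. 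Because free factors are root-closed, no nontrivial power of $x$ or $y$ lies in $E$, so every alternating word in $x,y$ is already in amalgam normal form; hence $\langle x,y\rangle=\langle x\rangle*\langle y\rangle$ is free of rank two with basis $\{x,y\}$, and in particular neither $\langle x,y\rangle$ nor $H$ fixes a point of $T$.

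Next I would cut $H$ down along the splitting. Put $H_1=H\cap M_1$ and $H_2=H\cap M_2$, the $H$-stabilisers of $v_1,v_2$; they are the full vertex groups and $H_1\cap H_2=H\cap E$. I first claim $H\cap E=1$: if $1\ne b\in H\cap E$, then $\langle x,b\rangle\le M_1$ has rank two (a common root of $x$ and $b$ would lie in $E$ by root-closedness, contradicting $x\notin E$), while $\langle x,b\rangle\cap\langle y\rangle\le E\cap\langle y\rangle=1$; since $\langle x,b\rangle$ and $\langle y\rangle$ fix the adjacent vertices $v_1,v_2$, the group they generate is their free product, of rank three, inside the rank-two group $H$ --- impossible. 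Hence $\langle H_1,H_2\rangle=H_1*H_2$ has rank $\rk H_1+\rk H_2$, and since $\langle x,y\rangle\le\langle H_1,H_2\rangle\le H$ forces it to have rank two, we get $\rk H_1+\rk H_2=2$. As $x\in H_1$ and $y\in H_2$ are nontrivial, both factors are infinite cyclic: $H_1=\langle u\rangle$ and $H_2=\langle v\rangle$ with $x=u^i$, $y=v^j$; root-closedness of $M_\ell$ moreover makes $H_\ell$ root-closed in $H$, so $u,v$ are not proper powers.

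It remains to compare $N:=\langle u,v\rangle=\langle u\rangle*\langle v\rangle$ with $H$. If $N=H$ we are in conclusion (1). Otherwise $[H:N]=\infty$, and I would read off the induced graph-of-groups decomposition $\mathbb H=H\backslash T_H$ on the minimal subtree: two of its vertex groups are the maximal cyclic groups $\langle u\rangle,\langle v\rangle$, the bridge $e_0$ carries the trivial group, and $N\ne H$ forces $b_1(\mathbb H)\ge 1$. The Euler-characteristic identity $\sum_{\mathrm{vert}}\rk G_\bullet-\sum_{\mathrm{edge}}\rk G_\bullet=2-b_1(\mathbb H)$ then produces an edge with nontrivial cyclic stabiliser $\langle c\rangle$. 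Such a $c$ fixes an edge, hence is conjugate into $E$, and lies in both adjacent (cyclic) vertex groups; comparing roots of these maximal cyclic subgroups shows that $u$ is conjugate in $H$ to $v^{\pm1}$ and that $u$ is itself conjugate into $E$. Writing $u=g^{-1}wg$ with $w\in\langle B\rangle$ and $v=h^{-1}w^{\pm1}h$ accordingly, and absorbing signs into $j$, yields conclusion (2): $x=g^{-1}w^ig$, $y=h^{-1}w^jh$ and $H=\langle g^{-1}wg,\,g^{-1}h\rangle$.

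The main obstacle is the structural bookkeeping in the last two steps --- proving $H\cap E=1$ and then controlling $\mathbb H$ tightly enough that it is either a segment or a bigon carrying one infinite-cyclic edge group. The amalgam structure is essential precisely here: in an abstract rank-two free group there are non-conjugate maximal-cyclic pairs that freely generate an infinite-index rank-two subgroup (for instance $st$ and $s^{-1}t$ in $\langle s,t\rangle$), so the conjugacy needed for (2) cannot come from the inclusion $\langle u,v\rangle\le H$ in isolation. It is forced only by the extra geometric input that $u,v$ are elliptic at adjacent vertices and that every surviving edge stabiliser is conjugate into $E=\langle B\rangle$; making this $2$-acylindrical analysis of $\mathbb H$ fully rigorous is where the real work lies.
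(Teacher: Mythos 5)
Your overall strategy (run Bass--Serre theory for the amalgam $F(A,B,C)=\langle A,B\rangle *_{\langle B\rangle}\langle B,C\rangle$) is a reasonable alternative to the paper's argument, but as written it contains a recurring invalid inference that two of your key steps rest on. You twice conclude that a subgroup of the rank-two group $H$ must itself have rank at most two: once when you declare a ``free product of rank three inside the rank-two group $H$'' to be impossible, and again when you assert that $\langle H_1,H_2\rangle\le H$ ``forces it to have rank two.'' A free group of rank two contains free subgroups of every countable rank (even of finite index, e.g.\ the kernel of $F(s,t)\to\Z/2$ has rank three), so neither conclusion follows from containment in $H$. The rank bound is only valid for a \emph{free factor} of $H$, or more generally for a vertex group of a splitting of $H$ with trivial edge groups, via Grushko. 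The statements you want are true and can be repaired this way: applying the Kurosh subgroup theorem to $H$ acting on the tree for the \emph{free} splitting $F=\langle A,B\rangle *\langle C\rangle$ exhibits $H\cap\langle A,B\rangle$ as a free factor of $H$, so if it had rank two then $H=H\cap\langle A,B\rangle$, contradicting $y\in H$; this simultaneously gives $H\cap\langle B\rangle=1$ and $\rk H_1=\rk H_2=1$. But that is a different argument from the one you wrote. A second, related gap: the claim that $\langle x,b\rangle$ and $\langle y\rangle$ generate their free product ``since they fix adjacent vertices'' is not a valid general principle when one of the two subgroups meets the edge group $E$ nontrivially (which is exactly the case in your contradiction argument, since $b\in E$); the normal form theorem only applies to alternating products whose syllables avoid $E$.

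The second half of the proof (the case $\langle u,v\rangle\ne H$ leading to conclusion (2)) is, by your own admission, only a plan: you have not shown that the quotient graph of groups has all vertex groups cyclic (you only discussed $H\cap M_1$ and $H\cap M_2$, not the other conjugates $H\cap M_i^{g}$), nor carried out the bookkeeping that forces $u$ and $v$ to be conjugate to powers of a common $w$ conjugate into $\langle B\rangle$. For comparison, the paper's proof avoids all of this by working directly with Stallings graphs: it represents $H$ by a core graph $\Lambda\immerses\Delta$ over the rose, observes that the loop reading $x$ misses all $C$-edges and the loop reading $y$ misses all $A$-edges, and decomposes $\Lambda=Q_1\cup Q_2$ with $\chi(Q_1)=\chi(Q_2)=0$; the two conclusions of the lemma correspond to $Q_1\cap Q_2$ being connected or not. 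The Euler-characteristic count on subgraphs of the core graph is precisely the correct substitute for the rank inequalities you tried to extract from mere containment in $H$, so I would encourage you either to switch to that argument or to systematically replace ``subgroup of $H$'' by ``free factor of $H$ via Kurosh'' throughout and then complete the graph-of-groups analysis in the second case.
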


\begin{proof}
Let $\Delta$ be a rose graph with one edge for each element in $A\sqcup B\sqcup C$ so that we may identify $\pi_1(\Delta)$ with $F(A, B, C)$. Then we may represent $\langle x, y\rangle$ and $H$ by graph immersions $\Gamma\immerses\Delta$ and $\Lambda\immerses\Delta$. By assumption, $\Gamma\immerses\Delta$ factors through $\Lambda\immerses\Delta$. Thus, there must be loops based at the same vertex in $\Lambda$ with labels $x$ and $y$, covering $\Lambda$. Since a path labelled by $x$ cannot traverse any $C$-edges and a path labelled by $y$ cannot traverse any $A$-edges, it follows that there is a decomposition $Q^{(1)} = Q_1\cup Q_2$ where $\chi(Q_1), \chi(Q_2) = 0$ and $Q_1$ only contains $A$-edges and $B$-edges and $Q_2$ only contains $B$-edges and $C$-edges. Moreover, the path labelled by $x$ is supported in $Q_1$ and the path labelled by $y$ is supported in $Q_2$. If $Q_1\cap Q_2$ is connected, (\ref{itm:first}) must hold. If $Q_1\cap Q_2$ is not connected, then (\ref{itm:second}) must hold.
\end{proof}

\begin{lemma}
\label{malnormal_condition_2}
Let $F(A, B, C)$ be a free group and let $x\in\langle A, B\rangle - \langle B\rangle$, $y\in \langle B, C\rangle - \langle B\rangle$ and $z\in \langle B\rangle - 1$. If $H<F(A, B, C)$ is a subgroup of rank two, containing $\langle xy, z\rangle$, then there is an element $u$ and a non zero integer $i$, such that $H = \langle xy, u\rangle$ and $z = u^i$.
\end{lemma}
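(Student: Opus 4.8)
The plan is to run a Stallings-graph argument in the spirit of the proof of Lemma~\ref{malnormal_condition_1}. Realise $F(A,B,C)$ as $\pi_1$ of a rose $\Delta$ with one petal per generator, represent $H$ by its core graph $\Lambda\immerses\Delta$ (so $\Lambda$ is connected with $\chi(\Lambda)=-1$), and represent $\langle xy,z\rangle$ by $\Gamma\immerses\Delta$ factoring as $\Gamma\immerses\Lambda$. This yields based loops $\ell_{xy}$ and $\ell_z$ at a common vertex $v_0\in\Lambda$. Writing the freely reduced form of $xy$ as $x'y'$, where $x'$ is a prefix of $x$ with no $C$-letters and $y'$ a suffix of $y$ with no $A$-letters, the loop $\ell_{xy}$ splits as an arc reading $x'$ (traversing only $A$- and $B$-edges) followed by an arc reading $y'$ (traversing only $B$- and $C$-edges); in particular every $A$-edge of $\ell_{xy}$ precedes every $C$-edge. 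The loop $\ell_z$ traverses only $B$-edges.

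First I would locate $u$. Let $\Lambda_B$ be the subgraph of $B$-edges and $\Lambda_B^0$ the component containing $v_0$; since a $B$-labelled loop at $v_0$ stays in $\Lambda_B^0$, we have $H\cap\langle B\rangle=\pi_1(\Lambda_B^0,v_0)$, which contains $z\neq 1$ and so has rank $\geq 1$. I claim it has rank exactly $1$. The cycle space of $\Lambda_B^0$ includes into $H_1(\Lambda)\isom\Z^2$; were it rank $2$, its image would be finite index, so any linear functional vanishing on it would vanish on all of $H_1(\Lambda)$. But the functionals reading the coefficient of a fixed $A$- or $C$-edge vanish on cycles of $\Lambda_B^0$, hence would vanish on the entire cycle space, forcing every $A$- and $C$-edge to be a bridge; collapsing the resulting trees would retract $\Lambda$ onto $\Lambda_B^0$ and give $H\leq\langle B\rangle$, contradicting $xy\notin\langle B\rangle$. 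Thus $H\cap\langle B\rangle=\langle u\rangle$ is infinite cyclic for some $u\in\langle B\rangle$, and $z=u^i$ for some $i\geq 1$ after orienting $u$.

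It remains to prove $H=\langle xy,u\rangle$. Since free factors of free groups are malnormal, hence root-closed, any common root of $xy$ and $u$ would lie in $\langle B\rangle$ (as it is a root of $u\in\langle B\rangle$) and would force $xy\in\langle B\rangle$; so $\langle xy,u\rangle$ has rank $2$. By the Nielsen--Schreier index formula it then suffices to show that $\langle xy,u\rangle$ has finite index in $H$, i.e. that the immersion of its core graph into $\Lambda$ is a covering; equivalently, that $\ell_{xy}$ and $\ell_u$ together traverse every edge of $\Lambda$ with the full local picture at each vertex. Here $\ell_u$ sweeps out the unique essential $B$-cycle, whose class together with that of $\ell_{xy}$ must span $H_1(\Lambda)\isom\Z^2$, so $\ell_{xy}$ has to account for the complementary cycle through the $A$- and $C$-edges.

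The main obstacle is this last step: a priori $\ell_{xy}$ could wrap the non-$B$ part of $\Lambda$ several times—for example reading a word like $\beta^2\alpha^{-1}$ on a theta graph—in which case $\langle xy,u\rangle$ would be a proper, infinite-index subgroup and the conclusion would fail. This is exactly where the decomposition $xy=x'y'$ is decisive: because all $A$-edges of $\ell_{xy}$ precede all of its $C$-edges, $\ell_{xy}$ cannot traverse an arc carrying both an $A$- and a $C$-edge more than once, so it crosses the ``$A/C$-interface'' of $\Lambda$ exactly once and wraps the complementary cycle precisely once. I would organise this into a short finite case analysis on the topological type of $\Lambda$ (a rose, a theta graph, or two circles joined by an arc), using $\mathrm{rank}(\Lambda_B^0)=1$ and this crossing-once property to verify in each case that $\ell_{xy}$ and $\ell_u$ cover $\Lambda$ and realise a basis, giving $H=\langle xy,u\rangle$ as required.
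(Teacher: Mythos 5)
Your proposal is correct and follows essentially the same route as the paper: both represent $H$ by a rank-two Stallings core graph $\Lambda$ and decompose it according to edge labels, using that $z$ is a $B$-word (so the $B$-part at the basepoint carries exactly one cycle, your $\langle u\rangle$, the paper's $Q_1$ with $\chi(Q_1)=0$) while the reduced form of $xy$ has all of its $A$-letters before all of its $C$-letters (so $\ell_{xy}$ makes exactly one excursion through the non-$B$ part, the paper's tree pieces $Q_2$ and $Q_3$). The only caveat is that your concluding case analysis is left as a sketch and its one-line summary is slightly imprecise --- in the spectacles configuration the loop crosses the connecting arc twice rather than once, and one must separately rule out a second rank-one $B$-component away from the basepoint, which happens because the separating arc would then have to carry both an $A$- and a $C$-edge and be crossed twice --- but the ordering argument you identify does dispose of every configuration, and the paper's own proof is no less terse at exactly this point.
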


\begin{proof}
Just as in the proof of Lemma \ref{malnormal_condition_1}, let $\Delta$ be a rose graph with one edge for each element in $A\sqcup B\sqcup C$ so that we may identify $\pi_1(\Delta)$ with $F(A, B, C)$. Then we may represent $\langle xy, z\rangle$ and $H$ by graph immersions $\Gamma\immerses\Delta$ and $\Lambda\immerses\Delta$. By assumption, $\Gamma\immerses\Delta$ factors through $\Lambda\immerses\Delta$. Thus, there must be loops based at the same vertex in $\Lambda$ with labels $xy$ and $z$, covering $\Lambda$. Since $z$ only traverses $B$-edges, it follows that there is a decomposition $Q^{(1)} = Q_1\cup Q_2\cup Q_3$ where $\chi(Q_1) = 0$, $Q_1$ only contains $B$-edges, $\chi(Q_2) = \chi(Q_3) = 1$, $Q_2$ only contains $A$-edges and $B$-edges and $Q_3$ only contains $B$-edges and $C$-edges. Moreover, $z$ is supported in $Q_1$, $x$ is supported in $Q_1\cup Q_2$ and $y$ is supported in $Q_1\cup Q_3$. Now the result follows.
\end{proof}

The \emph{fibre product} $\Gamma\times_{\Delta}\Lambda$ of two graph immersions $f:\Gamma\immerses \Delta$, $g:\Lambda\immerses \Delta$ is defined as the graph with vertices:
\[
V(\Gamma\times_{\Delta}\Lambda) = \{(v_{\Gamma}, v_{\Lambda})\in V(\Gamma)\times V(\Lambda) \mid f(v_{\Gamma}) = g(v_{\Lambda})\}
\]
and edges:
\[
E(\Gamma\times_{\Delta}\Lambda) = \{(e_{\Gamma}, e_{\Lambda})\in E(\Gamma)\times E(\Lambda) \mid f(e_{\Gamma}) = g(e_{\Lambda}), \text{ respecting orientations}\}
\]
As elucidated in \cite{sta_83}, there is a correspondence between the double cosets $\pi_1(\Lambda)h\pi_1(\Gamma)$ such that $\pi_1(\Gamma)\cap \pi_1(\Lambda)^h \neq 1$ and components of the core of the fibre product graph $\core(\Gamma\times_{\Delta}\Lambda)$ given by the $\pi_1$ functor.

\section{Exceptional intersection groups}

The interactions between Magnus subgroups of one-relator groups are well understood. The following is \cite[Theorem 2]{collins_04}.

\begin{theorem}
\label{exceptional_intersection}
Let $F(\Sigma)/\normal{w}$ be a one-relator group and suppose $\Sigma = A\sqcup B\sqcup C$. If $\langle A, B\rangle$ and $\langle B, C\rangle$ are Magnus subgroups, then one of the following holds:
\begin{enumerate}
\item $\langle A, B\rangle\cap \langle B, C\rangle = \langle B\rangle$,
\item $\langle A, B\rangle\cap \langle B, C\rangle = \langle B\rangle*\Z$.
\end{enumerate}
\end{theorem}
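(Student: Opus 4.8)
The plan is to induct on the length of a cyclically reduced conjugate of $w$ via the Magnus hierarchy, tracking the intersection $I = \langle A, B\rangle\cap\langle B, C\rangle$ at every stage. By the Freiheitssatz, $\langle A, B\rangle$, $\langle B, C\rangle$ and $\langle B\rangle$ are free; since $I$ is a subgroup of the free group $\langle A, B\rangle$ it is itself free, and it plainly contains $\langle B\rangle$. The goal is to show that $I$ has the form $\langle B\rangle * L$ with $L$ either trivial or infinite cyclic, equivalently that $\langle B\rangle$ is a free factor of $I$ of corank at most one. The base case is when $G$ is free, where $I = \langle B\rangle$ because disjoint subsets of a free basis generate subgroups meeting exactly in the subgroup generated by their common part.

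For the inductive step, I would realise $G$ as an HNN-extension $G = H*_{\psi}$ with stable letter $t$ and finitely generated free edge groups, where $H$ is a one-relator group whose relator is strictly shorter. When some generator already has exponent sum zero in $w$ we take it as $t$; otherwise we first replace $G$ by the Moldavanskii HNN-extension obtained by adjoining a root, which embeds $G$ and in which a stable letter has exponent sum zero, and we arrange the partition $A\sqcup B\sqcup C$ to be compatible with the associated rewriting. The edge groups of the splitting are themselves Magnus subgroups of $H$, and the Magnus subgroups $\langle A, B\rangle$, $\langle B, C\rangle$ of $G$ either conjugate into $H$ or interact with the splitting through $t$. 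The case analysis is governed by whether $t$ lies in $A$, in $B$, or in $C$.

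Given $g\in I$, I would put it in Britton normal form for the HNN-extension. Because $g$ is simultaneously represented by a word avoiding $C$-letters and by a word avoiding $A$-letters, the pattern of $t$-syllables is heavily constrained: either $g$ conjugates into the vertex group $H$, where the two Magnus subgroups restrict to Magnus subgroups of $H$ whose common part is again of the controlled form and the inductive hypothesis applies, or the $t$-syllables survive and Britton's lemma pins $g$ down to the subgroup generated by $\langle B\rangle$ together with a single element arising from the edge identification. This identification is precisely the mechanism producing the exceptional $\Z$: it occurs when the relator links an $\langle A, B\rangle$-element to a $\langle B, C\rangle$-element, as in the torus knot relator $a^p b^{-q}$, where $a^p = b^q$ generates the extra factor.

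The main obstacle is to show that at most one such extra generator can arise and that it splits off as a free factor, that is, to rule out $\rk(I)\geq \rk(\langle B\rangle) + 2$. Two independent exceptional elements would force a coincidence of overlapping powers or an element conjugate to its own inverse inside a free group, and the Lyndon-type cancellation results of Lemmas \ref{no_double_subword_1} and \ref{no_double_subword_2} are exactly what is needed to extract the resulting contradiction; a malnormality argument for the edge groups then shows that the surviving element generates an infinite cyclic free factor complementary to $\langle B\rangle$. A secondary technical difficulty is carrying the partition $A\sqcup B\sqcup C$ faithfully through the Moldavanskii rewriting in the nonzero exponent-sum case, so that the edge and vertex Magnus subgroups of $H$ continue to have common part contained in a conjugate of $\langle B\rangle$ and the induction closes.
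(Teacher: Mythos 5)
The first thing to note is that the paper contains no proof of this statement: Theorem \ref{exceptional_intersection} is imported verbatim as \cite[Theorem 2]{collins_04}, so there is no internal argument to compare yours against. Your outline does follow the same broad strategy as Collins's published proof --- induction on relator length through the Magnus--Moldavanskii hierarchy, Britton normal forms in the resulting HNN-extension, and a case division according to where the stable letter $t$ sits relative to $A$, $B$, $C$ --- so the skeleton is the historically correct one.

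As a proof, however, the proposal has a genuine gap precisely at what you call ``the main obstacle''. The entire content of the theorem is that $\langle B\rangle$ has corank at most one in the intersection and that the extra generator splits off as a free factor; your resolution of this is the single assertion that two independent exceptional elements ``would force a coincidence of overlapping powers or an element conjugate to its own inverse'' and that Lemmas \ref{no_double_subword_1} and \ref{no_double_subword_2} ``are exactly what is needed''. That is a claim, not an argument: those lemmas are elementary cancellation facts about subwords of powers of a non-proper-power, and nothing in the proposal exhibits how a hypothetical element of $\langle A, B\rangle\cap\langle B, C\rangle$ outside $\langle B\rangle\ast\Z$ would produce the forbidden configuration. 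Similarly, the dichotomy ``either $g$ conjugates into $H$ or Britton's lemma pins $g$ down to $\langle B\rangle$ together with a single edge element'' is not established, and it cannot follow from Britton's lemma alone: when $t\in B$ both Magnus subgroups contain the stable letter, elements of the intersection can have arbitrarily many surviving $t$-syllables (powers of $t$ already do), and one must instead analyse how the rewritten subgroups meet the edge groups of the splitting and compare the exceptional elements produced by several distinct applications of the inductive hypothesis. That comparison, together with carrying the partition $A\sqcup B\sqcup C$ faithfully through the Moldavanskii root-adjunction when no generator has zero exponent sum, is where essentially all of Collins's work lies. As written, the proposal is an accurate road map to the literature proof rather than a proof.
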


We say the Magnus subgroups $\langle A, B\rangle$ and $\langle B, C\rangle$ have \emph{exceptional intersection} if the latter situation occurs.

\begin{definition}
A one-relator presentation $\langle \Sigma \mid w\rangle$ is an \emph{exceptional intersection presentation} if there are disjoint subsets $A, B, C\subset \Sigma$ such that $\langle A, B\rangle$ and $\langle B, C\rangle$ have exceptional intersection. A one-relator group $G$ is an \emph{exceptional intersection group} if it has an exceptional intersection presentation.
\end{definition}

The following result appears as \cite[Corollary 2.3]{collins_04}.

\begin{corollary}
\label{torsion_free_exceptional}
Exceptional intersection groups are torsion-free.
\end{corollary}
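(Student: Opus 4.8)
The plan is to reduce torsion-freeness to a statement purely about the relator, and then to rule out proper powers using the combinatorial shape forced by an exceptional intersection. Recall the Karrass--Magnus--Solitar characterisation of torsion in one-relator groups (see \cite{lyn_00}): if $w$ is cyclically reduced, then $F(\Sigma)/\normal{w}$ has torsion if and only if $w$ is a proper power, and in that case every element of finite order is conjugate to a power of the root $v$, where $w = v^n$ with $n\geq 2$. Thus it suffices to show that if $\langle A, B\rangle$ and $\langle B, C\rangle$ have exceptional intersection, then the cyclic reduction of $w$ is not a proper power.

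Suppose for contradiction that $w = v^n$ with $n\geq 2$. First I would record the elementary observation that, since $\langle A, B\rangle$ and $\langle B, C\rangle$ are genuine Magnus subgroups of the presentation, the cyclic reduction of $w$ mentions at least one generator of $A$ and at least one generator of $C$. Next I would extract a concrete witness of exceptionality: by Theorem \ref{exceptional_intersection} the intersection equals $\langle B\rangle * \Z$, so there is an element $g\in \langle A, B\rangle\cap \langle B, C\rangle$ with $g\notin \langle B\rangle$. Writing $g$ simultaneously as a freely reduced word $u$ on $A\sqcup B$ that mentions $A$ and as a freely reduced word $u'$ on $B\sqcup C$ that mentions $C$, the identity $g = g$ in the quotient yields a nontrivial relation $u\,u'^{-1}\in \normal{w}$, with $u\,u'^{-1}$ conjugate into neither Magnus subgroup. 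The heart of the argument is to promote this coincidence, via the standard one-relator cancellation analysis underlying the exceptional case of Theorem \ref{exceptional_intersection} (as in Collins' original treatment \cite{collins_04}), to an explicit cyclic factorisation: a cyclic conjugate of $w$ equals $w_1 w_2$ with $w_1\in \langle A, B\rangle$ mentioning $A$ and $w_2\in \langle B, C\rangle$ mentioning $C$. Equivalently, reading $w$ as a cyclic word, all occurrences of $A\sqcup A^{-1}$ lie on one arc and all occurrences of $C\sqcup C^{-1}$ lie on the complementary arc.

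With this cyclic separation in hand, the contradiction is purely combinatorial. If the root $v$ involved both an $A$-letter and a $C$-letter, then in the cyclic word $w = v^n$ with $n\geq 2$ the occurrences of $A$ and of $C$ would interleave at least twice around the circle, so they could not be partitioned into a single $A$-arc and a single complementary $C$-arc. Hence $v$ mentions no generator of $A$ or no generator of $C$, and so $w = v^n$ fails to mention one of $A$ or $C$, contradicting the first observation. Therefore $w$ is not a proper power and $G$ is torsion-free.

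I expect the main obstacle to be the middle step: passing from the mere existence of the exceptional element $g$ to the explicit cyclic factorisation $w = w_1 w_2$ respecting the partition $A\sqcup B\sqcup C$. This requires the careful subword bookkeeping for one-relator coincidences, and one must ensure the witness $g$ can be chosen so that $u$ and $u'$ are freely reduced and genuinely mention $A$ and $C$ respectively. The proper-power lemmas of Section~2, in particular Lemma \ref{no_double_subword_2}, are exactly the tools for controlling the resulting periodicity equations; once the cyclic separation is established, the argument ruling out $w = v^n$ is immediate.
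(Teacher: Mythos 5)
Your opening reduction is fine: by the Karrass--Magnus--Solitar torsion theorem it suffices to show the relator is not a proper power, and your observation that the cyclic reduction of $w$ must mention a generator of $A$ and a generator of $C$ is correct. (For what it is worth, the paper offers no proof at all here -- it simply cites \cite[Corollary 2.3]{collins_04}.) The gap is exactly where you flagged it, and it is not a bookkeeping issue: the cyclic separation you want to extract from exceptionality is \emph{false}. An exceptional intersection does not force a cyclic conjugate of $w$ to factor as $w_1w_2$ with every $A^{\pm1}$-letter on one arc and every $C^{\pm1}$-letter on the complementary arc. What Collins' and Howie's analysis actually produces is Theorem \ref{exceptional_intersection_improved}: $w$ is conjugate to $\iota(r)$ for some word $r\in F(a,c)$ with $\iota(a)$, $\iota(c)$ landing in the two Magnus subgroups -- and $r$ may alternate between its two letters arbitrarily many times. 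Concretely, take $\Sigma=\{a,c\}$, $A=\{a\}$, $B=\emptyset$, $C=\{c\}$ and $w=\pr_{5/6}(a^2,c^{-2})=a^4c^{-2}a^2c^{-2}a^2c^{-2}a^2c^{-2}a^2c^{-2}$: as recorded in the example closing Section~3, the relation $a^{12}=c^{-10}$ holds in the quotient, so this presentation has an exceptional intersection, yet the $a$'s and $c$'s interleave five times around the cyclic word. Since your final combinatorial step uses nothing except the separation, the argument collapses: a relator of the form $r(x,y)$ with $r$ alternating is entirely compatible, as far as your interleaving count can see, with being a proper power.

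Any repair must feed the torsion hypothesis into the structural analysis rather than bolting it on afterwards, which is what Collins does. If $w=v^n$ with $n\geq 2$, Newman's Spelling Theorem guarantees that every nontrivial freely reduced element of $\normal{w}$ contains a subword comprising more than the fraction $(n-1)/n$ of a cyclic conjugate of $w^{\pm1}$, hence a full cyclic conjugate of $v^{\pm1}$ together with some overlap. A witness of exceptionality gives a nontrivial element of $\normal{w}$ whose reduced form is a product of a word in $A\sqcup B$ by a word in $B\sqcup C$, in which no $C^{\pm1}$-letter can precede an $A^{\pm1}$-letter; since $v$ must mention both an $A$-letter and a $C$-letter by your first observation, the long periodic subword violates this order constraint (the case $n=2$ needs a little extra care with the overlap). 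That spelling-theorem input is the missing idea; the lemmas of Section~2 that you invoke control periodicity in the free group and cannot substitute for it, because the separation you posit is simply not a consequence of exceptionality.
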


\subsection{Primitive exceptional intersection groups}
\label{pei_section}

In this section, we introduce two families of one-relator groups called primitive exceptional intersection groups. Our aim will be to show that they are precisely the exceptional intersection groups that are $2$-free. 

Before defining our groups, we first need to discuss certain primitive elements of the free group of rank two. Recall that an element of a free group $w\in F(\Sigma)$ is \emph{primitive} if $w$ forms part of a free basis for $F(\Sigma)$. Otherwise, $w$ is \emph{imprimitive}. The primitive elements of interest to us are known as Christoffel words and were first introduced in \cite{christoffel_73}. They are parametrised by a rational slope $p/q\in\Q_{>0}$. Let $\Gamma\subset \R^2$ denote the Cayley graph for $\Z^2$ on the generating set $a = (1, 0)$, $b = (0, 1)$. Let $L\subset \R^2$ be the line segment beginning at the origin and ending at the vertex $(q, p)$. Now let $P\subset \Gamma$ be the shortest length edge-path connecting the endpoints of $L$, remaining below $L$ and such that there are no integral points contained in the region enclosed by $L\cup P$. See Figure \ref{primitive_grid} for an example. The word in $a$ and $b$ traced out by $P$ is denoted by:
\[
\pr_{p/q}(a, b)~.
\]
By \cite[Theorem 1.2]{osborne_81}, every primitive element of $F(a, b)$ is conjugate into the set
\[
\left\{a^{\pm1}, b^{\pm1}, \pr_{p/q}\left(a^{\pm1}, b^{\pm1}\right) \Bigm\vert \frac{p}{q}\in \Q_{> 0}\right\}~.
\]

\begin{figure}
\centering
\includegraphics[scale=2]{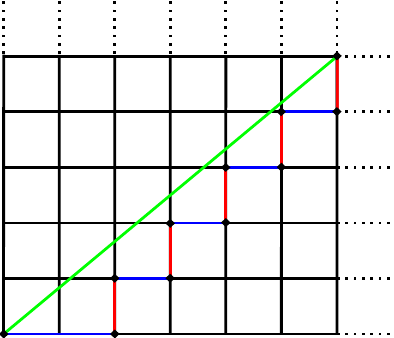}
\caption{$L$ is in green with slope $5/6$. The $a$ edges are in blue and the $b$ edges are in red, so $\pr_{5/6}(a, b) = a^2babababab$.}
\label{primitive_grid}
\end{figure}

Now consider the free group $F(A, B, C)$, freely generated by disjoint sets $A, B, C$. Let $p/q\in \Q_{>0}$ and let:
\begin{align*}
x &\in \langle A, B\rangle - \langle B\rangle~,\\
y &\in \langle B, C\rangle - \langle B\rangle~.
\end{align*}
Then we call $\pr_{p/q}(x, y)$ a \emph{primitive exceptional intersection word of the first type} if the following hold:
\begin{enumerate}
\item $\{\langle x\rangle, \langle y\rangle\}$ is a malnormal family (that is, $\langle x\rangle$ and $\langle y\rangle$ are malnormal and no conjugate of $\langle x\rangle$ intersects $\langle y\rangle$ non-trivially),
\item if $p/q = 1$, then there is no $a\in \langle A, B\rangle - \langle B\rangle$, $c\in \langle B, C\rangle - \langle B\rangle$ such that $\pr_{1}(x, y) = \pr_{1}(a, c)$ and $\{\langle a\rangle, \langle c\rangle\}$ is not a malnormal family.
\end{enumerate}
By definition, we see that $\langle x, y\rangle$ is an infinite cyclic subgroup of $G = F(A, B, C)/\normal{w}$. We find that $G$ has the following exceptional intersection:
\[
\langle A, B\rangle \cap \langle B, C\rangle =_G \langle B\rangle *\langle x^q\rangle = \langle B\rangle*\langle y^{-p}\rangle~.
\]
The following example demonstrates why we require the second condition in the definition.

\begin{example}
\label{not_first_type}
Consider the word $\pr_{1}(a^2b^{-1}, bc^2)\in F(a, b, c)$. Although the subgroups $\{\langle a^2b^{-1}\rangle, \langle bc^2\rangle\}$ form a malnormal family, we have 
\[
\pr_{1}(a^2b^{-1}, bc^2) = \pr_1(a^2, c^2) = a^2c^2~,
\]
where $\{\langle a^2\rangle, \langle c^2\rangle\}$ is not a malnormal family. Hence, $\pr_1(a^2b^{-1}, bc^2)$ is not a primitive exceptional intersection word of the first type.
\end{example}

Now let $z\in \langle B\rangle - 1$. We call $\pr_{p/q}(xy, z)$ a \emph{primitive exceptional intersection word of the second type} if the following hold:
\begin{enumerate}
\item $\langle z\rangle$ is malnormal,
\item if $p/q = k\in \N$, then there is no $a\in \langle A, B\rangle - \langle B\rangle$, $c\in \langle B, C\rangle - \langle B\rangle$ such that $\pr_{k}(xy, z) = \pr_{1}(a, c)$ and $\{\langle a\rangle, \langle c\rangle\}$ is not a malnormal family.
\end{enumerate}
By definition, we see that $\langle xy, z\rangle$ is an infinite cyclic subgroup of $G = F(A, B, C)/\normal{w}$ and so $(xy)^{-1}z(xy) =_G z$. We find that $G$ has the following exceptional intersection:
\[
\langle A, B\rangle \cap \langle B, C\rangle =_G \langle B\rangle *\langle x^{-1}zx\rangle = \langle B\rangle*\langle yzy^{-1}\rangle~.
\]

A word $w\in F(A, B, C)$ is a \emph{primitive exceptional intersection word} if $w$ is a primitive exceptional intersection word of the first or second type.

\begin{definition}
\label{pie_group}
A group $G$ is a \emph{primitive exceptional intersection group} if $G\isom F(\Sigma)/\normal{w}$ where $w$ is a primitive exceptional intersection word.
\end{definition}

\begin{example}
\label{not_second_type}
Consider the word $\pr_{k}(a^2b^2c^2, b) = a^2b^2c^2b^k\in F(a, b, c)$. Since this is equal to $\pr_{1}(a^2b^{2-k}, b^{-k}c^2b^k)$ where $\langle b^{-k}c^2b^k\rangle$ is not malnormal, it is not a primitive exceptional intersection word of the second type. However, it is also equal to the word $\pr_{2}(a^2b^{2-k}, b^{-k}cb^k)$ (or $\pr_{1/2}(a, b^2c^2b^2)$ when $k = 2$) which is a primitive exceptional intersection word of the first type. Thus
\[
G = F(a, b, c)/\normal{a^2b^2c^2b^k}
\]
is a primitive exceptional intersection group for all $k$.
\end{example}

The proof of the following theorem is rather involved and will take up the remainder of this section.

\begin{theorem}
\label{primitive_negative_immersions}
Primitive exceptional intersection groups are $2$-free.
\end{theorem}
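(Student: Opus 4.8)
The plan is to prove the statement in the equivalent form that the primitivity rank of $w$ is at least three. Recall that the \emph{primitivity rank} $\pi(w)$ is the smallest rank of a subgroup $K\leq F(A,B,C)$ containing $w$ in which $w$ is imprimitive. By \cite{louder_21} (see also \cite{me_22}), every subgroup of $F(\Sigma)/\normal{w}$ of rank strictly less than $\pi(w)$ is free; in particular, if $\pi(w)\geq 3$ then $F(\Sigma)/\normal{w}$ is $2$-free. Thus it suffices to show that $w$ is not a proper power (so that $\pi(w)\geq 2$) and that $w$ is primitive in every rank-two subgroup of $F(A,B,C)$ that contains it.

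First I would record that, for a word of the first type, malnormality of the family $\{\langle x\rangle,\langle y\rangle\}$ forces $\langle x,y\rangle\isom\langle x\rangle*\langle y\rangle$ to be free of rank two; since Christoffel words are primitive in the free group of rank two \cite{osborne_81}, the element $w=\pr_{p/q}(x,y)$ is primitive, and hence not a proper power, in $\langle x,y\rangle$. A short argument with malnormality then promotes this to: $w$ is not a proper power in $F(A,B,C)$. The same reasoning applies to a word $w=\pr_{p/q}(xy,z)$ of the second type, using that $\langle z\rangle$ is malnormal and $xy\notin\langle B\rangle$ to see that $\langle xy,z\rangle$ is free of rank two.

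The heart of the argument is then to suppose, for contradiction, that some rank-two subgroup $K\leq F(A,B,C)$ contains $w$ as an imprimitive element, and to derive that $K$ must in fact contain a conjugate of $\langle x,y\rangle$ (first type) or of $\langle xy,z\rangle$ (second type). Granting this, after conjugating I may assume $\langle x,y\rangle\leq K=:H$ and apply Lemma~\ref{malnormal_condition_1}. Its conclusion~(\ref{itm:second}) would exhibit conjugates of $\langle x\rangle$ and $\langle y\rangle$ inside a common cyclic group, so that a conjugate of $\langle x\rangle$ meets $\langle y\rangle$ nontrivially, contradicting the malnormal family hypothesis; its conclusion~(\ref{itm:first}) with an exponent of absolute value at least two would make $x$ or $y$ a proper power, contradicting malnormality of $\langle x\rangle$ or $\langle y\rangle$. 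The only surviving possibility is conclusion~(\ref{itm:first}) with unit exponents, which gives $H=\langle x,y\rangle$ and hence $w$ primitive in $H$, contradicting imprimitivity. The second type is handled identically using Lemma~\ref{malnormal_condition_2}, where malnormality of $\langle z\rangle$ forces $z=u^{\pm 1}$ and hence $H=\langle xy,z\rangle$.

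The main obstacle is the bridge invoked above: that a rank-two subgroup containing $w$ imprimitively must already contain the syllables $x,y$ (respectively $xy,z$) out of which $w$ is built. This is a statement purely about free groups, and it is here that the explicit combinatorics of Christoffel words enters, together with the Stallings-graph and fibre-product technology of Section~2 and the non-repetition results Lemma~\ref{no_double_subword_1} and Lemma~\ref{no_double_subword_2}; one tracks the alternating syllable structure of $\pr_{p/q}(x,y)$ through an immersed representative of $K$ and shows that its factors cannot be absorbed. I expect the most delicate point to be the degenerate slopes $p/q=1$ for the first type and $p/q=k\in\N$ for the second, where the Christoffel word collapses and the factorisation of $w$ into its syllables is no longer unique (cf. Examples~\ref{not_first_type} and~\ref{not_second_type}). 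Precisely these cases are excluded by the second clause in the definition of a primitive exceptional intersection word, which must be used to rule out an alternative factorisation producing a non-malnormal, and hence spurious, rank-two witness.
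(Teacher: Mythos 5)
Your reduction to the primitivity rank and your treatment of the case where a rank-two subgroup actually contains a conjugate of $\langle x,y\rangle$ (resp.\ $\langle xy,z\rangle$) match the paper: this is exactly how Lemma~\ref{rank_1_pullback_2} disposes of the second alternative of Kent's theorem, using the $w$-subgroup maximality property together with Lemmas~\ref{malnormal_condition_1} and~\ref{malnormal_condition_2}. However, the ``bridge'' on which your whole argument rests --- that any rank-two subgroup $K$ containing $w$ imprimitively must contain a conjugate of $\langle x,y\rangle$ --- is not established, and as stated it is false. Example~\ref{not_first_type} already shows the failure mode: $\pr_1(a^2b^{-1},bc^2)=a^2c^2$ lies imprimitively in the rank-two subgroup $\langle a,c\rangle$, which contains no conjugate of $\langle a^2b^{-1},bc^2\rangle$. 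The second clause in the definition excludes \emph{that particular} word, but it does not convert your bridge into a theorem: even for genuine primitive exceptional intersection words, the paper never proves that a rank-two witness must absorb $\langle x,y\rangle$. What it proves instead is that no rank-two $w$-subgroup exists at all, and in several branches of the argument the contradiction obtained is not ``$K\supseteq\langle x,y\rangle^g$, hence $w$ primitive in $K$'' but rather ``there would exist a factorisation $w=(ab^{-1})(bc)$ with $\{\langle a\rangle,\langle c\rangle\}$ not a malnormal family,'' which is forbidden by the definition. So the dichotomy you need is not ``$K$ contains the syllables or it doesn't exist''; it is a direct elimination of the complementary case.

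That complementary case is the entire substance of the paper's proof, and your proposal compresses it into one sentence (``one tracks the alternating syllable structure\dots and shows that its factors cannot be absorbed''). Concretely, the paper first shows (Lemma~\ref{rank_1_pullback_2}, first alternative of Kent's theorem) that the core of the fibre product $\Gamma\times_\Delta\Lambda$ contains a circle mapping onto $\Lambda$ by $\lambda$; then (Lemma~\ref{crossing_vertices}) that any path in $\Lambda$ labelled by the syllable words $u_n$, $v_n$ must cross a vertex of valence at least three, which uses the non-repetition Lemmas~\ref{no_double_subword_1} and~\ref{no_double_subword_2} and the malnormality hypotheses; and finally it runs a case analysis over the three homeomorphism types of a core graph with $\chi=-1$ (rose, theta, spectacles), in each case either forcing $\lambda$ to be primitive in $\pi_1(\Lambda)$ or producing one of the forbidden alternative factorisations. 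None of these steps is routine, and the degenerate-slope phenomenon you correctly flag is resolved inside that case analysis, not by the containment claim. As it stands, your proposal identifies the right tools but leaves the actual proof unwritten and organised around an intermediate claim that cannot be proved.
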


Before proceeding with the proof, we mention some important definitions and a result from \cite{louder_21}. Define the \emph{primitivity rank} of an element $w\in F(\Sigma)$ as the following quantity:
\[
\pi(w) = \min{\{\rk(K) \mid w\in K<F, \text{ $w$ not primitive in $K$}\}}\in \N\cup \{\infty\}.
\]
Given an element $w\in F(\Sigma)$ such that $\pi(w)<\infty$, a subgroup $K<F(\Sigma)$ is a \emph{$w$-subgroup} if the following hold:
\begin{enumerate}
\item $w\in K$ and $w$ is not primitive in $K$,
\item $\rk(K) = \pi(w)$,
\item if $K<K'$, then $\rk(K)<\rk(K')$.
\end{enumerate}

In \cite{louder_21}, Louder and Wilton connect the primitivity rank $\pi(w)$ with subgroup properties of the one-relator group $F(\Sigma)/\normal{w}$. The result we shall need is the following, appearing as \cite[Theorem 1.5]{louder_21}.

\begin{theorem}
\label{lw_2_free}
A one-relator group $F(\Sigma)/\normal{w}$ is $2$-free if and only if $\pi(w)>2$. 
\end{theorem}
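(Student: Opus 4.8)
The plan is to prove the two implications separately, treating the regimes $\pi(w)\le 2$ and $\pi(w)>2$ in turn; throughout I may assume $w$ is cyclically reduced.

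\textbf{The constructive direction} ($\pi(w)\le 2\Rightarrow G$ is not $2$-free). Here I would exhibit an explicit non-free subgroup of rank at most two. If $\pi(w)=1$, then $w=u^n$ is a proper power with $n\ge 2$, and by the Magnus--Karrass--Solitar torsion theorem the image of $u$ generates a cyclic subgroup of order $n$ in $G$; this is a non-free subgroup generated by a single element, so $G$ is certainly not $2$-free. If $\pi(w)=2$, I would fix a $w$-subgroup $K$, so that $\rk(K)=2$, $w\in K$ is imprimitive in $K$, and (since $\pi(w)\ne 1$) $w$ is not a proper power. The target is the image $\bar K\le G$. Since $w\in K\cap\normal{w}$, this intersection is non-trivial, and the task reduces to identifying $\bar K$. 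The crucial point is the equality $K\cap\normal{w}=\normal{w}_K$ (normal closure taken inside $K$), which I would derive from the isolation properties of $w$-subgroups; granting it, $\bar K\isom K/\normal{w}_K$. I would then invoke the classical fact that a two-generator one-relator group is free if and only if its relator is primitive: as $w$ is imprimitive in $K\isom F(a,b)$, the group $K/\normal{w}_K$ is non-free, whence $\bar K$ is a non-free rank-two subgroup and $G$ is not $2$-free.

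\textbf{The deep direction} ($\pi(w)>2\Rightarrow G$ is $2$-free). I would argue the contrapositive: a non-free two-generator subgroup forces $\pi(w)\le 2$. Given such $H=\langle\bar a,\bar b\rangle\le G$, lift it to $K=\langle a,b\rangle\le F$, so that $H=\bar K=K/(K\cap\normal{w})$ with $K$ free of rank at most two. Non-freeness of $H$ means $K\cap\normal{w}$ is a non-trivial normal subgroup of $K$ detected by conjugates of the single relator $w$. The strategy is to run the negative-immersions machinery of \cite{louder_21}: represent $H$ by a compact one-relator tower, that is, an admissible $2$-complex built over a rose for $K$ by attaching $2$-cells along the $F$-conjugates of $w$ lying in $K$, and analyse it via Stallings fibre products to control how distinct conjugates of $w$ overlap inside $K$. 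The Euler-characteristic inequality underlying negative immersions then shows that, unless $w$ is primitive in every rank-$\le 2$ subgroup containing it, these overlaps assemble into a subgroup $K_0\le F$ of rank at most two containing a conjugate of $w$ in which $w$ is imprimitive. Such a $K_0$ witnesses $\pi(w)\le 2$, the desired contradiction.

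\textbf{Main obstacle.} The hard part is this reverse direction. The constructive direction merely assembles standard facts (the torsion theorem, the primitivity-versus-freeness dichotomy for two-generator one-relator groups, and the embedding $K\cap\normal{w}=\normal{w}_K$), whereas showing that \emph{every} rank-$\le 2$ subgroup is free when $\pi(w)>2$ requires controlling all two-generator subgroups simultaneously. The subtlety is that $K\cap\normal{w}$ need not equal $\normal{w}_K$ in general: many $F$-conjugates of $w$ can land in a given $K$, and one must rule out that their interaction produces a non-free quotient without being visible inside a rank-$\le 2$ subgroup of $F$. I expect the heart of the argument, and the place where the negative-immersion counting does the real work, to be the extraction of a single rank-$\le 2$ imprimitivity witness from the combinatorics of these overlapping conjugates.
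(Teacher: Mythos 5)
A preliminary remark: the paper does not prove this statement at all --- it is imported verbatim as \cite[Theorem 1.5]{louder_21}, so the only meaningful comparison is with Louder--Wilton's proof. Measured against that, your outline correctly reproduces its overall shape: for $\pi(w)=1$ one gets torsion via Karrass--Magnus--Solitar; for $\pi(w)=2$ one exhibits the image of a $w$-subgroup as a non-free two-generator subgroup; and the direction $\pi(w)>2\Rightarrow$ $2$-free is exactly the negative-immersions theorem. But as a proof the proposal has genuine gaps at precisely the two steps carrying all the weight. First, the ``crucial point'' $K\cap\normal{w}=\normal{w}_K$ (equivalently, injectivity of $K/\normal{w}_K\to F(\Sigma)/\normal{w}$) does not follow formally from the isolation clause in the definition of a $w$-subgroup; it is itself one of the substantial theorems of \cite{louder_21} --- indeed it is the fact implicitly used in this paper when \cite[Corollary 1.10]{louder_21} is invoked in the proof of Theorem \ref{exceptional_characterisation} to treat $w$-subgroups as subgroups of $G$. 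Without it, your $\bar K$ is only a quotient of the non-free one-relator group $K/\normal{w}_K$, and a quotient of a non-free two-generator group can perfectly well be free (e.g.\ $\Z$), so the easy direction is incomplete exactly where you flagged it.

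Second, and more seriously, the hard direction is a black-box appeal: the sentence beginning ``The Euler-characteristic inequality underlying negative immersions then shows\ldots'' is not a step one can take --- it \emph{is} the theorem. Establishing that $\pi(w)>2$ forces negative immersions of the presentation complex, and that negative immersions yield freeness of all two-generator subgroups, occupies the bulk of Louder--Wilton's paper and rests on a delicate analysis of branched maps and irreducible pullbacks, not on a routine Stallings fibre-product and $\chi$ count; in particular the extraction of a single rank-$\le 2$ imprimitivity witness $K_0$ from overlapping conjugates of $w$, which you correctly identify as the heart of the matter, is exactly what is never sketched. So the proposal is an accurate road map of the known proof rather than a proof: it names the right intermediate statements but supplies independent arguments for neither, and both are deep results rather than formal consequences of the definitions.
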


The idea of the proof of Theorem \ref{primitive_negative_immersions} will then be to show that if $w\in F(\Sigma)$ is a primitive exceptional intersection word, then there cannot be any two-generator $w$-subgroups.

\begin{proof}[Proof of Theorem \ref{primitive_negative_immersions}]
If $G$ is a primitive exceptional intersection group, then there is a free group $F(A, B, C)$, a rational number $p/q\in \Q_{>0}$, and elements:
\begin{align*}
x &\in \langle A, B\rangle - \langle B\rangle~,\\
y &\in \langle B, C\rangle - \langle B\rangle~,\\
z &\in \langle B\rangle - 1~,
\end{align*}
such that $G\isom F(A, B, C)/\normal{w}$ where one of the following holds:
\begin{enumerate}
\item $w = \pr_{p/q}(x, y)$ is a primitive exceptional intersection word of the first type,
\item $w = \pr_{p/q}(xy, z)$ is a primitive exceptional intersection word of the second type.
\end{enumerate}
Let us assume for sake of contradiction that $G$ is not $2$-free. Then by Corollary \ref{torsion_free_exceptional} and Theorem \ref{lw_2_free}, we have that $\pi(w) = 2$. Thus, there is some $w$-subgroup $K<F(A, B, C)$ such that $\rk(K) = 2$. We will handle the two cases at the same time as they are very similar. 

Let $\Delta$ be a rose graph with one edge for each element in $A\sqcup B\sqcup C$. The edges of $\Delta$ can then be partitioned into $A$-edges, $B$-edges and $C$-edges. Denote by $\omega:S^1\immerses \Delta$ the cycle representing $w$. There is an immersion of core graphs $\Lambda\immerses \Delta$ representing the conjugacy class of $K$ and such that there is a lift $\lambda:S^1\immerses \Lambda$ of $\omega$. Now let $\Gamma\immerses \Delta$ be the graph immersion of core graphs representing the conjugacy class of $\langle x, y\rangle<\pi_1(\Delta)$ (or $\langle xy, z\rangle$ if we are in the second case). Then there is a lift $\gamma:S^1\immerses \Gamma$ of $\omega$.

\begin{lemma}
\label{rank_1_pullback_2}
There is some connected component $\Theta\subset \core(\Gamma\times_{\Delta}\Lambda)$ such that $\Theta \isom S^1$ and $\lambda = p_{\Lambda}\mid\Theta$.
\end{lemma}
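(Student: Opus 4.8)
The plan is to realise $w$ simultaneously as a loop in $\Gamma$ and in $\Lambda$ and to lift the common loop into the fibre product. Since $\gamma\colon S^1\immerses\Gamma$ and $\lambda\colon S^1\immerses\Lambda$ are both lifts of the single cycle $\omega\colon S^1\immerses\Delta$, the pair $\theta=(\gamma,\lambda)$ defines a map $S^1\to\Gamma\times_{\Delta}\Lambda$ whose image lands in the fibre product (the two factors have the same image $\omega$ in $\Delta$) and which is itself an immersion, because its composite with $p_{\Gamma}$ is the immersion $\gamma$. In particular $\theta$ is an immersed cycle, so its image lies in $\core(\Gamma\times_{\Delta}\Lambda)$; let $\Theta$ be the component containing it. The identity $p_{\Lambda}\circ\theta=\lambda$ holds by the very definition of the projection, so once we know that $\Theta\isom S^1$ and that $\theta$ identifies $S^1$ with $\Theta$, the equality $\lambda=p_{\Lambda}\mid\Theta$ comes for free. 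Thus the whole statement reduces to proving $\Theta\isom S^1$.

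I would then record two reductions. First, a core graph is homeomorphic to $S^1$ as soon as its fundamental group is infinite cyclic: every vertex of a core graph has valence at least two, and $\rk\pi_1=1$ forces $E=V$, so every vertex has valence exactly two and the component is a single cycle. Since $\Theta$ carries the loop $\theta$, its fundamental group is nontrivial, so it suffices to show $\pi_1(\Theta)$ is cyclic. Second, by the Stallings double-coset correspondence recalled above, $\pi_1(\Theta)$ is conjugate to the intersection $\pi_1(\Gamma)\cap\pi_1(\Lambda)^{h}$ attached to the double coset carrying $\theta$, and this intersection contains $w$. As $w$ is not a proper power (otherwise $G$ would have torsion, contradicting Corollary \ref{torsion_free_exceptional}), once $\pi_1(\Theta)$ is known to be cyclic it must be generated by $w$ itself, whence $\theta$ is a degree-one, hence injective, parametrisation of $\Theta$. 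So everything comes down to showing that $\pi_1(\Gamma)\cap\pi_1(\Lambda)^{h}$ is cyclic, equivalently that $\Theta$ carries no edge off the cycle $\theta(S^1)$.

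The hard part is ruling out branching of $\Theta$ away from the $w$-cycle, and this is where the main obstacle lies. I would argue by contradiction, supposing that a vertex of $\Theta$ on $\theta(S^1)$ has valence at least three. Such a branch records a second alignment between a subpath of $\gamma$ in $\Gamma$ and a subpath of $\lambda$ in $\Lambda$ reading the same word as a subpath of the diagonal, i.e.\ a nontrivial self-overlap of the cyclic word $w$ once its letters are grouped according to the syllable structure coming from $x,y$ (first type) or $xy,z$ (second type). Since the Christoffel word $\pr_{p/q}$ is primitive and cyclically reduced, such an overlap can only arise from a repeated factor of the shape analysed in Lemmas \ref{no_double_subword_1} and \ref{no_double_subword_2}: taking $z$ to be the relevant generator word and $b\in\langle B\rangle$ the shared $B$-part, a second alignment would produce a freely reduced factorisation $(zb)^{i}=gzh$ with $g\notin\langle zb\rangle$, which those lemmas forbid unless $zb$ is a proper power. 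As $w$ is not a proper power, this is the desired contradiction. The bulk of the work is the bookkeeping that converts an extra edge of the fibre product into exactly one of these forbidden double-subword configurations; the delicate point is the behaviour at the places where the $x$- and $y$-syllables meet along the shared $B$-letters, where the malnormality of the family $\{\langle x\rangle,\langle y\rangle\}$ (respectively of $\langle z\rangle$) is precisely what is needed to exclude spurious matchings.
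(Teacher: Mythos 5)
Your first half is fine and matches the easy part of the paper's argument: the diagonal $\theta=(\gamma,\lambda)$ is an immersed cycle in the fibre product, $p_\Lambda\circ\theta=\lambda$, a connected core graph with infinite cyclic fundamental group is a circle, and torsion-freeness (hence $w$ not a proper power) upgrades the circle factorisation to degree one. The gap is in the ``hard part''. A branch vertex of $\Theta$ does \emph{not} record a self-overlap of the cyclic word $w$: it records an element of $\pi_1(\Gamma)\cap\pi_1(\Lambda)^{h}$ lying outside $\langle w\rangle$, and since $\Gamma$ is the rank-two core graph of $\langle x,y\rangle$ (not a circle carrying $w$), such an element need not be realised by any overlap of subwords of $w^{\pm\infty}$ --- it could simply be $x$ itself, if $x\in K^{h}$. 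Concretely, your argument never uses the defining properties of a $w$-subgroup (that $w$ is \emph{not primitive} in $K$ and that $K$ is maximal among rank-$\pi(w)$ subgroups containing $w$ non-primitively), and without some such input the claim you are trying to prove is false: taking $\Lambda=\Gamma$ and $\lambda=\gamma$, the diagonal component of $\core(\Gamma\times_{\Delta}\Gamma)$ is a copy of $\Gamma$, which branches, yet no configuration of the form $(zb)^{i}=gzh$ forbidden by Lemmas \ref{no_double_subword_1} and \ref{no_double_subword_2} appears. So the proposed reduction ``extra edge $\Rightarrow$ forbidden double-subword'' cannot be completed as stated.

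The paper takes a different route that supplies exactly the missing input. By Kent's theorem on intersections and joins, either $\chi(\core(\Gamma\times_{\Delta}\Lambda))=0$ --- in which case every core component is a circle and your degree-one argument finishes --- or $\rk\langle\pi_1(\Lambda)^{g},\pi_1(\Gamma)\rangle=2$ for some $g$. In the latter case the maximality property of $w$-subgroups forces $\pi_1(\Gamma)<\pi_1(\Lambda)^{g}$, and then Lemmas \ref{malnormal_condition_1} and \ref{malnormal_condition_2}, together with the malnormality hypotheses on $x,y,z$, force $\pi_1(\Gamma)=\pi_1(\Lambda)^{g}$, contradicting the fact that $w$ is not primitive in $K$ (it is a Christoffel word, hence primitive in $\langle x,y\rangle$). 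If you want to salvage a direct combinatorial argument at the level of the fibre product, you would at minimum have to feed in non-primitivity and maximality of $K$ at the branching step; the combinatorial lemmas on double subwords are used in the paper only later, in Lemma \ref{crossing_vertices}, and for a different purpose.
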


\begin{proof}
By \cite[Theorem 1]{kent_09}, either $\chi(\core(\Gamma\times_{\Delta}\Lambda)) = 0$, or $\rk(\langle \pi_1(\Lambda)^g,\pi_1(\Gamma)\rangle) = 2$ for some $g\in \pi_1(\Delta)$. In the first case, we must have that $\lambda$ factors through some component $S^1\subset \core(\Gamma\times_{\Delta}\Lambda)$. Since $G$ is torsion-free, $w$ cannot be a proper power by \cite{karrass_60}. Therefore, $\omega$ and $\lambda$ cannot factor through $S^1$ as a proper cover and we are done.

Now suppose that $\rk(\langle \pi_1(\Lambda)^g,\pi_1(\Gamma)\rangle) = 2$. By definition of $w$-subgroups, we must have that $\pi_1(\Gamma)<\pi_1(\Lambda)^g$.  Now it follows from our assumptions on $x, y, z$ and Lemmas \ref{malnormal_condition_1} and \ref{malnormal_condition_2}, that $\pi_1(\Gamma) = \pi_1(\Lambda)^g$, contradicting the fact that $w$ is not primitive in $K$.
\end{proof}

We will make use of the following factorisations of $x$ and $y$:
\begin{align*}
x &= b_1\cdot x_1^{-1}\cdot x_2\cdot x_1\cdot b_2~,\\
y &= b_3\cdot y_1^{-1}\cdot y_2\cdot y_1\cdot b_4~,
\end{align*}
as freely reduced words, where $b_1, b_2, b_3, b_4\in \langle B\rangle$, $x_1^{-1}\cdot x_2\cdot x_1$ and $y_1^{-1}\cdot y_2\cdot y_1$ do not begin or end with a $B$-letter and $x_2$ and $y_2$ are cyclically reduced. 

If $n\geq1$, denote by $u_n$ the free reduction of $x_1^{-1}x_2x_1(b_2b_1x_1^{-1}x_2x_1)^{n-1}$. Similarly, denote by $v_n$ the free reduction of $y_1^{-1}y_2y_1(b_4b_3y_1^{-1}y_2y_1)^{n-1}$.

\begin{lemma}
\label{crossing_vertices}
Let $n\geq 1$ and let $\alpha:I\immerses \Lambda$ be a path labelled by $u_n$ or $v_n$. Then $\alpha$ must traverse a vertex of degree at least three, other than at its endpoints.
\end{lemma}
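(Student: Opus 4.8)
The plan is to argue by contrapositive: assuming that $\alpha$ meets only vertices of degree two in its interior, I will extract enough structure to contradict that $K$ is a $w$-subgroup. The first step is purely topological. Since $\Lambda$ is a core graph it has no vertices of degree one, so at every interior vertex of $\alpha$ — being of degree exactly two — the immersion $\alpha$ is forced to leave by the one edge other than the one it entered. Hence $\alpha$ cannot backtrack and must progress monotonically along a single maximal arc of $\Lambda$, that is, a maximal segment whose interior vertices all have degree two and whose endpoints have degree at least three. In particular $\alpha$ is embedded, traverses pairwise distinct edges, and its label $u_n$ (resp.\ $v_n$) is a subword of the reduced label of that arc. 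The lemma is therefore equivalent to the assertion that neither $u_n$ nor $v_n$ occurs as a subword of the label of an arc of $\Lambda$.

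The second step is to exploit that $\Lambda$ has rank two, so after suppressing degree-two vertices it has at most three arcs, together with the fact that $\lambda$ reads the Christoffel word $w=\pr_{p/q}(x,y)$ (resp.\ $w=\pr_{p/q}(xy,z)$) and, being the graph of a $w$-subgroup of minimal rank, fills $\Lambda$. Because $y\in\langle B,C\rangle$ mentions no $A$-letter, every $A$-edge of $\Lambda$ is traversed by $\lambda$ only inside a syllable of $w$ coming from $x$; symmetrically for the $C$-edges and $y$. Since $u_n$ begins and ends with an $A$-letter and records the core of a power of $x$, an occurrence of $u_n$ inside a single arc would force the whole $A$-content of that power onto one branchless segment, so that the only interaction between the $x$-part and the $y$-part of the reading of $w$ takes place along the shared $B$-edges. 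This is exactly the exceptional-intersection phenomenon, and here is where the hypotheses on $\pr_{p/q}$ have to be used.

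The decisive, and hardest, step is then to rule this configuration out. I would do so by feeding the putative embedded core back into the earlier results: Lemmas \ref{malnormal_condition_1} and \ref{malnormal_condition_2} constrain any rank-two subgroup containing $\langle x,y\rangle$ (resp.\ $\langle xy,z\rangle$), while the self-overlap statements of Lemmas \ref{no_double_subword_1} and \ref{no_double_subword_2} forbid the cyclically reduced word $x_2$ (resp.\ $y_2$) from overlapping itself except trivially. Combining these with the malnormality of $\{\langle x\rangle,\langle y\rangle\}$ (resp.\ of $\langle z\rangle$), an embedded reading of the core should force either a nontrivial self-overlap of $x_2$, or $\langle x\rangle$ to be a proper power or a conjugate of $\langle y\rangle$, or the subgroup carried by $\Lambda$ to collapse onto $\langle x,y\rangle$, in which case $w$ becomes primitive in $K$; each alternative contradicts $K$ being a $w$-subgroup for a primitive exceptional intersection word. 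The main obstacle is precisely this last step: no edge is repeated by an embedded path, so the contradiction cannot come from counting alone and must be squeezed out of the shared $B$-letters. This is also why the case of a single $x$- or $y$-syllable (that is, $q=1$ or $p=1$) and the bookkeeping of the boundary blocks $b_1,\dots,b_4$ will require the most care.
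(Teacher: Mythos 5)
Your first, topological step is sound and matches what the paper implicitly does: if every interior vertex of $\alpha$ has degree two, then $\alpha$ runs along a single arc of $\Lambda$ and in particular traverses pairwise distinct edges. But the decisive step is missing, and you say so yourself (``the contradiction \dots must be squeezed out of the shared $B$-letters''). The paper's contradiction does in fact come from counting --- just not counting in $\Lambda$, but in the fibre product $\core(\Gamma\times_{\Delta}\Lambda)$, where $\Gamma$ is the core graph of $\langle x,y\rangle$ (resp.\ $\langle xy,z\rangle$). The two ingredients you are missing are: (i) a proof that $\Gamma$ supports \emph{at most one} path labelled $u_n$ --- this is where Lemmas \ref{no_double_subword_1} and \ref{no_double_subword_2} and the fact that $x$ is not a proper power (from malnormality of $\langle x\rangle$) actually get used, by showing that a second occurrence would force $u_n$ to overlap a translate of itself inside a power of $x$; and (ii) Lemma \ref{rank_1_pullback_2}, which supplies a circle component $\Theta\subset\core(\Gamma\times_{\Delta}\Lambda)$ projecting to $\Lambda$ as $\lambda$. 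Given (i) and your degree-two hypothesis, every edge in the image of $\alpha$ has at most one preimage in $\core(\Gamma\times_{\Delta}\Lambda)$, hence is traversed by $\lambda$ exactly once; a cycle crossing an edge exactly once represents a primitive element of $\pi_1(\Lambda)=K$, contradicting that $K$ is a $w$-subgroup. Without this lift-counting mechanism your list of alternatives (``should force either a nontrivial self-overlap of $x_2$, or $\langle x\rangle$ to be a proper power \dots'') is a hope rather than an argument, and your suggested use of Lemmas \ref{malnormal_condition_1} and \ref{malnormal_condition_2} and of condition (2) in the definition of primitive exceptional intersection words is misplaced here --- those enter in Lemma \ref{rank_1_pullback_2} and in the later case analysis over the three homeomorphism types of $\Lambda$, not in this lemma.

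A secondary inaccuracy: the lemma is stated for an arbitrary immersed path $\alpha$ labelled $u_n$ or $v_n$ in $\Lambda$, not only for subpaths of $\lambda$ or subwords of $w$, so your reduction to ``$u_n$ occurs as a subword of the label of an arc read by $\lambda$'' needs the extra observation that any edge of $\Lambda$ in the interior of an arc forces every immersed cycle through it (in particular the one coming from the fibre product) to traverse the whole arc. That is exactly the step that lets the uniqueness statement in $\Gamma$ control the number of preimages of a single edge.
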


\begin{proof}
We shall prove the result for $u_n$ as the other case is identical. Firstly, we show that $\Gamma$ supports at most one path with label $u_n$. If $\pi_1(\Gamma) = \langle xy, z\rangle$, then $\Gamma$ supports one path with label $u_1$ and no path with label $u_k$ for any $k\geq 2$. Let us suppose that $\pi_1(\Gamma) = \langle x, y\rangle$ and that there are two paths in $\Gamma$ with label $u_n$. Since $u_n$ begins and ends with an $A$-letter and does not contain any $C$-letters, it follows that:
\begin{enumerate}
\item if $b_2b_1\neq 1$ or $b_2b_1 = x_1 = 1$, then $u_n$ must be a subword of $u_{n+1}$ that is not a prefix or a suffix, or $u_n$ is a subword of $u_{n+1}^{-1}$,
\item if $b_2b_1 = 1$ and $x_1 \neq 1$, then $u_n$ must be a subword of $u_{m}^{\pm1}$ for some $m>n$.
\end{enumerate}
Since $\langle x\rangle$ is malnormal by definition, $x$ is not a proper power. The first situation cannot happen by Lemma \ref{no_double_subword_2}. The second situation cannot happen by Lemma \ref{no_double_subword_1}.

Now, since there is at most one path in $\Gamma$ with label $u_n$, there can be at most one lift of $\alpha$ to $\core(\Gamma\times_{\Delta}\Lambda)$ by definition of the fibre product graph. If $\alpha$ does not traverse vertices of degree three or more, except possibly at its endpoints, then each edge it traverses can have at most one preimage in $\core(\Gamma\times_{\Delta}\Lambda)$. So by Lemma \ref{rank_1_pullback_2}, since $\lambda$ is surjective, it must traverse some edge in $\Lambda$ precisely once. But then this would imply that $\lambda$ represents a primitive element of $\pi_1(\Lambda)$, contradicting the fact that $K$ was a $w$-subgroup. Therefore, we conclude that $\alpha$ must traverse a vertex of degree at least three, other than at its endpoints.
\end{proof}

We now use Lemma \ref{crossing_vertices} to derive a contradiction to the definitions of primitive exceptional intersection words. Let $\alpha:I\immerses \Lambda$ be a path satisfying the following:
\begin{enumerate}
\item $\alpha$ factors through $\lambda$,
\item $\alpha$ is labelled by $u_n$ for some $n> 0$,
\item there is no path $\alpha':I\immerses\Lambda$, strictly extending $\alpha$ and satisfying the above.
\end{enumerate}
We similarly define $\beta:I\immerses \Lambda$, replacing $u_n$ with $v_n$. Such paths exist by definition of $w$.

By Lemma \ref{crossing_vertices}, $\alpha$ and $\beta$ must traverse a vertex of degree at least three, away from their endpoints. Now the idea is to use this fact to divide $\Lambda$ according to where $\alpha$ or $\beta$ are supported. Since $\alpha$ does not traverse any $C$-edges and $\beta$ does not traverse any $A$-edges, they will block each other from traversing certain regions of $\Lambda$. 

Since $\chi(\Lambda) = -1$, and $\Lambda$ is a core graph, we only have three topologically distinct cases to consider:
\begin{enumerate}
\item $\Lambda$ is a rose graph, see Figure \ref{rose_cases},
\item $\Lambda$ is a theta graph, see Figure \ref{theta_cases},
\item $\Lambda$ is a spectacles graph, see Figure \ref{spectacles_cases}.
\end{enumerate}
Figures \ref{rose_cases}, \ref{theta_cases} and \ref{spectacles_cases} contain all the different cases, up to symmetry. Before proceeding with the case analysis, we briefly explain the diagrams. The red regions indicate sections that $\alpha$ traverses and must contain an $A$-edge; $\beta$ cannot traverse any edge in a red region. The blue regions indicate sections that $\beta$ traverses and must contain a $C$-edge; $\alpha$ cannot traverse any edge in a blue region. The yellow regions indicate sections that $\alpha$ or $\beta$ or both $\alpha$ and $\beta$ traverse. In any case, the yellow regions must only contain $B$-edges, but are allowed to have length zero when this does not change the topology of the underlying graph. The black regions indicate sections that are not traversed by either $\alpha$ or $\beta$ and are also allowed to have length zero when this does not change the topology of the underlying graph. The red vertices and blue vertices indicate the start and endpoints of $\alpha$ and $\beta$ respectively.

Topologically, in each diagram there can be at most three edges. The path $\alpha$ must leave one of these edges by Lemma \ref{crossing_vertices} and re-enter another edge, leaving enough space for $\beta$ to do the same elsewhere. Given these constraints, the reader should check that these are indeed all the cases to consider.

\begin{mycase}
\case We handle this case more in detail than the others as the arguments are mostly identical. We have three subcases to consider, according to Figure \ref{rose_cases}.

\begin{figure}
\centering
\includegraphics[scale=0.6]{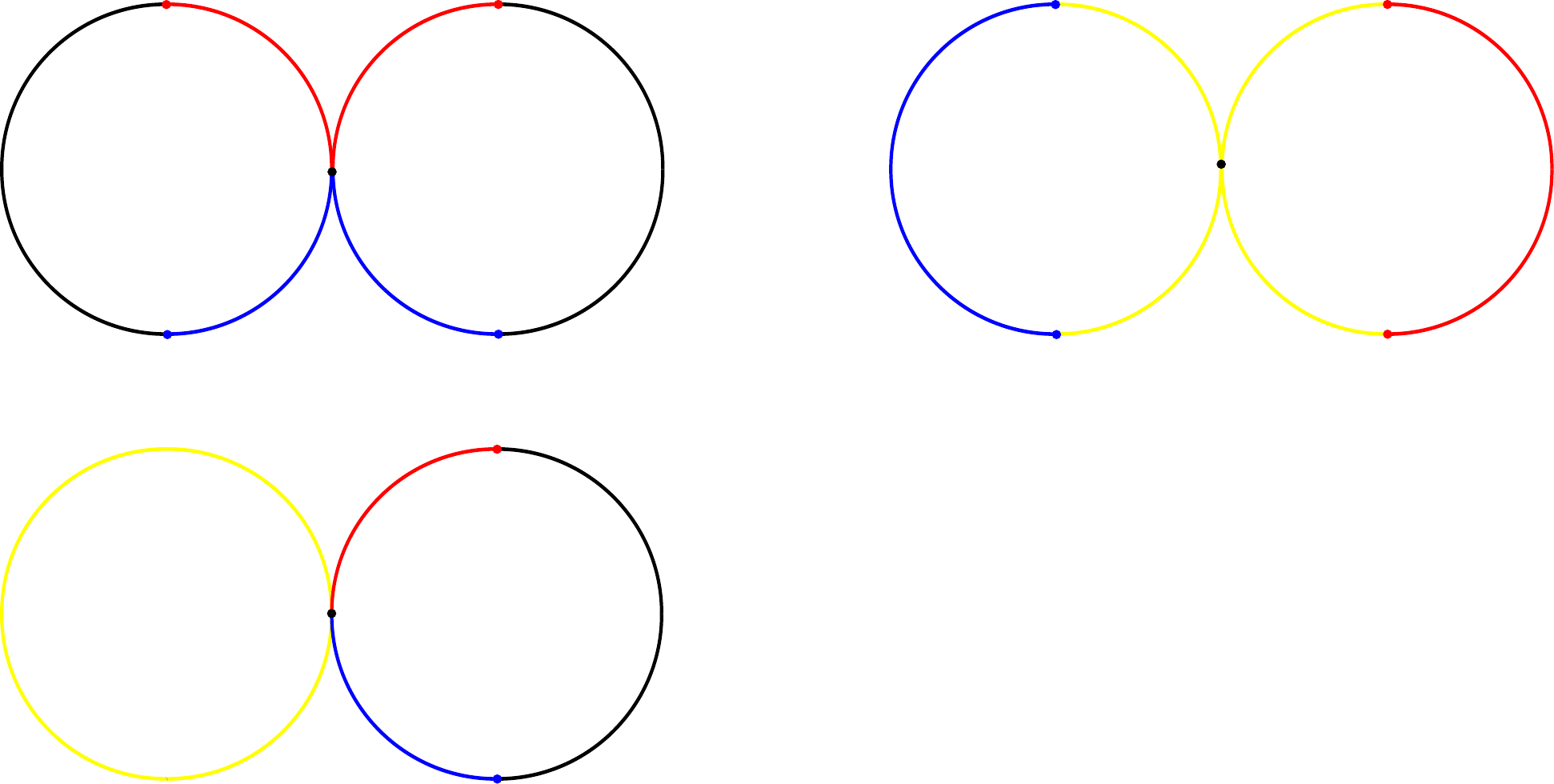}
\caption{Rose graph cases.}
\label{rose_cases}
\end{figure}

Suppose we are in the situation of the first subdiagram. When $\lambda$ traverses a red segment from a red vertex, it must then be followed by the other red segment. Otherwise we would obtain a contradiction to Lemma \ref{crossing_vertices}. Similarly for the blue segments. Thus, since $\lambda$ is primitive, it must traverse each edge precisely once. Hence, $\lambda$ would not represent a primitive element of $\pi_1(\Lambda)$ which is a contradiction.

Now consider the second subdiagram. Any (maximal) $u_n$ labelled path must begin at one red vertex and end at the other red vertex. Similarly for the $v_n$ labelled paths. But this then implies that only one power of $x$ and one power of $y$ appears in $w$. Thus, if $w$ is of the first type, then it must be equal to $xy$. If $w$ is of the second type, it must be equal to $xyz^i$ for some $i\geq 1$. By Lemma \ref{crossing_vertices}, $\lambda$ must traverse both loops at least twice. We may now deduce that there exist elements:
\begin{align*}
a &\in \langle A, B\rangle - \langle B\rangle~,\\
b &\in \langle B\rangle - 1~,\\
c &\in \langle B, C\rangle - \langle B\rangle~,
\end{align*}
such that $w = (ab^{-1})(bc)$ and that $\pi(a), \pi(c)\neq 1$. Hence, $\{\langle a\rangle, \langle c\rangle\}$ is not a malnormal family, contradicting our assumptions on $w$.

Let us move onto the third subdiagram. Similarly to the second subcase, we see that if $w$ is of the first type, it must be equal to $xy$ and if $w$ is of the second type, it must be equal to $xyz^i$ for some $i\geq 1$. From the diagram we may now deduce that there exist elements:
\begin{align*}
a &\in \langle A, B\rangle - \langle B\rangle~,\\
b &\in \langle B\rangle~,\\
c &\in \langle B, C\rangle - \langle B\rangle~,
\end{align*}
such that $w = (ab^{-1})(bc)$ and that $\langle a\rangle^g\cap \langle c\rangle\neq 1$ for some $g\in F(A, B, C)$. Hence, $\{\langle a\rangle, \langle c\rangle\}$ is not a malnormal family, contradicting our assumptions on $w$. It follows that $\Lambda$ cannot be a rose graph.

\begin{figure}
\centering
\includegraphics[scale=0.7]{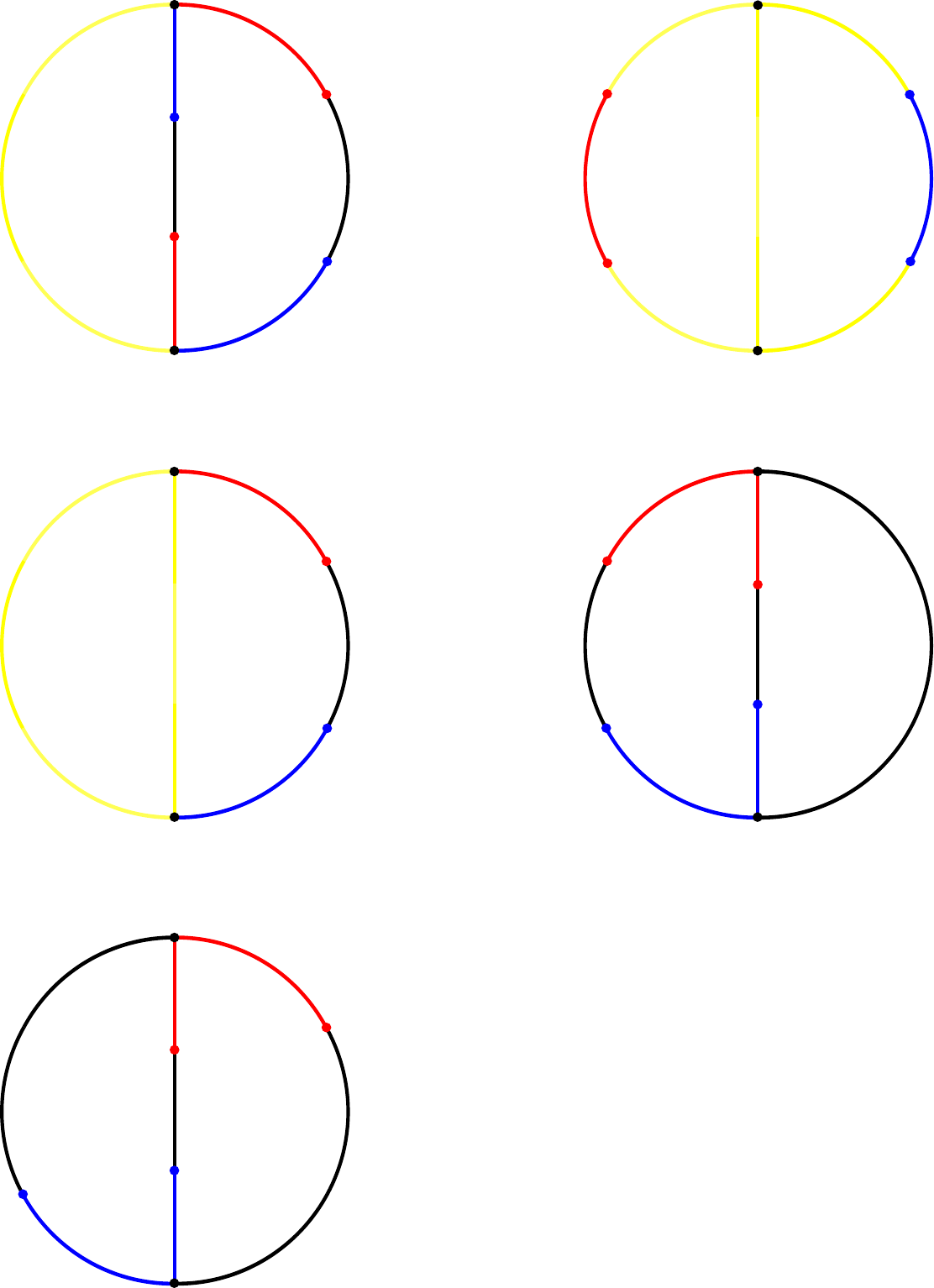}
\caption{Theta graph cases.}
\label{theta_cases}
\end{figure}

\case Consider the first subdiagram in Figure \ref{theta_cases}. By collapsing the yellow edge, we see that we may handle this case in the same way as the first subcase of the rose case. Similarly, we may reduce the second and third subcases to the second and third subcases of the rose case.

So now let us consider the new cases appearing in the fourth and fifth subdiagrams. Now if $b_2b_1\neq 1$ and $\alpha$ was labelled by $u_n$ with $n\geq 2$, then we would have that $\Lambda$ would support a path labelled by $u_1$ that does not traverse a vertex of degree at least three, away from its endpoints. But this contradicts Lemma \ref{crossing_vertices}. So if $b_2b_1 \neq 1$ and $n=1$, then there can be no other $u_m$-labelled path beginning or ending at a red vertex as $\Lambda\immerses\Delta$ is an immersion. This would imply that there can be no other $u_m$-labelled paths in $\Lambda$ for any $m\geq 1$ and so $\lambda$ traverses the red segments only once by Lemma \ref{rank_1_pullback_2}. Hence, $\lambda$ could not represent a primitive element of $\pi_1(\Lambda)$, a contradiction. So now we may assume that $b_2b_1 = 1$. By a symmetric argument, we may assume that $b_4b_3 = 1$. Similarly, we must have $x_1, y_1\neq 1$. As before, there must be at least one other path $\alpha':I\immerses\Lambda$ labelled by $u_m$ for some $m\geq 1$. We may assume that $\alpha'$ is maximal in the same sense that $\alpha$ was maximal. We see that $\alpha$ and $\alpha'$ must traverse a common segment. We now have two subcases to consider, up to symmetry, depending on whether $\alpha$ and $\alpha'$ traverse a common segment in the same direction or the opposite direction. See Figure \ref{bonus_diagram}. In either case, there can only be one lift of the segment with label $x_2^mx_1$ to $\core(\Gamma\times_{\Delta}\Lambda)$ for the following reason: the projection of any loop in $\core(\Gamma\times_{\Delta}\Lambda)$ traversing the segment labelled $x_2^mx_1$, must then traverse a blue segment labelled by $y_1^{-1}y_2^k$ for some $k\geq 0$. Since there is only one path in $\Gamma$ with this label, there can be only one lift of these segments to $\core(\Gamma\times_{\Delta}\Lambda)$. Now Lemma \ref{rank_1_pullback_2} tells us that $\lambda$ does not represent a primitive element of $\pi_1(\Lambda)$. It follows that $\Lambda$ cannot be a theta graph.

\begin{figure}
\centering
\includegraphics[scale=1.4]{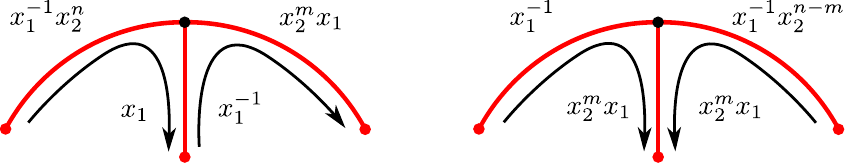}
\caption{Extra theta cases.}
\label{bonus_diagram}
\end{figure}

\begin{figure}
\centering
\includegraphics[scale=0.5]{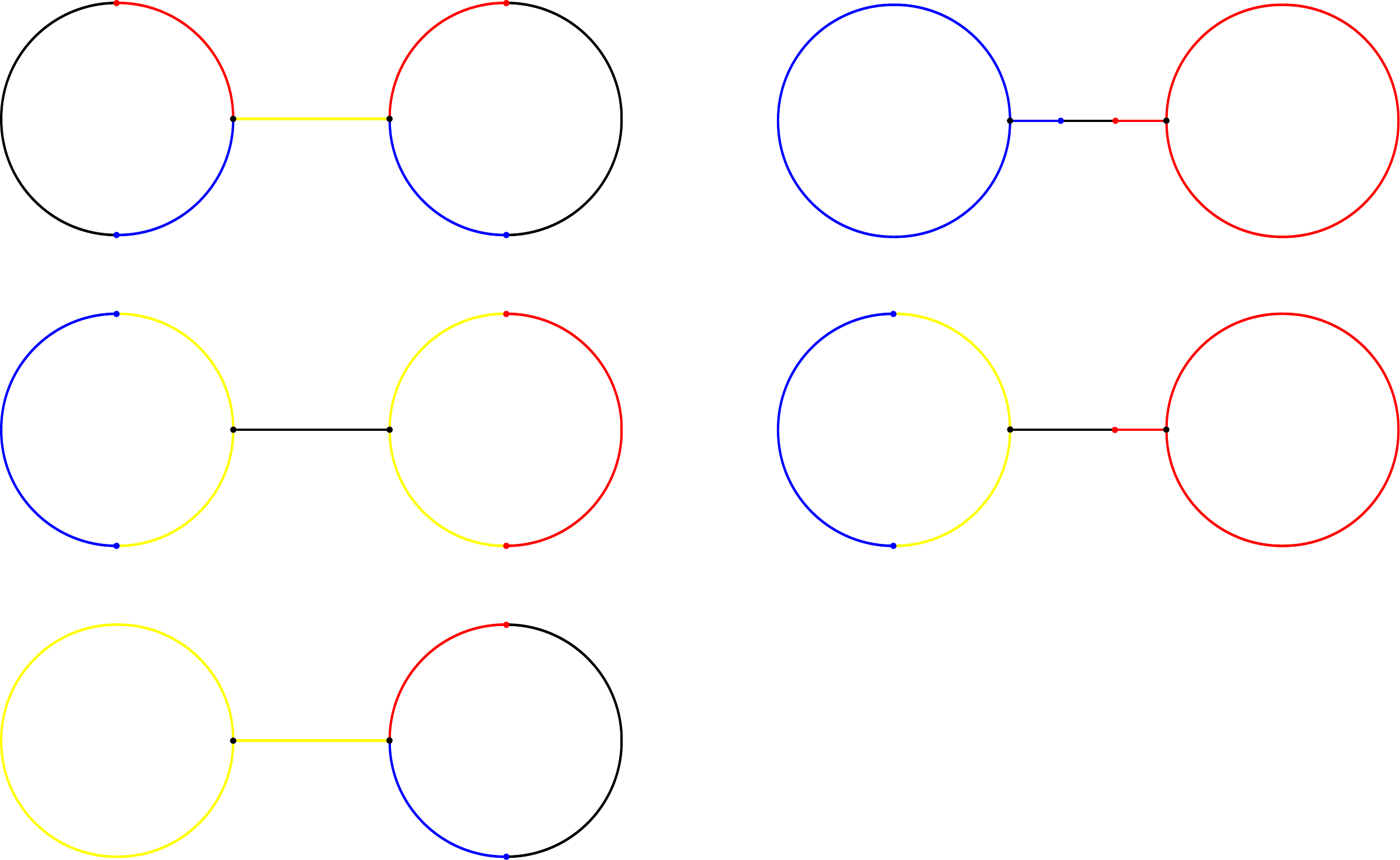}
\caption{Spectacles graph cases.}
\label{spectacles_cases}
\end{figure}

\case The first subdiagram in Figure \ref{spectacles_cases} is analogous to the first subdiagram of the rose case. The second, third and fourth subdiagrams are analogous to the second subdiagram of the rose case. The fifth subdiagram is analogous to the third subdiagram of the rose case. Hence, $\Lambda$ cannot be a spectacles graph.
\end{mycase}

Now we may conclude that there can be no $w$-subgroups of rank two and hence, $G$ must must be $2$-free.
\end{proof}

\subsection{The general case}

In this section, we consider the general case and show that primitive exceptional intersection groups are the only exceptional intersection groups which are $2$-free.

Theorem \ref{exceptional_intersection} was generalised to one-relator products in \cite{howie_05}. By specialising \cite[Theorem C]{howie_05} to the case of one-relator groups, we may obtain the following result.

\begin{theorem}
\label{exceptional_intersection_improved}
Let $F(\Sigma)/\normal{w}$ be a one-relator group and suppose $\Sigma = A\sqcup B\sqcup C$. If $\langle A, B\rangle$ and $\langle B, C\rangle$ are Magnus subgroups with exceptional intersection, then there is a monomorphism of free groups 
\[
\iota: F(a, c)\injects F(\Sigma)~,
\]
with the following properties. There is some $r\in F(a, c)$ with $\iota(r)$ conjugate to $w$ and some $m, n\neq 0$, such that one of the following holds:
\begin{enumerate}
\item $a^m = c^n$ in $F(a, c)/\normal{r}$ with $\iota(a) = x$, $\iota(c) = y$ and
\begin{align*}
x\in \langle A, B\rangle - \langle B\rangle~,\\
y\in \langle B, C\rangle - \langle B\rangle~.
\end{align*}
\item $ac^ma^{-1} = c^n$ in $F(a, c)/\normal{r}$ with $\iota(a) = xy$, $\iota(c) = z$ and
\begin{align*}
x& \in \langle A, B\rangle - \langle B\rangle~,\\
y & \in\langle B, C\rangle - \langle B\rangle~,\\
z & \in \langle B\rangle - 1~.
\end{align*}
\end{enumerate}
\end{theorem}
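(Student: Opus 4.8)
The plan is to obtain the statement as a specialisation of Howie's Theorem C in \cite{howie_05}, which is precisely the one-relator product generalisation of Collins' Theorem \ref{exceptional_intersection}. First I would record the ambient free product structure. Since $A$, $B$ and $C$ are disjoint, $F(\Sigma) = \langle A, B\rangle *_{\langle B\rangle} \langle B, C\rangle$, so that $G = F(\Sigma)/\normal{w}$ is a one-relator product of the free (hence locally indicable) groups $\langle A, B\rangle$ and $\langle B, C\rangle$ amalgamated over $\langle B\rangle$; the two Magnus subgroups are exactly the images of the two factors. In this language the exceptional intersection hypothesis says that these images meet in strictly more than $\langle B\rangle$, which by Theorem \ref{exceptional_intersection} means they meet in $\langle B\rangle * \Z$. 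This is exactly the configuration that Theorem C analyses.

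Next I would feed this configuration into Theorem C to extract the combinatorial data. Collins' theorem only tells us that there is an extra infinite cyclic factor $\langle t\rangle$ in the intersection; Howie's refinement identifies how $t$ sits relative to $w$. Writing $t$ both as an element $u \in \langle A, B\rangle$ and as an element $v \in \langle B, C\rangle$, the equality $t =_G u =_G v$ records a specific consequence $uv^{-1} \in \normal{w}$ of the relator, and Theorem C pins down its source: $w$ is conjugate in $F(\Sigma)$ into a two-generator free subgroup whose generators are visible inside the two factors, and the image of $w$ in the associated two-generator one-relator quotient carries the asserted relation. This produces the monomorphism $\iota \colon F(a, c) \injects F(\Sigma)$ and the element $r \in F(a,c)$ with $\iota(r)$ conjugate to $w$.

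The dichotomy then reflects how the exceptional generator is distributed across the two factors. In the first case it splits genuinely, as a pair $x \in \langle A, B\rangle - \langle B\rangle$ and $y \in \langle B, C\rangle - \langle B\rangle$; setting $\iota(a) = x$, $\iota(c) = y$ produces the torus-knot-type relation $a^m = c^n$. In the second case the generator is a conjugate $(xy)^{-1} z (xy)$ of an element $z \in \langle B\rangle - 1$, with $x, y$ as above; setting $\iota(a) = xy$, $\iota(c) = z$ produces the Baumslag--Solitar-type relation $ac^m a^{-1} = c^n$. In both cases $\iota$ is injective: its generators either project non-trivially to different free factors, or one lies in the root-closed free factor $\langle B\rangle$ while the other does not, so they cannot share a common cyclic overgroup in $F(\Sigma)$ and hence freely generate a rank-two subgroup. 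The integers $m, n$ are read off from the exponents witnessing $uv^{-1} \in \normal{w}$ (for instance $m = q$, $n = -p$ in the Christoffel model of Section \ref{pei_section}), and in particular are non-zero.

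The main obstacle is the faithful translation of Theorem C, which is stated for one-relator products of arbitrary locally indicable groups, into this concrete free-group dichotomy. I would need to check that once both factors are free its conclusion about the exceptional element collapses into exactly the two displayed normal forms, with the correct membership conditions on $x, y, z$ and with $m, n \neq 0$. The remaining points---that $\iota(r)$ is conjugate to $w$ itself rather than to a power of it (which uses that $w$ is not a proper power, since $G$ is torsion-free by Corollary \ref{torsion_free_exceptional} and \cite{karrass_60}), and that the relations hold already in the intrinsic quotient $F(a,c)/\normal{r}$ rather than merely in $G$---are routine bookkeeping once Theorem C has been applied.
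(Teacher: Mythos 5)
Your proposal takes exactly the route the paper does: the paper offers no proof of Theorem \ref{exceptional_intersection_improved} beyond the single remark that it is obtained ``by specialising \cite[Theorem C]{howie_05} to the case of one-relator groups,'' which is precisely your plan of viewing $F(\Sigma)$ as a one-relator product of locally indicable (free) factors with Magnus subgroups $\langle A, B\rangle$ and $\langle B, C\rangle$ and reading off the two normal forms from Howie's conclusion. Your additional bookkeeping (injectivity of $\iota$, $m,n\neq 0$, conjugacy of $\iota(r)$ to $w$) is consistent with, and more detailed than, what the paper records.
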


Using this result and the algorithm to compute the centre of a one-relator group from \cite{baumslag_67_centre}, Howie also showed that a generating set for the intersection of given Magnus subgroups is computable \cite[Theorem E]{howie_05}.

In the discussion following \cite[Theorem 4]{collins_04}, Collins points out the following.

\begin{corollary}
\label{exceptional_splitting}
Assume the notation of Theorem \ref{exceptional_intersection_improved} and suppose that we are in the first case. Denote by
\[
H = \langle B\rangle*F(a, c)/\normal{r}~.
\]
Then we have:
\[
G \isom \langle A, B\rangle \underset{\langle B, \iota(a)\rangle = \langle B, a\rangle}{*} H \underset{\langle B, c\rangle = \langle B, \iota(c)\rangle}{*} \langle B, C\rangle~.
\]
\end{corollary}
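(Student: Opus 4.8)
The plan is to present the right-hand side as the fundamental group of a graph of groups over a tree and to reduce that presentation, by Tietze transformations, to the one-relator presentation of $G$. Write $x = \iota(a)$ and $y = \iota(c)$, so that $x$ is a word in $A\sqcup B$ and $y$ a word in $B\sqcup C$. The underlying graph of the splitting is the path $\langle A, B\rangle - H - \langle B, C\rangle$, hence a tree, so its fundamental group is presented by the generators and relations of the three vertex groups together with the edge-identification relations, with no stable letters required. Taking $A\sqcup B\sqcup C$ together with the extra generators $a, c$ of $H = \langle B\rangle * F(a,c)/\normal{r}$, and noting that both edge maps fix $B$ pointwise, the identifications $\langle B, x\rangle = \langle B, a\rangle$ and $\langle B, c\rangle = \langle B, y\rangle$ contribute exactly the relations $a = x$ and $c = y$. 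This yields
\[
P = \langle A, B, C, a, c \mid r(a, c),\ a = x,\ c = y\rangle.
\]
Eliminating $a$ and $c$ by Tietze transformations leaves $\langle A, B, C \mid r(x, y)\rangle$. Since $\iota(r) = r(x, y)$ is conjugate to $w$ in $F(\Sigma)$, its normal closure is $\normal{w}$, and therefore $P \isom F(\Sigma)/\normal{w} = G$.

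Before this computation is legitimate I would check that the displayed decomposition really is a graph of groups, that is, that its edge maps are injective, so that the presentation above is the correct one and the notation is not abusive. Each edge group is abstractly free of rank $\abs{B}+1$, namely $\langle B\rangle * \Z$. For the map into $\langle A, B\rangle$ sending the $\Z$-generator to $x$, injectivity amounts to $\langle B, x\rangle = \langle B\rangle * \langle x\rangle$: since $x\notin \langle B\rangle$ forces $x$ to involve a generator of $A$, the classes of $B\cup\{x\}$ are independent in $H_1$, so the free subgroup $\langle B, x\rangle$ has rank $\abs{B}+1$ and is freely generated by $B\cup\{x\}$. The same argument applies to $\langle B, y\rangle \leq \langle B, C\rangle$. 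For the two maps into $H$, I would use that $\langle B\rangle$ is a free factor of $H$ and that $a$ and $c$ have infinite order in $F(a,c)/\normal{r}$; the latter holds because $G$, hence $w$ and therefore $r$, is not a proper power (Corollary \ref{torsion_free_exceptional}), so $F(a,c)/\normal{r}$ is torsion-free and the central element $a^m = c^n$ is nontrivial. Consequently $\langle B, a\rangle = \langle B\rangle * \langle a\rangle$ and $\langle B, c\rangle = \langle B\rangle * \langle c\rangle$ in $H$, matching the outer edge groups under $x\leftrightarrow a$ and $y\leftrightarrow c$.

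The main obstacle is exactly this edge-injectivity, and within it the embedding into the middle group $H$, since the abstract isomorphism $P\isom G$ is then only the routine Tietze bookkeeping above. A point to handle carefully is that all three copies of $\langle B\rangle$, inside $\langle A, B\rangle$, inside $H$, and inside $\langle B, C\rangle$, are identified consistently along the two edges; this is why a single generating set $B$ suffices and why the edge relations collapse to the two relations $a = x$ and $c = y$.
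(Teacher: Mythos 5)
Your argument is correct in outline and supplies a proof where the paper gives none: the paper simply attributes this splitting to the discussion following Collins's Theorem~4 and records it without proof, so your self-contained verification --- realise the right-hand side as the fundamental group of a graph of groups over a segment, check that the edge maps are injective, and collapse the resulting presentation by Tietze transformations to $\langle \Sigma\mid \iota(r)\rangle = \langle \Sigma\mid w\rangle$ --- is a genuinely different and more informative route. You correctly isolate edge-injectivity as the only real content; once it is in place, Bass--Serre theory gives for free that all three vertex groups embed in $G$, which simultaneously recovers the Freiheitssatz for $\langle A,B\rangle$ and $\langle B,C\rangle$ and the injectivity of $\bar{\iota}$ recorded in Corollary~\ref{induced_injective}.

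One step needs repair. To see that $\langle B, x\rangle = \langle B\rangle * \langle x\rangle$ inside $F(A,B)$ you argue that $x\notin\langle B\rangle$ forces $x$ to involve an $A$-generator and hence makes the classes of $B\cup\{x\}$ independent in $H_1$; the implication fails (take $x = aba^{-1}$ with $a\in A$, $b\in B$: then $x$ involves a generator of $A$ but $[x]=[b]$ in $H_1$). The conclusion is nevertheless true and can be obtained as follows. Let $\rho\colon F(A,B)\to\langle B\rangle$ be the retraction killing $A$ and set $K = \langle B,x\rangle$. Then $\rho|_K$ is onto $\langle B\rangle$, so if $\rk(K)\leq\abs{B}$ it would be a surjection of free groups of equal finite rank, hence an isomorphism by Hopficity; since it restricts to the identity on $\langle B\rangle\leq K$, injectivity would force $K=\langle B\rangle$, contradicting $x\notin\langle B\rangle$. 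Hence $\rk(K)=\abs{B}+1$ and the generating set $B\cup\{x\}$ of that cardinality is a free basis. The same applies to $\langle B,y\rangle\leq F(B,C)$. Your treatment of the middle vertex group is fine: $r$ is not a proper power because $w$ is not and $\iota$ is injective, so $F(a,c)/\normal{r}$ is torsion-free by Corollary~\ref{torsion_free_exceptional}, the elements $a$ and $c$ have infinite order, and $\langle B,a\rangle=\langle B\rangle*\langle a\rangle$ holds in $H$ by the normal form for free products.
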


\begin{corollary}
\label{exceptional_splitting_2}
Assume the notation of Theorem \ref{exceptional_intersection_improved} and suppose that we are in the second case. Let $\iota(a) = x\cdot y$ where $x\in \langle A, B\rangle - \langle B\rangle$ and $y \in \langle B, C\rangle - \langle B\rangle$ and denote by
\[
H = \langle B\rangle\underset{\langle \iota(c)\rangle = \langle c\rangle}{*}F(a, c)/\normal{r} \underset{\langle a\rangle = \langle de\rangle}{*} F(d, e)~.
\]
Then we have:
\[
G \isom \langle A, B\rangle\underset{\langle B, x\rangle = \langle B, d\rangle}{*} H \underset{\langle B, e\rangle = \langle B, y\rangle}{*} \langle B, C\rangle~.
\]
\end{corollary}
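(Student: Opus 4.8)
The plan is to follow the strategy of Corollary \ref{exceptional_splitting}, isolating the one new feature forced by the second case. Since $\iota(r)$ is conjugate to $w$ we have $\normal{\iota(r)} = \normal{w}$, so $G\isom F(A, B, C)/\normal{\iota(r)}$, and the goal is to realise $G$ as a graph of groups, built over the free splitting $F(A, B, C) = \langle A, B\rangle *_{\langle B\rangle}\langle B, C\rangle$, in which the relator $\iota(r) = r(\iota(a), \iota(c)) = r(xy, z)$ is confined to a single vertex. In the first case one simply attaches a vertex $\langle B\rangle * F(a,c)/\normal{r}$ carrying $r$, gluing $a$ to $x\in\langle A, B\rangle$ and $c$ to $y\in\langle B, C\rangle$. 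The obstruction now is that $\iota(a) = xy$ has its $x$-part in $\langle A, B\rangle$ and its $y$-part in $\langle B, C\rangle$, so $a$ cannot be attached to either Magnus vertex. The remedy, which is exactly the content of the definition of $H$, is to introduce an auxiliary free factor $F(d, e)$ and set $a = de$, so that $d$ may be amalgamated to $x$ and $e$ to $y$, while $c = \iota(c) = z$ is amalgamated directly into $\langle B\rangle$.

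First I would check that $H$ is a well-defined iterated amalgam, for which it suffices that $\langle a\rangle$ and $\langle c\rangle$ be infinite cyclic in $F(a,c)/\normal{r}$. Both are Magnus subgroups of this two-generator one-relator group — each of $a$ and $c$ occurs in $r$, since $ac^m a^{-1} = c^n$ is a nontrivial relation — so they are free by the Freiheitssatz, and being generated by a single element they are infinite cyclic. As $z\neq 1$ and $de\neq 1$ lie in free groups, the identifications $\langle c\rangle = \langle z\rangle$ and $\langle a\rangle = \langle de\rangle$ are legitimate and $H$ exists. Next I would identify the four edge groups. By the Kurosh subgroup theorem $\langle B, x\rangle = \langle B\rangle * \langle x\rangle$ and $\langle B, y\rangle = \langle B\rangle * \langle y\rangle$ are free of rank $|B| + 1$ with the displayed bases, using only that $x, y\notin\langle B\rangle$. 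Inside $H$ a Bass--Serre normal-form argument shows $\langle B, d\rangle = \langle B\rangle * \langle d\rangle$ and $\langle B, e\rangle = \langle B\rangle * \langle e\rangle$, using that $d, e\notin\langle de\rangle = \langle a\rangle$ and that $\langle B\rangle$ meets the edge group $\langle z\rangle$ as expected. Thus the maps $x\mapsto d$ and $y\mapsto e$, fixing $B$, are isomorphisms of edge groups onto their images, and the two outer amalgamations are valid.

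Finally, the isomorphism with $G$ is a Tietze computation on the resulting tree of groups: the fundamental group is $\langle A, B\rangle * H * \langle B, C\rangle$ modulo the edge relations $x = d$ and $y = e$, with the three copies of $\langle B\rangle$ identified. Eliminating the redundant generators via $d = x$, $e = y$, $c = z$ and $a = de = xy$ collapses $H$ onto the single relation $r(xy, z) = \iota(r)$, leaving the presentation $\langle A, B, C\mid\iota(r)\rangle = F(A, B, C)/\normal{w}\isom G$, as required. I expect the only genuinely technical step to be the Bass--Serre verification that $\langle B, d\rangle$ and $\langle B, e\rangle$ split as the claimed free products inside the amalgam $H$ and embed into it; everything else is either a direct translation of Collins's first-case argument or a routine application of the Freiheitssatz and the Kurosh theorem.
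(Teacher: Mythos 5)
Your outline is sound, but be aware that the paper itself offers no proof of this corollary: it is stated as an observation of Collins (the discussion following Theorem 4 of \cite{collins_04}), with only a typographical correction supplied, so there is no argument in the paper to compare against line by line. Your reconstruction is the natural one and matches what Collins's argument must be: build a tree of groups refining the free splitting $F(A,B,C) = \langle A,B\rangle *_{\langle B\rangle}\langle B,C\rangle$, confine the relator to a new vertex $F(a,c)/\normal{r}$, and --- this is the one genuinely new feature of the second case --- interpose the free factor $F(d,e)$ with $a = de$ so that the two halves of $\iota(a) = xy$ can be amalgamated separately into the two Magnus vertex groups while $c = z$ is amalgamated into $\langle B\rangle$. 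The Tietze collapse at the end correctly recovers $F(A,B,C)/\normal{\iota(r)} = F(A,B,C)/\normal{w}$.

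Two points deserve more care than you give them. First, your appeal to the Freiheitssatz to see that $\langle a\rangle$ and $\langle c\rangle$ are infinite cyclic in $F(a,c)/\normal{r}$ requires knowing that both generators occur in the cyclic reduction of $r$; this is true, but the justification should go through torsion-freeness (Corollary \ref{torsion_free_exceptional}) and $\iota(c) = z\neq 1$ rather than the bare assertion that $ac^ma^{-1} = c^n$ is ``a nontrivial relation'' (and note that when $r$ is primitive, as in conclusion (\ref{itm:primitive}) of Lemma \ref{second_exceptional_intersection}, the setup degenerates and the decomposition should be read accordingly). Second, the verification that $\langle B, d\rangle$ and $\langle B, e\rangle$ are free on the displayed bases \emph{inside} $H$ is the real content: the Bass--Serre normal form argument must contend with the fact that $\langle B\rangle$ meets the edge group $\langle z\rangle$ nontrivially, and with whatever intersection $\langle a\rangle\cap\langle c\rangle$ has in $F(a,c)/\normal{r}$. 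You correctly flag this as the technical crux but do not carry it out; since everything else is routine, this is the one place where your proposal is an outline rather than a proof.
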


\begin{remark}
There is a minor typographical error in the splitting provided by Collins for the second case. Corollary \ref{exceptional_splitting_2} is the corrected version.
\end{remark}

The following follows directly from Corollaries \ref{exceptional_splitting} and \ref{exceptional_splitting_2}.

\begin{corollary}
\label{induced_injective}
Assume the notation of Theorem \ref{exceptional_intersection_improved}. The monomorphism $\iota$ descends to a monomorphism of one-relator groups:
\[
\bar{\iota}:F(a, c)/\normal{r}\injects F(\Sigma)/\normal{w}~.
\]
\end{corollary}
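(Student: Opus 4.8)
The plan is to combine the observation that $\iota$ descends to a well-defined map with the two splittings provided by Corollaries \ref{exceptional_splitting} and \ref{exceptional_splitting_2}. First I would note that, since Theorem \ref{exceptional_intersection_improved} guarantees $\iota(r)$ is conjugate to $w$, the two elements generate the same normal subgroup of $F(\Sigma)$, so $\iota\left(\normal{r}\right)\leq \normal{w}$. Consequently $\iota$ descends to a homomorphism $\bar{\iota}:F(a,c)/\normal{r}\to F(\Sigma)/\normal{w}$, and it remains only to verify injectivity.

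The key idea is that in both cases $F(a,c)/\normal{r}$ sits inside $G$ as a vertex group of a graph-of-groups decomposition, so its injectivity is forced by Bass--Serre theory: vertex groups of a graph of groups with injective edge maps embed into the fundamental group, and a free factor embeds into a free product. In the first case, Corollary \ref{exceptional_splitting} writes $G$ as an amalgam along the indicated edge groups with central vertex group $H=\langle B\rangle *F(a,c)/\normal{r}$. Since vertex groups inject, $H\injects G$; as $F(a,c)/\normal{r}$ is a free factor of $H$, it injects into $H$ and hence into $G$. In the second case, Corollary \ref{exceptional_splitting_2} realises $F(a,c)/\normal{r}$ itself as the central vertex group of the amalgam $H$, and $H$ as the central vertex group of the decomposition of $G$; composing the two resulting injections again embeds $F(a,c)/\normal{r}$ into $G$.

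Finally I would check that the embedding produced this way coincides with $\bar{\iota}$. This is a matter of tracing the edge identifications. In the first case the amalgamations $\langle B,\iota(a)\rangle=\langle B,a\rangle$ and $\langle B,c\rangle=\langle B,\iota(c)\rangle$ identify the generators $a,c$ of the vertex copy of $F(a,c)/\normal{r}$ with $\iota(a)=x$ and $\iota(c)=y$, which are precisely their images under $\bar{\iota}$. In the second case one chases $a$ through $\langle a\rangle=\langle de\rangle$ and then through $\langle B,d\rangle=\langle B,x\rangle$ and $\langle B,e\rangle=\langle B,y\rangle$ to obtain $a=xy=\iota(a)$ in $G$, and $c$ through $\langle c\rangle=\langle \iota(c)\rangle$ to obtain $c=z=\iota(c)$. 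The only real point requiring care is this bookkeeping in the second case, where $F(a,c)/\normal{r}$ is embedded through a nested amalgam; once the generators are matched, the injectivity of $\bar{\iota}$ follows at once.
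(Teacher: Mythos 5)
Your proof is correct and follows the same route as the paper, which simply asserts that the corollary ``follows directly from Corollaries \ref{exceptional_splitting} and \ref{exceptional_splitting_2}''; you have merely filled in the details the paper leaves implicit (well-definedness of $\bar{\iota}$ from the conjugacy of $\iota(r)$ and $w$, the embedding of vertex groups via Bass--Serre theory, and the bookkeeping identifying the resulting embedding with $\bar{\iota}$).
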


Theorem \ref{exceptional_intersection_improved}, coupled with Corollary \ref{induced_injective}, finds us two-generator one-relator subgroups of exceptional intersection groups. We now characterise precisely what these subgroups can be.

\begin{lemma}
\label{first_exceptional_intersection}
Let $H = F(a, c)/\normal{r}$ be torsion-free such that $a^m = c^n$ in $H$ for some $m, n\neq 0$. Then one of the following hold:
\begin{enumerate}
\item $r\in F(a, c)$ is primitive and so $H\isom \Z$,
\item $H$ is non-cyclic with non-trivial centre.
\end{enumerate}
\end{lemma}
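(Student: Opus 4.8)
The plan is to isolate the element $t=a^m=c^n$, show it is a nontrivial central element of $H$, and then separate the two cases by deciding whether $H$ is cyclic. The whole argument rests on the trivial commutation relations together with one input from the paper's machinery.

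First I would record that both $a$ and $c$ genuinely occur in $r$. Passing to the abelianisation $H^{\ab}\isom\Z^2/\langle(\alpha,\beta)\rangle$, where $(\alpha,\beta)$ is the pair of exponent sums of $r$, the relation $a^m=c^n$ forces $(m,-n)\in\langle(\alpha,\beta)\rangle$. Since $m,n\neq 0$, this is impossible when $(\alpha,\beta)=(0,0)$, so $(\alpha,\beta)\neq 0$ and in fact $(m,-n)=k(\alpha,\beta)$ for some $k\neq 0$; hence $\alpha,\beta\neq 0$ and both generators are mentioned in $r$. (In particular $H^{\ab}$ is infinite, so $H$ is nontrivial, and $r\neq 1$.) Therefore $\langle a\rangle$ and $\langle c\rangle$ are Magnus subgroups, so by the Freiheitssatz each is freely generated by a single element, i.e.\ infinite cyclic. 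In particular $a$ and $c$ have infinite order and $t:=a^m=c^n\neq 1$.

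The key observation is that $t$ is automatically central: since $t=a^m\in\langle a\rangle$ it commutes with $a$, and since $t=c^n\in\langle c\rangle$ it commutes with $c$. As $a$ and $c$ generate $H$, we conclude $t\in Z(H)$, and $t\neq 1$ by the previous step. Thus $H$ has non-trivial centre in either case, and the only thing left to decide is whether $H$ is cyclic. If $r$ is primitive, extend it to a basis of $F(a,c)$; killing a basis element of a rank-two free group yields $H\isom\Z$, which is conclusion \eqref{itm:first}. If $r$ is imprimitive, I claim $H\not\isom\Z$. Since $H$ is torsion-free, $r$ is not a proper power (a one-relator group whose relator is a proper power has torsion, cf.\ \cite{karrass_60}). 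Combined with imprimitivity of $r$ inside $F(a,c)$ this gives $\pi(r)=2$: the rank-two subgroup $F(a,c)$ itself contains $r$ as an imprimitive element, so $\pi(r)\leq 2$, while $\pi(r)=1$ would exhibit $r$ as a power $g^j$ with $|j|\geq 2$ (using $r\neq 1$), forcing $r$ to be a proper power, so $\pi(r)\geq 2$. By Theorem \ref{lw_2_free}, $H$ is not $2$-free; in particular $H$ is not free, hence $H\not\isom\Z$. As $H$ is a nontrivial torsion-free group, this is equivalent to $H$ being non-cyclic, and together with the centrality of $t$ this is conclusion \eqref{itm:second}.

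The only genuinely nontrivial point is ruling out $H\isom\Z$ when $r$ is imprimitive; the cleanest route is the primitivity-rank computation $\pi(r)=2$ feeding into Theorem \ref{lw_2_free}, where the one subtlety is invoking torsion-freeness to exclude $\pi(r)=1$. Everything else reduces to bookkeeping with the abelianisation and the observation that a common power of the two generators always commutes with both of them.
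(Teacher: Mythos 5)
Your proof is correct and follows the same skeleton as the paper's: observe that $a^m=c^n$ is a nontrivial central element (it commutes with $a$ and with $c$, which generate), then reduce the dichotomy to whether $r$ is primitive. The difference lies in how the second half is justified. The paper simply cites Lyndon and Schupp (Chapter~II, Proposition~5.11) for the equivalence ``$H$ is cyclic if and only if $r$ is primitive,'' and asserts without further comment that $\langle a^m\rangle$ is infinite. You instead derive the non-cyclic direction from machinery the paper already quotes: torsion-freeness excludes $r$ being a proper power by \cite{karrass_60}, so an imprimitive nontrivial $r$ has $\pi(r)=2$, and Theorem~\ref{lw_2_free} then shows $H$ is not $2$-free, hence not free, hence not $\Z$ (and not trivial since its abelianisation is infinite). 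You also supply the justification, elided in the paper, that $a^m\neq 1$: the exponent-sum computation forces both generators to occur in $r$, so $\langle a\rangle$ is a Magnus subgroup and the Freiheitssatz gives $a$ infinite order. Your route is longer but self-contained relative to the results already stated in the paper, whereas the paper's argument is a one-line appeal to a classical external proposition; both are valid.
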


\begin{proof}
Note that $H$ has non-trivial centre as $\langle a^m\rangle$ is an infinite subgroup contained in the centre. By \cite[Chapter II Proposition 5.11]{lyn_00}, $H$ is cyclic if and only if $r$ is primitive.
\end{proof}

\begin{lemma}
\label{second_exceptional_intersection}
Let $H = F(a, c)/\normal{r}$ be torsion-free such that $a^{-1}c^ma = c^n$ in $H$ for some $m, n\neq0$. Then one of the following hold:
\begin{enumerate}
\item\label{itm:primitive} $r\in F(a, c)$ is primitive and so $H\isom \Z$,
\item\label{itm:centre} $H$ is non-cyclic with non-trivial centre,
\item\label{itm:exceptional_BS} $r$ or $r^{-1}$ is conjugate to $\pr_{p/q}(c^{-1}, a^{-1}ca)$ for some $p/q\in \Q_{>0}$ and so $H\isom BS(p, q)$.
\item\label{itm:centre_2} $r$ or $r^{-1}$ is conjugate in $F(a, c)$ to an element $r'\in \langle c, a^{-1}ca\rangle$ such that $\langle c, a^{-1}ca\rangle\isom F(c, a^{-1}ca)/\normal{r'}$ is non-cyclic with non-trivial centre.
\end{enumerate}
\end{lemma}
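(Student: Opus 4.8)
The plan is to examine the relator $r$ through the rank-two free subgroup $L=\langle c, a^{-1}ca\rangle$ and to lean on the Lyndon--Schupp criterion (cyclic $\Leftrightarrow$ primitive) that already powered Lemma \ref{first_exceptional_intersection}. Write $t=a^{-1}ca$. Since $a\notin\langle c\rangle$ and $\langle c\rangle$ is malnormal in $F(a,c)$, the conjugate $t$ together with $c$ freely generate, so $L\isom F(c,t)$; moreover the hypothesis $a^{-1}c^ma=c^n$ is precisely the statement that $t^m=c^n$ in $H$, equivalently that $s:=t^mc^{-n}$ is a \emph{nontrivial} element of $\normal{r}\le F(a,c)$ which happens to lie in $L$ (nontriviality is again malnormality of $\langle c\rangle$).

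First I would dispose of the degenerate branches. If $r$ is primitive in $F(a,c)$, then $H\isom\Z$ by \cite[Chapter II Proposition 5.11]{lyn_00}, giving (\ref{itm:primitive}). So assume $r$ is imprimitive; as $H$ is torsion-free, $r$ is not a proper power, and $c$ has infinite order in $H$ by the Freiheitssatz (it generates a Magnus subgroup), with $c\neq 1$ in $H$ since otherwise $r$ would be conjugate to $c$ and hence primitive. If $m=n$, then $a^{-1}c^ma=c^m$ shows that $c^m$ commutes with both $a$ and $c$, so $c^m$ is a nontrivial central element; as $r$ is imprimitive, $H$ is non-cyclic, placing us in (\ref{itm:centre}).

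There remains the case $m\neq n$, in which $c^n$ is \emph{not} central (otherwise $a^{-1}c^na=c^n=a^{-1}c^ma$ would force $c^m=c^n$ and $c$ would be torsion). Here I would prove the key reduction: $r$ is conjugate in $F(a,c)$ to an element $r'\in L$, and moreover $L\cap\normal{r}=\normal{r'}^{L}$, so that the image $\bar L\le H$ is exactly the one-relator group $F(c,t)/\normal{r'}$. The natural approach is to realise $F(a,c)=N\rtimes\langle a\rangle$, where $N=\normal{c}$ is free of infinite rank with basis $\{a^{-i}ca^{i}\}_{i\in\Z}$ and $L=\langle c, a^{-1}ca\rangle$ is a free factor (a Magnus subgroup) of $N$; one then uses that the nontrivial $s\in L\cap\normal{r}$, of zero $a$-exponent, together with a Freiheitssatz-type analysis, locates a conjugate of $r$ inside $L$ and controls the intersection of normal closures. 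Establishing this reduction is the \textbf{main obstacle}; the control $L\cap\normal{r}=\normal{r'}^{L}$ is exactly what guarantees that $s\in\normal{r'}^{L}$, i.e.\ that $t^m=c^n$ already holds in $\bar L$, so $c^n=t^m$ is a nontrivial central element of $\bar L$.

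Granting the reduction, $\bar L=F(c,t)/\normal{r'}$ is torsion-free (a subgroup of $H$) and has nontrivial centre $c^n=t^m$, so by \cite[Chapter II Proposition 5.11]{lyn_00} it is cyclic if and only if $r'$ is primitive in $F(c,t)$. If $r'$ is primitive, then $\bar L\isom\Z$; ruling out $r'$ conjugate to $c^{\pm1}$ or $t^{\pm1}$ (either of which would force $c=1$ in $H$, contradicting this branch), the Osborne--Zieschang classification \cite[Theorem 1.2]{osborne_81} identifies $r'$, up to conjugacy and inversion, with a Christoffel word $\pr_{p/q}(c^{-1}, a^{-1}ca)$ for some $p/q\in\Q_{>0}$; a Tietze transformation then rewrites the presentation $\langle a,c\mid \pr_{p/q}(c^{-1}, a^{-1}ca)\rangle$ as $\langle a,c\mid a^{-1}c^pa=c^q\rangle$, so that $H\isom BS(p,q)$, which is (\ref{itm:exceptional_BS}). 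If instead $r'$ is imprimitive, then $F(c,t)/\normal{r'}$ is non-cyclic with nontrivial centre $c^n=t^m$, which is precisely (\ref{itm:centre_2}). Besides the reduction above, the two remaining points I expect to need care are the faithfulness $\bar L\isom F(c,t)/\normal{r'}$ and the Christoffel--Tietze identification underlying (\ref{itm:exceptional_BS}).
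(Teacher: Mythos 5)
Your outline agrees with the paper's overall strategy and its endgame, but the step you yourself flag as the \emph{main obstacle} --- conjugating $r$ into $L=\langle c, a^{-1}ca\rangle$ and identifying the image of $L$ in $H$ with $F(c,a^{-1}ca)/\normal{r'}$ --- is where essentially all the content of the proof lies, and the mechanism you gesture at for it does not work. The existence of a nontrivial element $s=(a^{-1}ca)^mc^{-n}$ of $\normal{r}$ lying in $N=\normal{c}$ carries no information on its own: any two nontrivial normal subgroups of a nonabelian free group intersect nontrivially, so $\normal{r}\cap N\neq 1$ for \emph{every} nontrivial $r$, whether or not $r$ is conjugate into $N$. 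The missing idea is an abelianization computation: in $H^{\ab}\isom \Z^2/\langle(\sigma_a(r),\sigma_c(r))\rangle$ the image of $c^{m-n}$ is trivial, so when $m\neq n$ the image of $c$ is torsion, which forces $\sigma_a(r)=0$ (and $\sigma_c(r)\neq 0$). Only this licenses the Magnus rewriting of $r$, after conjugation, as a word $r'$ in the generators $c_i=a^{-i}ca^{i}$ of $N$.

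Even granting that, you still need to show that $r'$ can be taken to involve only $c_0$ and $c_1$, i.e.\ that the Magnus width is one, rather than $r'$ lying in $\langle c_0,\dots,c_k\rangle$ with $k\geq 2$; your sketch is silent on this. The paper extracts it from the Freiheitssatz: the vertex group $\langle c_0,\dots,c_k\mid r'\rangle$ of the Moldavanskii splitting embeds in $H$, and for $k\geq 2$ its subgroup $\langle c_0,c_1\rangle$ is free of rank two, contradicting the relation $c_1^m=c_0^n$ which holds in $H$. The remainder of your argument --- the $m=n$ branch giving a central $c^m$, the Lyndon--Schupp dichotomy between conclusions (\ref{itm:primitive})/(\ref{itm:centre}) and (\ref{itm:centre_2}), and the Osborne--Zieschang identification of a primitive $r'$ with a Christoffel word for conclusion (\ref{itm:exceptional_BS}) --- matches the paper and is fine, modulo checking the sign conventions in the Christoffel normal form.
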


\begin{proof}
Suppose that $\abs{m}$ is smallest possible. If $n = m = \pm1$, then either $r$ is primitive or $H\isom \Z^2$. If $n = m\neq \pm 1$, then $H$ has non-trivial centre generated by $c^n$. So if $n = m$, we have obtained conclusion (\ref{itm:primitive}) or (\ref{itm:centre}). Now suppose that $n\neq m$. Therefore, the exponent sum of $c$ in $r$ must be non-zero. There is a single epimorphism, up to sign change, $\phi:F(a, c)\to \Z$ such that $\phi(r) = 0$, given by 
\begin{align*}
a &\to \frac{-\sigma_c(r)}{\gcd(\sigma_a(r), \sigma_c(r))}~,\\
c &\to \frac{\sigma_a(r)}{\gcd(\sigma_a(r), \sigma_c(r))}~,
\end{align*}
where $\sigma_a(r), \sigma_c(r)$ denote the exponent sums of $a$ and $c$ in $r$ respectively. But now $\phi(a^{-1}c^mac^{-n}) = 0$ from which it follows that
\[
0 = \phi(c^m) - \phi(c^n) = (m-n)\sigma_a(r)~.
\]
By Magnus rewriting, since $\sigma_a(r) = 0$, we have that 
\[
H = \langle c_0, ..., c_k, a \mid ac_0a^{-1} = c_1, ..., ac_{k-1}a^{-1} = c_k, r'\rangle,
\]
where $r'$ is the rewriting of $r$ in terms of $c = c_0, ..., c_k$. By the Freiheitssatz (see \cite[Theorem 4.10]{magnus_04}), it follows that $k = 1$. If $\langle c, aca^{-1}\rangle\isom \Z$, then we have obtained conclusion (\ref{itm:exceptional_BS}). If not, then we have obtained conclusion (\ref{itm:centre_2}).
\end{proof}

Recall that a \emph{generalised Baumslag--Solitar group} is group that splits as a graph of groups where each vertex and edge group is infinite cyclic.

\begin{proposition}
\label{gbs_relation}
Let $G = \langle a, c \mid r\rangle$ be a one-relator group in which either $a^m = c^n$ or $ac^ma^{-1} = c^n$ holds. Then $G$ is a generalised Baumslag--Solitar group.
\end{proposition}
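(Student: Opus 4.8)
The plan is to pass through the structural dichotomies already established in Lemmas \ref{first_exceptional_intersection} and \ref{second_exceptional_intersection} and to recognise each surviving possibility as the fundamental group of a graph of infinite cyclic groups. At the outset I would clear away the degenerate cases. If $r$ is primitive then $F(a,c)/\normal{r}\isom\Z$, which is a generalised Baumslag--Solitar group supported on a single vertex, so we may assume $r$ is imprimitive. We may also assume that $G$ is torsion-free: a one-relator group whose relator is a proper power has torsion and is hyperbolic by Newman's theorem, whereas either of the relations $a^m=c^n$ or $ac^ma^{-1}=c^n$ (with $m,n\neq 0$) forces $G$ to contain a Baumslag--Solitar or $\Z^2$ subgroup, or an infinite-order central element, none of which is possible in a non-elementary hyperbolic group. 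With $r$ imprimitive and $G$ torsion-free, Lemmas \ref{first_exceptional_intersection} and \ref{second_exceptional_intersection} apply.

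In the case $a^m=c^n$, Lemma \ref{first_exceptional_intersection} leaves only the possibility that $G$ is non-cyclic with non-trivial centre. In the case $ac^ma^{-1}=c^n$, Lemma \ref{second_exceptional_intersection} leaves three: $G\isom\bs(p,q)$; $G$ is non-cyclic with non-trivial centre; or $r$ is conjugate into $\langle c, a^{-1}ca\rangle$, where $B:=F(c,a^{-1}ca)/\normal{r'}$ is non-cyclic with non-trivial centre. The first of these is immediate, since $\bs(p,q)=\langle a,c\mid ac^pa^{-1}=c^q\rangle$ is the HNN extension of $\langle c\rangle\isom\Z$ associating the infinite cyclic subgroups $\langle c^p\rangle$ and $\langle c^q\rangle$, and so is already a generalised Baumslag--Solitar group.

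The substance of the argument is therefore the centre cases, where I would appeal to the Murasugi--Pietrowski classification: a non-cyclic one-relator group with non-trivial centre has a presentation
\[
\langle x_0,\dots,x_k \mid x_0^{b_0}=x_1^{a_1},\; \dots,\; x_{k-1}^{b_{k-1}}=x_k^{a_k}\rangle,
\]
exhibiting it as an iterated amalgam of infinite cyclic groups along infinite cyclic edge groups, and hence as a generalised Baumslag--Solitar group. This settles the centre case for $G$ directly. For the last outcome of Lemma \ref{second_exceptional_intersection}, where only the base $B$ is known to be of this form, I would write $G$ as the HNN extension of $B$ with stable letter $a$ conjugating $\langle c\rangle$ onto $\langle a^{-1}ca\rangle$, and splice this new edge into the graph-of-cyclic-groups decomposition of $B$; provided the associated subgroups $\langle c\rangle$ and $\langle a^{-1}ca\rangle$ are elliptic in that decomposition, the result is again a graph of infinite cyclic groups.

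The main obstacle is the centre case, which rests on the Murasugi--Pietrowski structure theorem for one-relator groups with centre. The only genuinely new bookkeeping lies in the final step: one must verify that the cyclic associated subgroups of the HNN extension arising in Lemma \ref{second_exceptional_intersection} are conjugate into the vertex groups of the centre-decomposition of $B$, so that attaching the stable letter assembles everything into a single graph of infinite cyclic groups rather than merely an HNN extension of a generalised Baumslag--Solitar group over an arbitrary cyclic subgroup.
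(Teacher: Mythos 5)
Your route is the paper's route: dispose of the primitive and $\bs(p,q)$ outcomes directly, invoke the Murasugi--Pietrowski normal form $\langle a_1,\dots,a_m\mid a_1^{p_1}=a_2^{q_1},\dots,a_{m-1}^{p_{m-1}}=a_m^{q_{m-1}}\rangle$ for the non-cyclic-with-centre outcomes, and, in outcome (\ref{itm:centre_2}) of Lemma \ref{second_exceptional_intersection}, splice the stable letter $a$ onto the graph-of-cyclic-groups decomposition of $B=\langle c,a^{-1}ca\rangle$. But the step you leave conditional --- ``provided the associated subgroups $\langle c\rangle$ and $\langle a^{-1}ca\rangle$ are elliptic in that decomposition'' --- is precisely where the content of the proposition lies in that case, and calling it ``bookkeeping'' understates it. Ellipticity of a given element in a generalised Baumslag--Solitar group is not automatic: such groups with non-trivial centre contain many hyperbolic elements (e.g.\ $ab$ in the trefoil group $\langle a,b\mid a^2=b^3\rangle$), and an HNN extension of a generalised Baumslag--Solitar group over an infinite cyclic subgroup that is not elliptic has no reason to be a graph of infinite cyclic groups. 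So as written the argument does not close. The paper closes it by citing \cite[Lemma 3]{pietrowski_74}: for a two-generator one-relator group with non-trivial centre put in the normal form above, each of the two given generators is conjugate to a power of some $a_i$. Applied to the generators $c$ and $a^{-1}ca$ of $B$, this places both associated subgroups inside conjugates of vertex groups, whence $G\isom\langle a_1,\dots,a_m,b\mid a_1^{p_1}=a_2^{q_1},\dots, b^{-1}a_i^kb=a_j^l\rangle$ is visibly a graph of infinite cyclic groups. (The paper also notes that the abelianisation of $B$ has rank one, which is what makes Pietrowski's normal form applicable here.) You need this lemma, or an equivalent argument, to finish.

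Two smaller remarks. First, your reduction to the torsion-free case is not needed in the paper --- the proposition is applied to subgroups of exceptional intersection groups, which are torsion-free by Corollary \ref{torsion_free_exceptional} --- and as sketched it is incomplete: before concluding that $a^m=c^n$ yields an infinite-order central element in a hyperbolic group you must also rule out that $a^m$ is a torsion element and deal with the elementary case. Second, everything else (the identification of $\Z$ and $\bs(p,q)$ as generalised Baumslag--Solitar groups, and the use of the centre structure theorem for the remaining outcomes of Lemmas \ref{first_exceptional_intersection} and \ref{second_exceptional_intersection}) matches the paper and is fine.
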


\begin{proof}
The first case follows from Lemma \ref{first_exceptional_intersection} and \cite[Theorems 1 \& 3]{pietrowski_74}. Similarly, in the second case we may conclude that $G$ is a generalised Baumslag--Solitar group by Lemma \ref{second_exceptional_intersection}, unless $n\neq m$ and $r$ or $r^{-1}$ is conjugate in $F(a, c)$ to an element $w\in \langle c, a^{-1}ca\rangle$ such that $H = \langle c, a^{-1}ca\rangle\isom F(c, a^{-1}ca)/\normal{w}$ is non-cyclic with non-trivial centre. So let us suppose that we are in the latter case. Since $ac^ma^{-1} = c^n$ holds in $H$, the rank of the abelianisation of $H$ must be one. Then by \cite[Theorem 1]{pietrowski_74}, we have:
\[
H \isom \langle a_1, ...,. a_m \mid a_1^{p_1} = a_2^{q_1}, ..., a_{m-1}^{p_{m-1}} = a_m^{q_{m-1}}\rangle
\]
where $p_i, q_i\geq 2$ and $\gcd(p_i, q_j) = 1$ for all $i>j$. Then by \cite[Lemma 3]{pietrowski_74}, $c$ and $a^{-1}ca$ are both conjugate within $H$ to some subgroup $\langle a_i\rangle<H$. Suppose that $c^g = a_i^k$ and $(a^{-1}ca)^h = a_j^l$. Then we have that:
\[
G \isom \langle a_1, ..., a_m, b \mid a_1^{p_1} = a_2^{q_1}, ..., a_{m-1}^{p_{m-1}} = a_m^{q_{m-1}}, b^{-1}a_i^kb = a_j^l\rangle
\]
where $b = g^{-1}ah$. Thus, we may conclude that $G$ is a generalised Baumslag--Solitar group.
\end{proof}

We are now ready to prove the main result of this section.

\begin{theorem}
\label{exceptional_characterisation}
Let $G$ be an exceptional intersection group. Then one of the following holds:
\begin{enumerate}
\item\label{itm:not_neg} $G$ is a primitive exceptional intersection group and so is $2$-free,
\item\label{itm:has_GBS} there is a two-generator one-relator generalised Baumslag--Solitar subgroup $H<G$ such that every non-free two-generator subgroup of $G$ is conjugate into $H$.
\end{enumerate}
\end{theorem}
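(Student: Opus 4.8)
The plan is to combine Theorem \ref{exceptional_intersection_improved}, which produces a two-generator one-relator subgroup $H = F(a,c)/\normal{r}$ embedded in $G$ via the monomorphism $\bar\iota$ of Corollary \ref{induced_injective}, with the structural dichotomies of Lemmas \ref{first_exceptional_intersection} and \ref{second_exceptional_intersection}. Since $G$ is an exceptional intersection group, we may write $\Sigma = A\sqcup B\sqcup C$ with $\langle A,B\rangle$ and $\langle B,C\rangle$ having exceptional intersection, and Theorem \ref{exceptional_intersection_improved} gives us either the relation $a^m = c^n$ (first case) or $ac^m a^{-1} = c^n$ (second case) inside $H$. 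The whole proof is then a matter of splitting into these two cases and, within each, running down the list of possibilities that the corresponding lemma allows.

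First I would dispose of the situation where $H$ is as tame as possible. In the first case, Lemma \ref{first_exceptional_intersection} says either $r$ is primitive, so $H\isom\Z$, or $H$ is non-cyclic with non-trivial centre. If $r$ is primitive then the relation $a^m = c^n$ records that $w$ itself is (a conjugate of) a primitive word $\pr_{p/q}(x,y)$ with $x = \iota(a)$, $y = \iota(c)$; I would check that the malnormality hypotheses built into the definition of a primitive exceptional intersection word of the first type hold, exactly so as to land in conclusion (\ref{itm:not_neg}). If instead $H$ is non-cyclic with non-trivial centre, then $H$ is a two-generator one-relator generalised Baumslag--Solitar group by Proposition \ref{gbs_relation}, and I would argue that it is the desired $H$ for conclusion (\ref{itm:has_GBS}): every non-free two-generator subgroup of $G$ corresponds to a $w$-subgroup of rank two, and by Lemmas \ref{malnormal_condition_1} and \ref{malnormal_condition_2} (together with the fibre-product/$w$-subgroup analysis already carried out in the proof of Theorem \ref{primitive_negative_immersions}) such a subgroup must be conjugate into $\langle x,y\rangle$, hence into $H$.

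In the second case I would work through the four alternatives of Lemma \ref{second_exceptional_intersection}. The primitive alternative (\ref{itm:primitive}) again produces a primitive exceptional intersection word, now of the second type, with $\iota(a) = xy$ and $\iota(c) = z$; I verify the malnormality of $\langle z\rangle$ and the non-degeneracy condition on $p/q = k$, obtaining conclusion (\ref{itm:not_neg}). The remaining alternatives (\ref{itm:centre}), (\ref{itm:exceptional_BS}) and (\ref{itm:centre_2}) each give, via Proposition \ref{gbs_relation}, a two-generator one-relator generalised Baumslag--Solitar subgroup $H$; in each of these I again argue that $H$ absorbs every non-free two-generator subgroup up to conjugacy, yielding conclusion (\ref{itm:has_GBS}). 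The Baumslag--Solitar case (\ref{itm:exceptional_BS}) is the most transparent since $H\isom\bs(p,q)$ is visibly generalised Baumslag--Solitar.

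The main obstacle, and the step I expect to demand the most care, is proving the universality clause of conclusion (\ref{itm:has_GBS}): that \emph{every} non-free two-generator subgroup of $G$ is conjugate into the single subgroup $H$. The existence of one such $H$ is easy, but showing it swallows all the others requires the $w$-subgroup technology. Concretely, a non-free two-generator subgroup $L<G$ lifts to a rank-two $w$-subgroup of $F(\Sigma)$ (using Theorem \ref{lw_2_free} and the fact that $\pi(w) = 2$ here), and I must show that any such $w$-subgroup is, up to conjugacy, contained in the $w$-subgroup $\langle x,y\rangle$ (resp. $\langle xy,z\rangle$) that determines $H$. This is precisely where the malnormality constraints from the primitive exceptional intersection definitions fail, so that the fibre-product argument of Lemma \ref{rank_1_pullback_2} no longer forces $\pi_1(\Gamma)=\pi_1(\Lambda)^g$; instead one reads off from Lemmas \ref{malnormal_condition_1} and \ref{malnormal_condition_2} that the rank-two overgroup has the special form that pins it inside $H$. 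Making this uniqueness-of-$w$-subgroup statement precise, and confirming that conjugacy into $\langle x,y\rangle$ (resp. $\langle xy,z\rangle$) translates to conjugacy into $H$ under $\bar\iota$, is the crux of the argument.
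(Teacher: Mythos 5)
Your proposal assembles the right ingredients (Theorem \ref{exceptional_intersection_improved}, Lemmas \ref{first_exceptional_intersection} and \ref{second_exceptional_intersection}, Proposition \ref{gbs_relation}, Lemmas \ref{malnormal_condition_1} and \ref{malnormal_condition_2}, and the Louder--Wilton $w$-subgroup machinery), and you correctly identify the universality clause of (\ref{itm:has_GBS}) as the crux. But there is a genuine gap, and it is the same gap in both places where your argument gets hard: you have no mechanism to force the malnormality conditions to hold in the rank-one case, nor to identify your $H$ with the $w$-subgroup in the rank-two case. When $r$ is primitive you say you ``would check that the malnormality hypotheses built into the definition \dots hold.'' They need not hold: $x$ or $y$ (or $z$) can be a proper power, or a conjugate of $\langle x\rangle$ can meet $\langle y\rangle$ non-trivially, and then $w$ is \emph{not} a primitive exceptional intersection word even though $r$ is primitive (Example \ref{not_first_type} is exactly such a word). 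In that situation the correct conclusion is (\ref{itm:has_GBS}), not (\ref{itm:not_neg}), so your case split ``$r$ primitive $\Rightarrow$ conclusion (\ref{itm:not_neg})'' breaks down.

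The paper's fix is a maximality device that your proposal lacks: argue by contraposition (assume $G$ is not a primitive exceptional intersection group) and choose $H$ \emph{maximal} among subgroups of the form produced by Theorem \ref{exceptional_intersection_improved}. Then if $H$ has rank one, each possible failure of malnormality lets one strictly enlarge $H$ --- adjoin a root of $x$ or $y$ (or $z$), or, when $\langle x\rangle^f\cap\langle y\rangle\neq 1$, pass to a subgroup $\langle gh, h^{-1}dh\rangle$ satisfying a relation of the second form --- contradicting maximality. Hence $H$ must have rank two, so $\pi(w)=2$ and $H$ is conjugate into a two-generator $w$-subgroup $K$; maximality together with Lemmas \ref{malnormal_condition_1} and \ref{malnormal_condition_2} then forces $H=K$, and the universality clause is exactly \cite[Corollary 1.10]{louder_21} applied to $K$. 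Without the maximality assumption your appeal to Lemmas \ref{malnormal_condition_1} and \ref{malnormal_condition_2} cannot rule out that $\langle x,y\rangle$ (resp.\ $\langle xy,z\rangle$) is properly contained in a larger rank-two subgroup, so you cannot conclude it is a $w$-subgroup, and the claim that it absorbs every non-free two-generator subgroup does not follow.
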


\begin{proof}
Assume that $G$ is not a primitive exceptional intersection group. Let $H<G \isom F(A, B, C)/\normal{w}$ be the two-generator subgroup from Theorem \ref{exceptional_intersection_improved}. We may assume that $H$ is maximal in the sense that there is no subgroup properly containing $H$ and that is of the same form as the two-generator subgroup from Theorem \ref{exceptional_intersection_improved}. By Proposition \ref{gbs_relation}, $H$ is a generalised Baumslag--Solitar group. Clearly $H$ has rank at most two, so we now consider the two possible cases.

Let us first assume that $H$ has rank one. Thus, $G$ has one of the following presentations:
\begin{enumerate}
\item $G \isom \langle \Sigma \mid \pr_{p/q}(x, y)\rangle$ for some $x\in \langle A, B\rangle - \langle B\rangle$ and $y\in \langle B, C\rangle - \langle B\rangle$,
\item $G\isom \langle \Sigma \mid \pr_{p/q}(xy, z)\rangle$ for some $x\in \langle A, B\rangle - \langle B\rangle$, $y\in \langle B, C\rangle - \langle B\rangle$ and $z\in \langle B\rangle$.
\end{enumerate}
where $H = \langle x, y\rangle$ in the first case, and $H = \langle xy, z\rangle$ in the second case. 

Suppose that we are in the first situation. By Definition \ref{pie_group}, we may assume that either $\{\langle x\rangle, \langle y\rangle\}$ is not a malnormal family (in $F(A, B, C)$), or that $p/q = 1$ and that there exist elements $a\in \langle A, B\rangle - \langle B\rangle$ and $c\in \langle B, C\rangle - \langle B\rangle$ such that $\pr_1(x, y) = \pr_1(a, c)$ and $\{\langle a\rangle, \langle c\rangle\}$ is not a malnormal family (in $F(A, B, C)$). As the two cases are identical, it suffices to assume that $\{\langle x\rangle, \langle y\rangle\}$ is not a malnormal family. Now, if $\{\langle x\rangle, \langle y\rangle\}$ is not a malnormal family, then either $x$ is a proper power, $y$ is a proper power, or a conjugate of $\langle x\rangle$ intersects $\langle y\rangle$ non-trivially. If either $x$ or $y$ is a proper power, by adjoining a root of $x$ or $y$ to $H$, we obtain a contradiction to maximality of $H$. If $\langle x\rangle^f\cap \langle y\rangle \neq 1$ for some $f\in F(\Sigma)$, it follows that there must be elements $g\in \langle A, B\rangle - \langle B\rangle$, $h\in \langle B, C\rangle -\langle B\rangle$, $d\in \langle B\rangle$ such that $\langle x\rangle^g\cap \langle y\rangle^{h^{-1}} < \langle d\rangle<\langle B\rangle$. Now $H$ would be properly contained in $\langle gh, h^{-1}dh\rangle$. However, since $(gh)(h^{-1}dh)^n(gh)^{-1} = (h^{-1}dh)^m$ holds for some $m, n\neq 0$, we obtain a contradiction to maximality of $H$ again.

Now suppose that we are in the second situation. As before, we may assume that $\langle z\rangle$ is not malnormal by Definition \ref{pie_group}. Then $z$ is a proper power and we contradict maximality of $H$ once again. 

We may conclude from the above that $H$ must have rank two. Since $H$ cannot be free, we have that $\pi(w) = 2$ and $G$ has a two-generator $w$-subgroup $K$ into which $H$ is conjugate by \cite[Corollary 1.10]{louder_21}. If $H\neq K$, we get a contradiction to maximality of $H$ by the definition of $w$-subgroups and Lemmas \ref{malnormal_condition_1} and \ref{malnormal_condition_2}. Thus, (\ref{itm:has_GBS}) holds by \cite[Corollary 1.10]{louder_21}.
\end{proof}

\begin{remark}
\label{w_subgroup_remark}
The $w$-subgroup from Theorem \ref{exceptional_characterisation} is always a subgroup of the form given in Theorem \ref{exceptional_intersection_improved}.
\end{remark}

It follows from \cite[Theorem 8.2]{me_22} and Theorem \ref{exceptional_characterisation} that an exceptional intersection group is hyperbolic if and only if it contains no Baumslag--Solitar subgroup. Note that this can also be derived using Corollaries \ref{exceptional_splitting} and \ref{exceptional_splitting_2} and the combination theorem \cite{bestvina_92_combination}. However, in order to prove our main results, we need the stronger dichotomy established in Theorem \ref{exceptional_characterisation}. This dichotomy is harder to establish using Corollaries \ref{exceptional_splitting} and \ref{exceptional_splitting_2} as the splittings do not satisfy the hypothesis of existing $2$-free combination theorems (see \cite{bbaumslag_68} for example).

\begin{example}
We give two examples of groups with exceptional intersection that are not $2$-free. Let $p/q\in \Q_{>0}$ and $n, m\neq 0$. Consider the group with presentation:
\[
G \isom \langle c_0, c_1 \mid \pr_{p/q}(c_0^{m}, c_1^{-n})\rangle~.
\]
The relation $c_0^{qm} = c_1^{pn}$ holds in $G$ and so it has an exceptional intersection of the first type. In \cite{meskin_73}, this group was shown to be isomorphic to a non-cyclic generalised Baumslag--Solitar group with presentation:
\[
\langle c_0, b, c_1 \mid c_0^{m} = b^{p}, b^{q} = c_1^{n}\rangle~,
\]
and so is not $2$-free.

Now consider the HNN-extension:
\begin{align*}
\langle c_0, b, c_1 , a\mid c_0^{m} = b^{p}, b^{q} = c_1^{n}, ac_0a^{-1} = c_1\rangle &\isom \langle c_0, c_1, a \mid ac_0a^{-1} = c_1, \pr_{p/q}(c_0^m, c_1^{-n}) \rangle \\
	&\isom \langle a, c \mid \pr_{p/q}(c^m, a^{-1}c^{-n}a)\rangle
\end{align*}
The relation $a^{-1}c^{mp}a = c^{nq}$ holds in this group and so
\[
\langle a_0, a_1, c \mid \pr_{p/q}(c^m, (a_0a_1)^{-1}c^{-n}(a_0a_1)\rangle
\]
has an exceptional intersection of the second type. This group also contains a non-cyclic generalised Baumslag--Solitar subgroup and so is not $2$-free.
\end{example}

\section{One-relator towers}

A \emph{one-relator complex} is a combinatorial $2$-complex with a single $2$-cell. If $X$ is a one-relator complex, we write $X = (\Gamma, \lambda)$ where $\Gamma$ is a $1$-complex and $\lambda:S^1\immerses \Gamma$ is the attaching map.

If $p:Y\immerses X$ is an infinite cyclic cover of a CW-complex, a \emph{tree domain} for $p$ is a subcomplex $Z\subset Y$ satisfying the following:
\begin{enumerate}
\item $\Z\cdot Z = Y$,
\item for all $k\in \N$ and every cell $c\subset Z$, if $k\cdot c\subset Z$, then $i\cdot c\subset Z$ for all $0\leq i\leq k$,
\item $Z\cap 1\cdot Z$ is connected and non-empty.
\end{enumerate}
By \cite[Proposition 3.10]{me_22}, if the map $\pi_1(Z\cap 1\cdot Z)\to \pi_1(Z)$ induced by inclusion is injective, then we obtain a splitting
\[
\pi_1(X)\isom \pi_1(Z)*_{\pi_1(Z\cap 1\cdot Z)}
\]
By \cite[Proposition 4.7]{me_22}, if $p:Y\immerses X$ is an infinite cyclic cover of a one-relator complex, then there always exists a one-relator tree domain $Z$ for $p$. Moreover, by the Freiheitssatz, the maps $\pi_1(Z\cap 1\cdot Z)\to\pi_1(Z)$ are always injective.

A \emph{one-relator tower} is a sequence of immersions of one-relator complexes 
\[
X_N\immerses ...\immerses X_1\immerses X_0 = X, 
\]
where $X_{i+1}\immerses X_i$ factors as 
\[
X_{i+1}\overset{\iota}{\injects} Y\overset{p}{\immerses} X_i
\]
where $p$ is an infinite cyclic covering map and $\iota$ is an inclusion of a tree domain for $p$. A one-relator tower is \emph{maximal} if $\chi(X_N) = 1$.

By the above, it follows that one-relator hierarchies correspond to iterated HNN-extensions over one-relator groups.

The following is \cite[Proposition 5.1]{me_22} and can be interpreted as a modern version of the well-known Magnus--Moldavanskii hierarchy.

\begin{proposition}
\label{maximal_tower}
Let $X$ be a finite one-relator complex. Then $X$ has a maximal one-relator tower.
\end{proposition}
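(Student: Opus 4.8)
The plan is to build the tower greedily, adjoining one stage at a time with the infinite cyclic cover machinery recalled above, and to force termination by an induction on a well-founded complexity. For a one-relator complex $X=(\Gamma,\lambda)$ I take this complexity to be the lexicographic pair $\bigl(c(X),\rk(\pi_1(\Gamma))\bigr)$, where $c(X)$ is the relator length, that is, the number of $1$-cells traversed by $\lambda$ after collapsing a maximal tree of $\Gamma$ (equivalently, the word length of the defining relator of $\pi_1(X)$ in the resulting free basis). The base case is $\chi(X)=1$: since $\chi(X)=2-\rk(\pi_1(\Gamma))$, this forces $\rk(\pi_1(\Gamma))=1$, so $\Gamma\simeq S^1$ and $\pi_1(X)\isom\langle a\mid a^k\rangle$. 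Here the length-zero tower consisting of $X$ alone already has $\chi=1$ and is maximal, so there is nothing to do.

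Now suppose $\chi(X)\le 0$, equivalently $\rk(\pi_1(\Gamma))\ge 2$. The first point is that an infinite cyclic cover is always available in this range: $H_1(X;\Z)$ then has positive free rank, so there is a primitive homomorphism $\phi\colon\pi_1(X)\to\Z$. Since $w$ is null-homotopic, $\phi$ pulls back to a homomorphism $\pi_1(\Gamma)\to\Z$ vanishing on $w$, and hence determines an infinite cyclic cover $p\colon Y\immerses X$. By \cite[Proposition 4.7]{me_22} this cover admits a one-relator tree domain $Z$; the Freiheitssatz makes the inclusion $\pi_1(Z\cap 1\cdot Z)\to\pi_1(Z)$ injective, so \cite[Proposition 3.10]{me_22} furnishes the splitting $\pi_1(X)\isom\pi_1(Z)*_{\pi_1(Z\cap 1\cdot Z)}$. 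Setting $X_1=Z$ gives the first stage $X_1\injects Y\immerses X$ of a one-relator tower, with $X_1$ again a finite one-relator complex.

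The crux, which I expect to be the main obstacle, is to verify that each stage strictly decreases the lexicographic complexity; this is precisely the Magnus--Moldavanskii reduction expressed in the present language. The mechanism is that the $1$-cells on which $\phi$ is nonzero become level-changing edges in $Y$ and assemble, in a minimal tree domain, into a maximal tree that can be collapsed. If one can arrange $\phi$ to be nonzero on a $1$-cell traversed by $\lambda$ that is not already collapsible in $\Gamma$, then this collapse shortens the relator (a short argument using $\phi(w)=0$, so that level-changing traversals occur in both directions, upgrades the gain to a genuine drop) and $c(X_1)<c(X)$. Otherwise $w$ is carried by the $\phi$-trivial $1$-cells, the level-changing cells are untraversed and are absorbed into the tree, so $c$ is unchanged while $\rk(\pi_1(\Gamma))$ strictly drops, a previously rank-contributing generator having become a tree edge. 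In either case the complexity descends. The delicate part is exactly the bookkeeping that guarantees this descent and rules out any lexicographic increase---in particular exploiting the freedom in the choice of $\phi$---and this is where the argument needs care. Granting it, induction on the lexicographic complexity applies to $X_1$ to yield a maximal tower $X_N\immerses\cdots\immerses X_1$, and prepending the stage $X_1\injects Y\immerses X=X_0$ produces a maximal one-relator tower for $X$.
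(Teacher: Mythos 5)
Your overall strategy---build the tower greedily one tree domain at a time and force termination by induction on a well-founded complexity---is the same as the paper's (the paper quotes the statement from \cite[Proposition 5.1]{me_22} and reproduces the argument in its proof of Proposition \ref{factored_tower}). But the proof has a genuine gap, and you have located it yourself: the entire mathematical content of the proposition is the descent step, and you write ``Granting it, induction \dots applies.'' Nothing before that point is in doubt (indicability for $\chi(X)\le 0$, existence of a tree domain by \cite[Proposition 4.7]{me_22}, injectivity of the edge map by the Freiheitssatz); what is in doubt is exactly the step you defer.

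Worse, the complexity you chose is the one for which the descent actually fails. Taking the primary term to be the relator length after collapsing a spanning tree reproduces the classical Magnus rewriting length, and the classical obstruction is precisely the case where no generator has exponent sum zero: then every $1$-cell traversed by $\lambda$ is level-changing for the (essentially unique primitive) $\phi$, your second case never occurs, and the passage to the tree domain --- the Moldavanskii substitution in this language --- can \emph{lengthen} the relator in your sense while also \emph{increasing} the rank of the vertex graph (the new vertex group is generated by many translates of the old generators). So the lexicographic pair $\bigl(c(X),\rk(\pi_1(\Gamma))\bigr)$ need not drop, and ``exploiting the freedom in the choice of $\phi$'' does not help when that freedom is a single sign. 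There is a further subtlety when $w$ is a proper power, which your count also does not see. The paper's complexity is
\[
c(X) = \left(\frac{\abs{\lambda}}{\deg(\lambda)} - \abs{X_{\lambda}^{(0)}},\ -\chi(X)\right),
\]
and the point of the subtracted term is that the lifted relator has the same length as $\lambda$ but visits strictly more vertices of the smallest one-relator subcomplex, so the first coordinate drops even when the naive relator length goes up; the division by $\deg(\lambda)$ handles proper powers. The descent $c(Z) < c(X)$ for a suitable tree domain is then the content of \cite[Proposition 4.9]{me_22}, which is the lemma your argument is missing. To repair your proof you would either need to prove that statement or replace your complexity by one for which you can actually verify strict decrease in the exponent-sum-nonzero case.
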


If $X = (\Gamma, \lambda)$ is a one-relator complex, then for each $w$-subgroup $K<\pi_1(\Gamma)$, where $w = [\lambda]$, there is an immersion of one-relator complexes $Q = (\Omega, \omega)\immerses X$ where $\Omega\immerses \Gamma$ is the core graph immersion representing $K$. We say that $Q\immerses X$ represents a $w$-subgroup. The following is \cite[Theorem 5.15]{me_22}.

\begin{theorem}
\label{relative_hierarchy}
Let $X$ be a one-relator complex and let $Q\immerses X$ be an immersion of a one-relator complex, representing some $w$-subgroup. There exists a one-relator tower
\[
Q = X_K\immerses...\immerses X_1\immerses X_0 = X.
\]
\end{theorem}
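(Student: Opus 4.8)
The plan is to argue by induction on a complexity of $X$ that strictly decreases along tower steps --- essentially the length of the relator read off $\lambda$, the same quantity used in the proof of Proposition \ref{maximal_tower} to guarantee that maximal towers terminate. The base case is when $Q\immerses X$ is an isomorphism, where the required tower has length zero. For the inductive step, I would first produce a single one-relator tower step $X_1\immerses X$ and then show that the given immersion factors as $Q\immerses X_1\immerses X$ with $Q\immerses X_1$ \emph{again} representing a $w$-subgroup; applying the inductive hypothesis to $Q\immerses X_1$ then supplies the remainder of the tower, which we prepend with $X_1\immerses X$.

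To build the single step I would invoke the infinite cyclic cover and tree domain machinery from \cite{me_22} underlying Proposition \ref{maximal_tower}: choose a homomorphism $\phi:\pi_1(\Gamma)\to\Z$ with $\phi(w)=0$, form the associated infinite cyclic cover $p:Y\immerses X$, and choose a tree domain $Z\subset Y$ containing the lift of the $2$-cell. Then $X_1=Z=(\Gamma_1,\lambda_1)$ is a one-relator complex of strictly smaller complexity and $\pi_1(X)\isom \pi_1(Z)*_{\pi_1(Z\cap 1\cdot Z)}$, the edge maps being injective by the Freiheitssatz. Because $\phi(w)=0$, the relator $w$ lifts to a loop and, by construction of the tree domain, lies in the vertex group $\pi_1(Z)$.

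The heart of the matter is to guarantee that $Q$ descends into $X_1$. Writing $K=\pi_1(Q)$ and regarding $K<\pi_1(X)$ via $Q\immerses X$, I need $K$ to be elliptic for the HNN splitting above, i.e. conjugate into the vertex group $\pi_1(Z)$; the immersion $Q\immerses X_1$ of core graphs then exists by Stallings' correspondence \cite{sta_83}, carrying the $2$-cell of $Q$ to that of $X_1$. The relator $w\in K$ is elliptic, but this alone does not force $K$ to be elliptic, and this is the step I expect to be the main obstacle. To resolve it I would use that $K$ is a $w$-subgroup in the strong sense of \cite{louder_21}: $\rk(K)=\pi(w)$, the element $w$ is imprimitive in $K$, and $K$ is minimal among subgroups with these rank properties. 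A nontrivial action of $K$ on the Bass--Serre tree would, by Bass--Serre theory, induce a nontrivial graph-of-groups decomposition of $K$ in which $w$ is elliptic, hence contained in a proper subgroup $K_v<K$ arising as a vertex group. Either $w$ is imprimitive in $K_v$, giving $\pi(w)\leq\rk(K_v)<\pi(w)$, or a Grushko/Euler-characteristic count on the induced splitting upgrades $K_v$ to a strictly smaller subgroup still witnessing the imprimitivity of $w$; in either case one contradicts the defining properties of a $w$-subgroup, so $K$ must be elliptic.

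Finally I would verify that $Q\immerses X_1$ represents a $w_1$-subgroup, where $w_1=[\lambda_1]$ is the rewritten relator carried by $X_1$. The image of $K$ in $\pi_1(\Gamma_1)$ contains $w_1$ imprimitively and has rank $\pi(w)$, so it remains to check that the primitivity rank and the minimality clause in the definition of a $w$-subgroup are preserved by this hierarchy move, yielding $\rk(K)=\pi(w_1)$ together with the remaining $w_1$-subgroup conditions. With this the induction closes. The conversion of the purely combinatorial definition of $w$-subgroups into the Bass--Serre-theoretic ellipticity statement of the third paragraph is the delicate point of the argument, since it is there that the rigidity encoded in \cite{louder_21} must be transported across a one-relator splitting.
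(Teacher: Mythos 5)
The paper does not prove this statement at all: it is imported verbatim as \cite[Theorem 5.15]{me_22}, so there is no in-paper argument to compare against and your proposal must stand on its own. Its skeleton (factor $Q\immerses X$ through a single tower step, check that $Q$ still represents a $w$-subgroup, induct on complexity) is the right shape, but the step you yourself flag as the main obstacle is resolved incorrectly, and the statement you try to prove there is in fact false. You take an \emph{arbitrary} epimorphism $\phi\colon\pi_1(\Gamma)\to\Z$ with $\phi(w)=0$ and argue that the $w$-subgroup $K$ must be elliptic in the resulting splitting. Since every vertex group of that splitting lies in $\ker\phi$, ellipticity forces $\phi(K)=0$, which need not hold: take $\Gamma$ a rose on $\{a,b,c\}$ and $w=[a,b]$, so that $K=\langle a,b\rangle$ is the $w$-subgroup and $Q$ is the torus; the epimorphism $\phi(a)=1$, $\phi(b)=\phi(c)=0$ kills $w$ but not $K$, and $Q\immerses X$ does not factor through the associated tree domain. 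Your Bass--Serre argument fails to detect this because the vertex groups $K_v$ of the induced decomposition of $K$ (really, of its image in $\pi_1(X)$, which you conflate with $K<\pi_1(\Gamma)$ --- the factorisation of graph immersions is a statement about subgroups of the free group, not of the quotient) need not be finitely generated: in the example $K_v$ is the infinite-rank preimage of $\langle b\rangle$ under $F(a,b)\to\Z^2$, so the inequality $\pi(w)\le\rk(K_v)<\pi(w)$ is unavailable, and no Grushko or Euler-characteristic count applies since the edge groups are neither trivial nor finitely generated.

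The missing idea is that the infinite cyclic cover must be chosen \emph{adapted to} $Q$: you need $\phi$ to vanish on all of $K$, not merely on $w$. Such a $\phi$ exists whenever $Q\neq X$, because the maximality clause in the definition of a $w$-subgroup, applied to $K<\pi_1(\Gamma)$, gives $\rk(K)<\rk(\pi_1(\Gamma))$, so the image of $K$ in $H_1(\Gamma;\Q)$ spans a proper subspace and some surjection $\pi_1(\Gamma)\to\Z$ kills it. One must then also choose the tree domain $Z$ large enough to contain a translate of the core graph of $K$, and verify the point raised in your last paragraph, namely that $Q\immerses X_1$ again represents a $w_1$-subgroup; that verification does go through, since $w_1$ is a conjugate of $w$ inside $\ker\phi$ and any subgroup of $\pi_1(Z^{(1)})$ containing it imprimitively is such a subgroup of $\pi_1(\Gamma)$, while the maximality clause for $K$ in $\pi_1(\Gamma)$ restricts to $\pi_1(Z^{(1)})$. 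With the adapted choice of $\phi$ your induction closes; without it, the single tower step you construct need not admit the required factorisation at all.
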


Equipped with Theroem \ref{relative_hierarchy}, we make the following definition.

\begin{definition}
We say a one-relator tower $X_N\immerses...\immerses X_0 = X$ is \emph{factored} if for every immersion $Q\immerses X_i$ representing a $w$-subgroup of $\pi_1(X_i)$ with $\chi(Q) = 0$, either there is some $j\geq 0$ such that $X_{i+j} = Q\immerses X_i$, or $Q\immerses X_i$ factors through $X_N\immerses X_i$.
\end{definition}

The proof of the following is essentially identical to that of Proposition \ref{maximal_tower}, but we include it for completeness.

\begin{proposition}
\label{factored_tower}
Every one-relator complex has a maximal factored one-relator tower.
\end{proposition}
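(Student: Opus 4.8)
The plan is to run the proof of Proposition \ref{maximal_tower} essentially unchanged --- building the tower by iterating the infinite cyclic cover and tree domain step and inducting on the same complexity measure, which strictly decreases at every step and is minimised exactly when $\chi = 1$ --- while inserting, at each stage, the relative hierarchies supplied by Theorem \ref{relative_hierarchy} in order to force the tower through the relevant rank-two $w$-subgroups. For the base case, if $\chi(X) = 1$ then $[\lambda]$ is a power of a single generator, so $\pi([\lambda]) \neq 2$; hence there are no $w$-subgroups $K < \pi_1(\Gamma)$ of rank two, no immersions $Q \immerses X$ with $\chi(Q) = 0$ to verify, and the length-zero tower is vacuously maximal and factored.

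For the inductive step, suppose $\chi(X) < 1$. The factored condition at the top level $X_0 = X$ quantifies only over immersions $Q \immerses X$ with $\chi(Q) = 0$, that is, over rank-two $w$-subgroups $K < \pi_1(\Gamma)$, and these exist only when $\pi([\lambda]) = 2$. If $\pi([\lambda]) > 2$ the condition at the top level is vacuous and any first step as produced in Proposition \ref{maximal_tower} will do. If $\pi([\lambda]) = 2$, I would split the rank-two $w$-subgroups into the whole group $\pi_1(\Gamma)$ --- which is a $w$-subgroup precisely when $\chi(X) = 0$, is represented by the identity $X_0 = X$, and is therefore discharged by taking $j = 0$ --- and the proper ones, which can occur only when $\chi(X) < 0$, since a proper rank-two subgroup of a rank-two free group is of finite index and so violates the maximality clause in the definition of a $w$-subgroup. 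For each proper such $Q$, Theorem \ref{relative_hierarchy} furnishes a one-relator tower $Q = X_K \immerses \cdots \immerses X_0 = X$, which I splice in so that $Q$ literally occurs as some complex $X_{i+j}$, discharging the first clause of the definition. Each step of a one-relator tower reduces complexity, so the process still terminates, and applying the inductive hypothesis below the first complex $X_1$ (of strictly smaller complexity) produces a factored tower at all lower levels.

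The main obstacle is coordinating this over all of the proper rank-two $w$-subgroups $Q_1, \ldots, Q_m$ of a single level at once; there are only finitely many, as each is represented by an immersion of bounded size into the finite graph $\Gamma$. The tower can be made to pass literally through some of them, but every $Q_l$ that is not placed in the tower must instead satisfy the second clause, that the terminal complex $X_N \immerses X_i$ factors through $Q_l \immerses X_i$. Since $\chi(X_N) = 1$ while $\chi(Q_l) = 0$, this can only mean that $X_N \immerses X_i$ factors as $X_N \immerses Q_l \immerses X_i$, that is, the terminal cyclic subgroup $\pi_1(X_N)$ is conjugate into $\pi_1(Q_l)$. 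Arranging the descent so that this holds simultaneously for every omitted $Q_l$ is where the real content lies; I expect to control it by choosing which $w$-subgroup to route through using the compatibility of the $w$-subgroup immersions with the cyclic-cover and tree-domain decomposition, exactly paralleling the way maximality is propagated down the tower in Proposition \ref{maximal_tower}. Once the top level is secured in this way, factoredness at every lower level is inherited directly from the inductive hypothesis.
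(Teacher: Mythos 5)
Your overall strategy is the same as the paper's: induct on the complexity used for Proposition \ref{maximal_tower}, dispatch the cases where no level can carry a rank-two $w$-subgroup ($2$-free or torsion), splice in the relative towers of Theorem \ref{relative_hierarchy} to route through proper rank-two $w$-subgroups, and in the residual case $\chi(X)=0$ take an infinite cyclic cover and a tree domain of strictly smaller complexity and recurse. (One small point: an arbitrary tree domain need not decrease the first coordinate of the complexity; the paper invokes Proposition 4.9 of \cite{me_22} to choose one that does.) The paper's proof is in fact terser than your outline: it notes that $c(Q)\leq c(X)$ for any $w$-subgroup representative $Q\immerses X$ with equality only when $Q=X$, routes through a single proper $Q$ via Theorem \ref{relative_hierarchy}, and applies the inductive hypothesis to $Q$; it does not attempt the simultaneous coordination over $Q_1,\dots,Q_m$ that you single out as ``the real content,'' so that difficulty is left implicit there as well. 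What would actually resolve it is that each level admits at most one conjugacy class of rank-two $w$-subgroups (this is forced in any maximal factored tower: two such levels would be nested, contradicting condition (3) in the definition of a $w$-subgroup), and neither your write-up nor the paper's argues this point.

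Two concrete slips. First, you have reversed the direction of ``factors through'': the definition asks that $Q\immerses X_i$ factor as $Q\to X_N\immerses X_i$, i.e.\ that $\pi_1(Q)$ be conjugate into $\pi_1(X_N)$, not the other way around. Since $\chi(X_N)=1$ forces the underlying graph of $X_N$ to have rank one while the core graph of $Q$ has rank two, this clause can never hold in a maximal tower; so your proposed mechanism for discharging the omitted $Q_l$ (making $\pi_1(X_N)$ conjugate into $\pi_1(Q_l)$) does not match the definition and cannot close the gap --- every rank-two $w$-subgroup of every level must literally occur as a level. Second, a proper rank-two subgroup of a rank-two free group has \emph{infinite} index (an index-$n$ subgroup has rank $n+1$); such a subgroup fails to be a $w$-subgroup simply because it is properly contained in the ambient group of the same rank, violating condition (3). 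Your conclusion there is correct, but for the wrong reason.
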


\begin{proof}
Let $X = (\Gamma, \lambda)$ be a one-relator complex. Denote by $\deg(\lambda)$ the largest degree covering map $S^1\immerses S^1$ that $\lambda$ factors through and denote by $X_{\lambda}$ the smallest one-relator subcomplex of $X$. Then define the quantity
\[
c(X) = \left(\frac{\abs{\lambda}}{\deg(\lambda)} - \abs{X_{\lambda}^{(0)}}, -\chi(X)\right).
\]
The proof proceeds by induction on $c(X)$. 

If $\pi_1(X)$ is $2$-free or has torsion, then any maximal one-relator tower is a factored tower by definition and so the result follows from Proposition \ref{maximal_tower}. If $Q\immerses X$ represents a $w$-subgroup, then it is clear that we must have $c(Q)\leq c(X)$ with equality if and only if $Q = X$. Thus, by Theorem \ref{relative_hierarchy}, we may assume that $\chi(X) = 0$. Then as $\chi(X)\neq 1$, $\pi_1(X)$ is indicable and there is some infinite cyclic cover $p:Y\immerses X$. By \cite[Proposition 4.9]{me_22}, there is some one-relator tree domain $Z$ for $p$ such that $c(Z)<c(X)$. Hence, by induction, the proof is complete.
\end{proof}

\subsection{Acylindrical, quasi-convex and stable one-relator towers}

Let $X_K\immerses...\immerses X_1\immerses X_0$ be a one-relator tower. Denote by $T_i$ the Bass-Serre tree associated with the splitting $\pi_1(X_i) \isom \pi_1(X_{i+1})*_{\psi_i}$. We call this an \emph{acylindrical one-relator tower} if there is some constant $k\geq 0$ such that the stabilisers of segments of length $k$ in $T_i$ are finite. We call it a \emph{quasi-convex one-relator tower} if the inclusions $\tilde{A}_{i+1}, \tilde{B}_{i+1}\injects \tilde{X_i}$ are quasi-isometric embeddings, where $A_{i+1} = X_{i+1}\cap 1\cdot X_{i+1}$ and $B_{i+1} = -1\cdot X_{i+1}\cap X_{i+1}$. This last definition is due to Wise \cite{wise_21_quasiconvex}, adapted to the one-relator case. In \cite{me_22}, a \emph{stable one-relator tower} is defined in terms of the identifying homomorphisms $\psi_i$. Since we shall not need this definition, we instead record the following, which is a reformulation of \cite[Lemma 7.4]{me_22}.

\begin{lemma}
A one-relator tower $X_N\immerses...\immerses X_1\immerses X_0$ is stable if and only if there is some $k\geq 0$ such that the pointwise stabilisers of segments of length $k$ in $T_i$ have rank at most one.
\end{lemma}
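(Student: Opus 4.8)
The plan is to prove both directions by unwinding the definition of a \emph{stable} one-relator tower from \cite{me_22} in terms of the identifying homomorphisms $\psi_i$ and translating it into the Bass--Serre language. Recall that each level of the tower gives a splitting $\pi_1(X_i)\isom \pi_1(X_{i+1})*_{\psi_i}$, an HNN-extension whose edge groups are the two Magnus-type subgroups $\pi_1(A_{i+1})$ and $\pi_1(B_{i+1})$, where $A_{i+1}=X_{i+1}\cap 1\cdot X_{i+1}$ and $B_{i+1}=-1\cdot X_{i+1}\cap X_{i+1}$, and where $\psi_i$ identifies these two edge groups inside $\pi_1(X_{i+1})$. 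The key observation I would exploit is that a pointwise stabiliser of a segment of length $k$ in the Bass--Serre tree $T_i$ is exactly the intersection of $k$ successive conjugated edge groups, and that these intersections are governed by how deeply the identifying homomorphisms $\psi_i$ can be iterated before the image stabilises. So the main task is to match the combinatorial ``stability'' condition on the $\psi_i$ with a rank bound on these iterated intersections.

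First I would fix $i$ and analyse the segments of $T_i$ directly. A segment of length $k$ based at the vertex stabilised by $\pi_1(X_{i+1})$ corresponds to a chain of edges, and its pointwise stabiliser is a subgroup of $\pi_1(X_{i+1})$ obtained by intersecting translates of the edge subgroups under successive applications of $\psi_i$ and its inverse. Concretely, I would show that the pointwise stabiliser of a length-$k$ segment is conjugate into a group of the form $\bigcap_{j} \psi_i^{\,\epsilon_j}(\text{edge group})$, so that bounding its rank by one is equivalent to saying that after at most $k$ steps the nested images of the edge groups under $\psi_i$ fail to produce new rank. This is precisely the content of the reformulation: the original definition in \cite{me_22} phrases stability as a condition that the $\psi_i$ eventually identify edge groups ``rigidly,'' i.e. the iterated images have rank bounded by one, and the present lemma only rephrases the same condition geometrically.

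For the forward direction I would assume the tower is stable, invoke \cite[Lemma 7.4]{me_22} to extract the uniform bound built into the definition of stability on the identifying homomorphisms, and then read off the constant $k$ for which the length-$k$ segment stabilisers have rank at most one; this is a direct translation, using the edge-group description above. For the converse I would assume the existence of such a $k$ and run the same correspondence backwards, showing that the rank-at-most-one bound on segment stabilisers forces the $\psi_i$ to satisfy exactly the stability condition of \cite{me_22}. Since the lemma is explicitly labelled a reformulation of \cite[Lemma 7.4]{me_22}, the substance of the argument is already contained in that cited result, and the work here is bookkeeping: identifying pointwise segment stabilisers with iterated edge-group intersections and verifying that the rank bound corresponds term-by-term to the stability hypothesis.

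The step I expect to be the main obstacle is the precise identification of the pointwise stabiliser of a length-$k$ segment with the correct iterated intersection of edge groups under $\psi_i$, keeping careful track of conjugacy and orientation (the asymmetry between $A_{i+1}$ and $B_{i+1}$, i.e. between $\psi_i$ and $\psi_i^{-1}$). Once that dictionary is set up correctly, both implications follow formally from \cite[Lemma 7.4]{me_22}, so the only real care needed is in the combinatorics of which conjugates of which edge groups appear as one walks along a segment in $T_i$.
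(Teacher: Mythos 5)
Your proposal matches the paper's approach: the paper offers no proof of this lemma, recording it only as a reformulation of \cite[Lemma 7.4]{me_22}, and your plan --- identifying the pointwise stabiliser of a length-$k$ segment in $T_i$ with an iterated intersection of conjugated edge groups and matching the rank-at-most-one bound against the stability condition on the identifying homomorphisms $\psi_i$ --- is precisely the Bass--Serre translation that reformulation rests on. No further argument is given or needed in the paper beyond this dictionary.
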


In \cite{me_22}, the author established a connection between these three types of one-relator towers. The aim of this section is to improve on that result.

\subsection{Primitive extension complexes}

A one-relator complex $X$ is a \emph{primitive extension complex} if $\chi(X) = 0$ and there is a one-relator tower $Z\immerses X$ where $\pi_1(Z)$ is $2$-free and such that
\[
\pi_1(Z\cap 1\cdot Z)\cap \pi_1(Z\cap -1\cdot Z)\neq \pi_1(-1\cdot Z\cap Z\cap 1\cdot Z),
\]
after possibly adding edges to $Z$ and extending the $\Z$-action so that $-1\cdot Z\cap Z\cap 1\cdot Z$ is connected. By Theorem \ref{exceptional_characterisation}, we can see that $\pi_1(Z)*F$ is a primitive exceptional intersection group for some finitely generated free group $F$.

Let $X = (\Gamma, \lambda)$ be a one-relator complex with $\chi(X) = 0$. Let $T\subset \Gamma$ be a spanning tree and let $\langle a, b \mid w\rangle$ be the induced one-relator presentation. If $a^m = b^n$ for some $m, n\in \Z-\{0\}$ and $\pi_1(X)$ is not cyclic, then we call $X$ a \emph{powered one-relator complex}. By Proposition \ref{gbs_relation}, $\pi_1(X)$ is a generalised Baumslag--Solitar group. Moreover, we have the following.

\begin{lemma}
\label{powered_lemma}
Powered one-relator complexes are primitive extension complexes.
\end{lemma}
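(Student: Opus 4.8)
The plan is to locate, inside a maximal factored one-relator tower over $X$, the precise level at which $2$-freeness breaks down, and to show that this level is a tree domain exhibiting an exceptional intersection. First I would record the structural input. Since $X$ is powered, $\chi(X) = 0$ and the presentation $\langle a, b \mid w\rangle$ induced by a spanning tree satisfies $a^m = b^n$ for some $m, n\neq 0$, with $\pi_1(X)$ non-cyclic. By Proposition \ref{gbs_relation}, $\pi_1(X)$ is a non-cyclic generalised Baumslag--Solitar group; in particular it is a two-generator group that is not free, hence not $2$-free (equivalently $\pi(w) = 2$ by Theorem \ref{lw_2_free}). Moreover $\pi_1(X)$ is an exceptional intersection group of the first type, witnessed by $A = \{a\}$, $B = \emptyset$, $C = \{b\}$, since $\langle a\rangle\cap\langle b\rangle \supseteq \langle a^m\rangle \neq 1$; so by Corollary \ref{torsion_free_exceptional} it is torsion-free.

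Next I would apply Proposition \ref{factored_tower} to fix a maximal factored one-relator tower $X_N\immerses\cdots\immerses X_0 = X$. Each vertex group embeds as the vertex subgroup of the HNN splitting $\pi_1(X_i)\isom \pi_1(X_{i+1})*_{\psi_i}$, giving a chain of subgroups $\pi_1(X_N)\leq\cdots\leq \pi_1(X_0) = \pi_1(X)$; since being $2$-free is inherited by subgroups, $2$-freeness is monotone along the tower. As $\chi(X_N) = 1$, the top group $\pi_1(X_N)$ is free and hence $2$-free, whereas $\pi_1(X_0)$ is not. Thus there is a well-defined index $j$ with $\pi_1(X_{j+1})$ $2$-free and $\pi_1(X_j)$ not $2$-free. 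I would set $Z = X_{j+1}$; then $Z\immerses X$ is a one-relator tower with $\pi_1(Z)$ $2$-free, and $Z$ is a tree domain for the infinite cyclic cover $Y$ with $X_j = Y/\Z$, the edge spaces being $Z\cap 1\cdot Z$ and $Z\cap -1\cdot Z$. After possibly adding edges to $Z$ and extending the $\Z$-action I may assume $-1\cdot Z\cap Z\cap 1\cdot Z$ is connected, as the definition of a primitive extension complex permits.

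It remains to verify the exceptional intersection condition $\pi_1(Z\cap 1\cdot Z)\cap \pi_1(Z\cap -1\cdot Z)\neq \pi_1(-1\cdot Z\cap Z\cap 1\cdot Z)$, that is, that the two Magnus subgroups of $\pi_1(Z)$ serving as edge groups of $\pi_1(X_j)\isom \pi_1(Z)*_{\psi_j}$ have exceptional intersection. This is the heart of the matter and the step I expect to be the main obstacle. Because $\pi_1(X_j)$ fails to be $2$-free while remaining torsion-free, it contains a non-free two-generator subgroup $L$ (the image of a rank-two $w_j$-subgroup, where $w_j$ is the relator of $X_j$). Since the vertex group $\pi_1(Z)$ is $2$-free, $L$ is not conjugate into it; analysing $L$ against the Bass--Serre tree of the splitting therefore forces the two edge groups to meet in strictly more than the image of the triple intersection $\pi_1(-1\cdot Z\cap Z\cap 1\cdot Z)$ — that is, an exceptional intersection. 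This is the connection between failure of $2$-freeness and exceptional intersection along one-relator splittings established in \cite{me_22}; in the powered case the resulting intersection is of the first type, inherited from the relation $a^m = b^n$ and the central element $a^m$ it produces.

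Finally, $\pi_1(Z)$ is then a $2$-free one-relator group with an exceptional intersection, so by the first case of Theorem \ref{exceptional_characterisation} it is a primitive exceptional intersection group; absorbing the free factor $F$ contributed by the edges added to $Z$, the group $\pi_1(Z)*F$ is a primitive exceptional intersection group. Hence the tower $Z\immerses X$ witnesses $X$ as a primitive extension complex, which completes the proof.
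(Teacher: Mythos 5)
Your reduction to a critical level $j$ in a maximal factored tower is sound as far as it goes, but the step you yourself flag as ``the heart of the matter'' is a genuine gap, and it is not one you can close by a vague appeal to the machinery of \cite{me_22}. You assert that, because $\pi_1(X_j)$ fails to be $2$-free while the vertex group $\pi_1(X_{j+1})$ is $2$-free, the two Magnus edge groups of the splitting $\pi_1(X_j)\isom\pi_1(X_{j+1})*_{\psi_j}$ must have exceptional intersection. No such implication is stated or proved anywhere in the paper: the result actually used in Proposition \ref{pe_hierarchy} (namely \cite[Lemma 7.8]{me_22}) ties exceptional intersection to the failure of \emph{stability} of the tower, not to the failure of $2$-freeness, and the paper explicitly remarks after Theorem \ref{exceptional_characterisation} that the relevant splittings do not satisfy the hypotheses of existing $2$-free combination theorems. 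There is also a circularity risk: Proposition \ref{pe_hierarchy} and Corollary \ref{pe_corollary} reduce the general situation to the dichotomy ``primitive extension complex or powered complex'', and Lemma \ref{powered_lemma} is precisely what is needed to fold the powered case into the primitive extension case; invoking that machinery to handle a powered complex comes close to assuming what is to be proved. Note finally that your argument never uses the defining relation $a^m=b^n$ beyond observing that $\pi_1(X)$ is not $2$-free, which should be a warning sign.

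The paper's proof is much shorter and exploits the powered structure directly. Since $a^m=b^n$ holds, the abelianisation of $\pi_1(X)$ has rank one, so there is a unique epimorphism $\phi\colon\pi_1(X)\to\Z$ up to sign, and by Murasugi's theorem \cite{murasugi_64} its kernel is finitely generated and free. Taking $Z$ to be a one-relator tree domain in the corresponding cyclic cover $Y$, finite generation of $\pi_1(Y)$ forces $\pi_1(Z\cap 1\cdot Z)=\pi_1(Z)=\pi_1(-1\cdot Z\cap Z)$; in particular $\pi_1(Z)$ is free (hence $2$-free) and the intersection of the two edge groups is all of $\pi_1(Z)$, which properly contains the Magnus subgroup $\pi_1(-1\cdot Z\cap Z\cap 1\cdot Z)$. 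The exceptional intersection condition is thus immediate from a one-step tower, with no case analysis, no search for a critical level, and no appeal to Theorem \ref{exceptional_characterisation}.
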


\begin{proof}
Let $X = (\Gamma, \lambda)$ be a powered one-relator complex. Then there is a spanning tree in $\Gamma$ such that $a^m = b^n$ where $\langle a, b\mid w\rangle$ is the induced presentation for $\pi_1(X)$. By virtue of the fact that $a^m = b^n$, there is only a single epimorphism $\phi:\pi_1(X)\to \Z$ and by \cite{murasugi_64}, we have that $\ker(\phi)$ is finitely generated and free. Now let $p:Y\immerses X$ be the induced cyclic cover and $Z\subset Y$ a one-relator tree domain. Since $\pi_1(Y)$ is finitely generated, this in particular implies that 
\[
\pi_1(Z\cap 1\cdot Z) = \pi_1(Z) = \pi_1(-1\cdot Z\cap Z)
\]
and so that $X$ is a primitive extension complex.
\end{proof}

It turns out that primitive extension complexes obstruct stable one-relator towers.

\begin{proposition}
\label{pe_hierarchy}
Let $X$ be a one-relator complex and $X_N\immerses ...\immerses X_1\immerses X_0 = X$ a factored one-relator tower. Then one of the following holds:
\begin{enumerate}
\item $X_N\immerses ...\immerses X_1\immerses X_0 = X$ is stable,
\item $X_K\immerses ...\immerses X_1\immerses X_0 = X$ is stable and $X_K$ is a primitive extension complex for some $K\leq N$,
\item $X_K$ is a powered one-relator complex for some $K\leq N$.
\end{enumerate}
\end{proposition}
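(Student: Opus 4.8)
The plan is to prove Proposition~\ref{pe_hierarchy} by induction down the given factored tower, using the reformulation of stability recorded just before the statement: a tower is stable if and only if there is some $k\geq 0$ such that the pointwise stabilisers of length-$k$ segments in each $T_i$ have rank at most one. The failure of stability at a given level $X_i$ therefore means that, for every $k$, some length-$k$ segment of $T_i$ has a pointwise stabiliser of rank at least two, i.e.\ the edge-group intersections along arbitrarily long chains contain a rank-two subgroup. The strategy is to locate the \emph{lowest} level $X_K$ at which stability first fails and to show that this failure forces $X_K$ to be either a primitive extension complex or a powered one-relator complex.

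First I would dispose of the easy endpoints. If $\pi_1(X_N)$ is $2$-free or has torsion, the whole tower is stable by definition, giving conclusion~(1). Otherwise, descend the tower and let $K$ be the largest index such that $X_{K+1}\immerses\cdots\immerses X_0$ is stable but $X_K\immerses X_{K+1}\immerses\cdots\immerses X_0$ is not; concretely, the segment-stabiliser rank bound holds for all $T_i$ with $i\geq K+1$ but fails for $T_K$. By the stability reformulation, failure at level $K$ produces, for each $k$, a rank-two subgroup fixing a length-$k$ segment of $T_K$. Such a fixed segment corresponds to a chain of conjugates of the edge groups of the splitting $\pi_1(X_K)\cong\pi_1(X_{K+1})*_{\psi_K}$, and a rank-two group lying in the intersection of two consecutive edge groups is exactly a rank-two subgroup of the form $\pi_1(Z\cap 1\cdot Z)\cap\pi_1(Z\cap -1\cdot Z)$ strictly larger than $\pi_1(-1\cdot Z\cap Z\cap 1\cdot Z)$. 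This is the exceptional-intersection phenomenon: after possibly adding edges to $Z$ to make the triple overlap connected, the two edge groups are Magnus subgroups whose intersection exceeds the intersection of their generating sets.

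At this point I would invoke Theorem~\ref{exceptional_characterisation}. The rank-two overlap means $\pi_1(Z)*F$ (for a suitable free $F$) is an exceptional intersection group, so one of the two alternatives of Theorem~\ref{exceptional_characterisation} applies. If it is $2$-free (alternative~\ref{itm:not_neg}), then $\pi_1(Z)$ is $2$-free and $X_K$ satisfies the definition of a primitive extension complex, yielding conclusion~(2); here I also need that everything below level $K$ remains stable, which is exactly how $K$ was chosen. If instead alternative~\ref{itm:has_GBS} holds, there is a two-generator one-relator generalised Baumslag--Solitar $w$-subgroup; by Remark~\ref{w_subgroup_remark} this is a subgroup of the form in Theorem~\ref{exceptional_intersection_improved}, so after choosing a spanning tree one reads off a relation $a^m=b^n$ (first type) in $\pi_1$ of the corresponding one-relator subcomplex, making it a powered one-relator complex and giving conclusion~(3). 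To keep the induction honest, I would run the argument along the factored tower so that whenever a $w$-subgroup with $\chi=0$ appears it either already sits as a later term $X_{K+j}$ or factors through $X_N$, which is precisely what the factored hypothesis guarantees and what lets me assume the exceptional overlap is detected at the level $X_K$ rather than hidden deeper.

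The main obstacle I anticipate is the bookkeeping that translates ``pointwise stabiliser of a length-$k$ segment has rank $\geq 2$ for all $k$'' into a genuine exceptional intersection at a \emph{single} vertex group $\pi_1(X_K)$, with the triple overlap connected. The subtlety is that a rank-two segment stabiliser a priori only says that long chains of edge-group conjugates have large common intersection; I must rule out the possibility that this arises from a longer-range phenomenon that is nonetheless stable at each individual splitting, and conversely extract from the chain a pair of consecutive edge groups (i.e.\ $\pi_1(Z\cap 1\cdot Z)$ and $\pi_1(Z\cap -1\cdot Z)$) whose intersection is genuinely larger than the triple overlap. This is where the factored hypothesis does the real work: it forces any rank-zero-Euler-characteristic $w$-subgroup to appear as a tower term or to factor through $X_N$, so a persistent rank-two segment stabiliser cannot ``escape'' and must manifest as an exceptional intersection exactly at the first unstable level, after adding the finitely many edges to $Z$ needed to connect $-1\cdot Z\cap Z\cap 1\cdot Z$.
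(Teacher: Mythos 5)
Your overall strategy --- locate the first unstable splitting, convert the instability into an exceptional intersection of the two edge (Magnus) subgroups of the vertex group, and feed this into Theorem \ref{exceptional_characterisation} to land in conclusion (2) or (3) --- is the same as the paper's. However, the step you yourself flag as the main obstacle is a genuine gap, and your proposed fix for it is wrong. The passage from ``for every $k$ some length-$k$ segment of $T_K$ has a rank-two pointwise stabiliser'' to ``$\pi_1(A)\cap\pi_1(B)\neq\pi_1(A\cap B)$ for the two edge groups of the single splitting $\pi_1(X_K)\isom\pi_1(X_{K+1})*_{\psi_K}$'' is not a consequence of the factored hypothesis; that hypothesis only controls which $w$-subgroups appear as tower terms and plays no role in this translation. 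The paper obtains the implication from \cite[Lemma 7.8]{me_22}, applied to one splitting at a time (the induction is run one level at a time from the bottom, so there is no ``long-range'' phenomenon to rule out). Without citing or reproving that lemma your argument does not get off the ground. Relatedly, your choice of $K$ is the first failure counting from the top (``the bound holds for all $T_i$ with $i\geq K+1$ but fails for $T_K$''), whereas conclusion (2) requires $X_K\immerses\cdots\immerses X_0$ to be stable, i.e.\ control of $T_0,\dots,T_{K-1}$; you need the first failure from the bottom, which is how the paper's induction is organised.

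There are also two case-analysis gaps. In the generalised Baumslag--Solitar branch you only treat the relation $a^m=b^n$; Theorem \ref{exceptional_intersection_improved} also produces relations of the second type $ac^ma^{-1}=c^n$, which are not of powered form, and the paper needs Lemmas \ref{first_exceptional_intersection} and \ref{second_exceptional_intersection} to conclude that either the tower term $X_i$ carrying the $w$-subgroup or the next term $X_{i+1}$ is powered. In the $2$-free branch you assert that $X_K$ is a primitive extension complex, but that definition requires $\chi(X_K)=0$; when $\chi(X_K)<0$ the paper instead deduces $2$-freeness and hence stability of the whole tower (conclusion (1)) via \cite[Theorems 7.8 \& 7.9]{me_22}. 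Finally, your opening reduction --- that the tower is stable ``by definition'' when $\pi_1(X_N)$ is $2$-free or has torsion --- is false as stated: stability is a property of the splittings at every level, and a hypothesis on the top group says nothing about the lower ones; the relevant statement concerns $\pi_1(X_0)$ and is a theorem of \cite{me_22}, not a definition.
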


\begin{proof}
Let $p:Y\immerses X$ be the cyclic cover such that $X_1$ is a tree domain for $p$. Let $A = X_1\cap -1\cdot X_1$ and $B = X_1\cap 1\cdot X_1$, where the $\Z$ action is the covering action. Up to possibly adding finitely many edges to $X$ (and so to $X_1$), we may assume that $A\cap B$ is connected. Now by \cite[Lemma 7.8]{me_22}, either $X_1\immerses X_0 = X$ is stable, or $\pi_1(A)\cap \pi_1(B)\neq \pi_1(A\cap B)$. If $X_1\immerses X_0$ is stable, we proceed by induction. So suppose that it is not. By Theorem \ref{exceptional_intersection}, either $\pi_1(X_1)$ is $2$-free, or there is an immersion $Q\immerses X_1$ representing a $w$-subgroup such that $\chi(Q) = 0$ and $\pi_1(Q)$ is a generalised Baumslag--Solitar group. Let us first consider the latter case. By definition of factored one-relator towers, there is some $i$ such that $X_i = Q$. Moreover, by Lemmas \ref{first_exceptional_intersection} and \ref{second_exceptional_intersection}, either $X_i$ is a powered one-relator complex, or $X_{i+1}$ is. So now let us consider the case that $\pi_1(X_1)$ is $2$-free. If $\chi(X)<0$, then by definition, we have that $X$ is $2$-free. Thus, by \cite[Theorems 7.8 \& 7.9]{me_22}, we have that $X_N\immerses ...\immerses X_0 = X$ is a stable tower. If $\chi(X) = 0$, then $X$ is a primitive extension complex.
\end{proof}

\subsection{Improved one-relator towers}

Recall that a \emph{Magnus subgroup} of a one-relator group $\langle \Sigma \mid w\rangle$ is a subgroup generated by a subset of the generators $A\subset \Sigma$ such that $A$ omits a generator that appears in the cyclic reduction of $w$. Using Proposition \ref{pe_hierarchy} we may now prove that Magnus subgroups of hyperbolic one-relator groups are quasi-convex. This was previously known in the case of one-relator groups with torsion by Newman's spelling theorem and in the case of $2$-free one-relator groups by \cite[Theorems 8.1 \& 8.2]{me_22}. 

\begin{theorem}
\label{quasiconvex_magnus}
Magnus subgroups of hyperbolic one-relator groups are quasi-convex.
\end{theorem}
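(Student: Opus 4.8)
The plan is to prove quasi-convexity by induction along a maximal factored one-relator tower, using the structural dichotomy provided by Proposition \ref{pe_hierarchy} together with the combination-theorem machinery from \cite{me_22}. Let $G = \langle \Sigma \mid w \rangle$ be a hyperbolic one-relator group, realised as $\pi_1(X)$ for a one-relator complex $X$, and fix a maximal factored one-relator tower $X_N \immerses \cdots \immerses X_0 = X$ whose existence is guaranteed by Proposition \ref{factored_tower}. A Magnus subgroup of $G$ is a free subgroup generated by a subset of $\Sigma$ omitting a generator in the cyclic reduction of $w$; it suffices to establish quasi-convexity for these.

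First I would dispose of the degenerate cases. If $\pi_1(X)$ has torsion, quasi-convexity of Magnus subgroups is classical via Newman's Spelling Theorem \cite{newman_68}, and if $\pi_1(X)$ is $2$-free the result is exactly \cite[Theorems 8.1 \& 8.2]{me_22}. So the substance lies in the remaining case, where $\chi(X) < 0$ and the group is neither torsion nor $2$-free. Here I would invoke Proposition \ref{pe_hierarchy}: since $G$ is hyperbolic, it contains no Baumslag--Solitar subgroup, so no $X_K$ along the tower can be a powered one-relator complex (a powered complex has $\pi_1$ a non-cyclic generalised Baumslag--Solitar group, hence contains a Baumslag--Solitar subgroup). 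This rules out the third alternative of Proposition \ref{pe_hierarchy}. Similarly, a primitive extension complex $X_K$ with $\chi(X_K)=0$ would, by the discussion after its definition, make $\pi_1(X_K) * F$ a primitive exceptional intersection group; such groups are $2$-free by Theorem \ref{primitive_negative_immersions}, and their hyperbolicity is exactly the hypothesis we wish to leverage. Thus we may conclude that the factored tower is \emph{stable}, i.e.\ the first alternative holds, and the pointwise stabilisers of long enough segments in each Bass--Serre tree $T_i$ have rank at most one.

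The core of the argument is then to propagate quasi-convexity down the tower using stability. At each stage $\pi_1(X_i) \isom \pi_1(X_{i+1}) *_{\psi_i}$ is an HNN-extension whose edge groups $\tilde{A}_{i+1}, \tilde{B}_{i+1}$ are Magnus subgroups of $\pi_1(X_{i+1})$. By the inductive hypothesis these edge groups are quasi-convex in the (hyperbolic) vertex group, and stability of the tower ensures the acylindricity needed to apply the combination theorem of Bestvina--Feighn \cite{bestvina_92_combination}: a stable tower yields an acylindrical action, so $\pi_1(X_i)$ is hyperbolic and, crucially, the edge and vertex subgroups remain quasi-convex in $\pi_1(X_i)$. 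A given Magnus subgroup $M = \langle A \rangle$ of $G$ is carried, through the covering and tree-domain inclusions, to a Magnus subgroup of some $\pi_1(X_i)$; one tracks $M$ along the tower and shows at each HNN-step that a quasi-convex subgroup of the vertex group which is compatible with the edge structure remains quasi-convex in the total group, using that quasi-isometric embeddings compose and that stability controls the distortion at the edges.

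The main obstacle I anticipate is the base-step/transition involving primitive extension complexes: although Proposition \ref{pe_hierarchy} tells us a primitive extension terminal complex $X_K$ forces the truncated tower $X_K \immerses \cdots \immerses X_0$ to be stable, the terminal group $\pi_1(X_K)$ is itself a two-generator one-relator group of primitive exceptional intersection type, and I must certify quasi-convexity of \emph{its} relevant subgroups directly rather than by further descent. The clean way to handle this is to observe that since $\pi_1(X_K)*F$ is a primitive exceptional intersection group, it is $2$-free by Theorem \ref{primitive_negative_immersions}, so \cite[Theorems 8.1 \& 8.2]{me_22} apply to give quasi-convex Magnus subgroups at the bottom of the tower; stability of the tower above then carries this up to $G$. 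Verifying that the Magnus subgroup of $G$ we started with really does restrict to a quasi-convex subgroup at level $K$ compatible with the edge splittings — i.e.\ that the bookkeeping of which generators survive under the tree-domain and cyclic-cover maps is consistent — is the technical heart, and is where the malnormality and exceptional-intersection analysis of the earlier sections will be needed to exclude the degenerate overlaps.
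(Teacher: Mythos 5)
Your overall strategy is the paper's: take a maximal factored tower (Proposition \ref{factored_tower}), apply the trichotomy of Proposition \ref{pe_hierarchy}, rule out powered complexes via the absence of Baumslag--Solitar subgroups, and propagate quasi-convexity up the stable part of the tower by induction, handling the fully stable case with \cite[Theorems 8.1 \& 8.2]{me_22}. However, your base case at the primitive extension complex $X_L$ contains a genuine error. You assert that $\pi_1(X_L)*F$ is a primitive exceptional intersection group and hence $2$-free by Theorem \ref{primitive_negative_immersions}, so that \cite[Theorems 8.1 \& 8.2]{me_22} apply at the bottom. That misreads the definition: it is $\pi_1(Z)*F$ --- where $Z$ is a one-relator tree domain inside a cyclic cover of $X_L$ --- that is a primitive exceptional intersection group, not $\pi_1(X_L)*F$. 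The group $\pi_1(X_L)$ itself is a two-generator one-relator group which is typically \emph{not} $2$-free: any hyperbolic primitive extension group that is not free is two-generated and non-free (e.g.\ hyperbolic one-relator ascending HNN-extensions of free groups), so it cannot be $2$-free, and the cited theorems do not apply to it. The observation you are missing is the paper's one-line base case: since $\chi(X_L)=0$, every Magnus subgroup of $\pi_1(X_L)$ is generated by at most one generator, hence cyclic, hence quasi-convex in the hyperbolic group $\pi_1(X_L)$.

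Two further points. Your intermediate claim that the primitive extension alternative can be dismissed so that ``the factored tower is stable'' is false as stated (and is contradicted by your own final paragraph); Proposition \ref{pe_hierarchy} only gives stability of the truncated tower above $X_L$, and the portion below $X_L$ is simply discarded. Also, both your base case and your combination step need each $\pi_1(X_i)$ to be hyperbolic; this does not follow formally from hyperbolicity of $\pi_1(X)$ and is supplied in the paper by \cite[Corollary 7.8]{gersten_96}. With these repairs your argument coincides with the paper's proof.
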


\begin{proof}
Let $X$ be a one-relator complex with $\pi_1(X)$ hyperbolic. By Proposition \ref{factored_tower}, there is a maximal factored one-relator tower $X_N\immerses...\immerses X_1\immerses X_0 = X$. By \cite[Corollary 7.8]{gersten_96}, $\pi_1(X_i)$ is hyperbolic for all $i\geq 1$. By Proposition \ref{pe_hierarchy}, either this tower is stable, or there is some $1\leq L<N$ such that $X_L\immerses...\immerses X_1$ is stable and $X_L$ is a primitive extension complex. 

In the first case, since hyperbolic groups cannot contain Baumslag--Solitar subgroups, the result follows by \cite[Theorems 8.1 \& 8.2]{me_22}. So now let us assume that we are in the second case. The proof proceeds by induction on tower length. For the base case $L=1$, since $\chi(X_L) = 0$, all Magnus subgroups of $\pi_1(X_L)$ must by quasi-convex as they are all cyclic. So now we assume the inductive hypothesis. In other words, that all Magnus subgroups of $\pi_1(X_1)$ are quasi-convex. Then by \cite[Theorems 3.6 \& 6.10]{me_22}, all Magnus subgroups of $\pi_1(X)$ are quasi-convex and the proof is complete.
\end{proof}

As a consequence, we may improve on the main tool developed in \cite{me_22}.

\begin{theorem}
\label{main_generalised}
Let $X$ be a one-relator complex and let $Z\immerses X$ be a one-relator tower. If $\pi_1(Z)$ is hyperbolic (and virtually special), then the following are equivalent:
\begin{enumerate}
\item\label{itm:quasiconvex} $Z\immerses X$ is a quasi-convex tower and $\pi_1(X)$ is hyperbolic (and virtually special),
\item\label{itm:acylindrical} $Z\immerses X$ is an acylindrical tower,
\item\label{itm:stable} $Z\immerses X$ is a stable tower and $\pi_1(X)$ contains no Baumslag--Solitar subgroups.
\end{enumerate}
Moreover, if any of the above is satisfied, then $\pi_1(Z)<\pi_1(X)$ is quasi-convex.
\end{theorem}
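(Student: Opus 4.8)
The plan is to prove the cyclic chain of implications \eqref{itm:quasiconvex}$\Rightarrow$\eqref{itm:acylindrical}$\Rightarrow$\eqref{itm:stable}$\Rightarrow$\eqref{itm:quasiconvex}, using the machinery from \cite{me_22} together with the newly established Theorem \ref{quasiconvex_magnus}. Throughout I write the tower as $Z = X_N \immerses \dots \immerses X_1 \immerses X_0 = X$ and denote by $T_i$ the Bass--Serre tree of the splitting $\pi_1(X_i) \isom \pi_1(X_{i+1}) *_{\psi_i}$, with edge groups given by the Magnus subgroups $A_{i+1}$ and $B_{i+1}$. The essential point is that each step is a statement about a single HNN-extension $\pi_1(X_i) \isom \pi_1(X_{i+1})*_{\psi_i}$, so after reducing to one tower step the implications become instances of the combination-theoretic results in \cite{me_22}.

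First I would treat \eqref{itm:quasiconvex}$\Rightarrow$\eqref{itm:acylindrical}. Here $\pi_1(X)$ is hyperbolic and the tower is quasi-convex, meaning each edge group $\tilde A_{i+1}, \tilde B_{i+1}$ embeds quasi-isometrically into $\tilde X_i$. A quasi-convex subgroup of a hyperbolic group whose Bass--Serre edge stabilisers do not fix arbitrarily long segments forces acylindricity; concretely, I would invoke the acylindricity criterion from \cite{me_22} (the quasi-convexity of the edge inclusions together with hyperbolicity of the ambient group bounds the length of segments with infinite pointwise stabiliser), so the tower is acylindrical. For \eqref{itm:acylindrical}$\Rightarrow$\eqref{itm:stable}, I would note that acylindricity is a strictly stronger condition than stability: by the reformulation of \cite[Lemma 7.4]{me_22} recorded in the excerpt, stability only requires segment stabilisers of bounded length to have \emph{rank at most one}, whereas acylindricity requires them to be \emph{finite}. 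Since edge groups here are infinite (Magnus subgroups are free of positive rank), finite segment stabilisers are trivial, hence certainly of rank at most one, giving stability directly. Moreover, an acylindrical tower cannot produce a Baumslag--Solitar subgroup, since $\bs(m,n)$ cannot act acylindrically on a tree \cite{minasyan_15}; this yields the second clause of \eqref{itm:stable}.

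The hard part, and the step where the new input is needed, is \eqref{itm:stable}$\Rightarrow$\eqref{itm:quasiconvex}. Here I assume the tower is stable and that $\pi_1(X)$ has no Baumslag--Solitar subgroups, and must conclude hyperbolicity (and virtual specialness), quasi-convexity of the tower, and quasi-convexity of $\pi_1(Z) < \pi_1(X)$. I would proceed by induction on the tower length $N$, so it suffices to handle a single stable step $\pi_1(X_0) \isom \pi_1(X_1)*_{\psi_0}$ with $\pi_1(X_1)$ hyperbolic and virtually special (the inductive hypothesis). Stability of the step means the edge groups are themselves free of rank at most one larger than the fixed segments, but the crucial ingredient is that the edge groups are Magnus subgroups of $\pi_1(X_1)$, so by Theorem \ref{quasiconvex_magnus} they are quasi-convex in the hyperbolic group $\pi_1(X_1)$. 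This is precisely the hypothesis required to apply the combination-theoretic machinery of \cite[Theorems 3.6 \& 6.10]{me_22}: quasi-convex, malnormal-enough edge groups with no Baumslag--Solitar obstruction give an acylindrical (hence, by \cite{bestvina_92_combination}, hyperbolic) HNN-extension, with virtual specialness preserved and with the edge and vertex groups remaining quasi-convex in the total space.

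I expect the genuine obstacle to be verifying that the abstract stability/no-$\bs$ hypothesis actually licenses the combination theorem at each step, since stability alone bounds only the rank of long segment stabilisers and does not a priori rule out the sort of exceptional-intersection behaviour that breaks acylindricity. This is exactly where Theorem \ref{quasiconvex_magnus} and the analysis of exceptional intersections (Theorem \ref{exceptional_characterisation}) pay off: quasi-convexity of Magnus subgroups upgrades a stable step to one satisfying the hypotheses of \cite[Theorems 3.6 \& 6.10]{me_22}, and the absence of Baumslag--Solitar subgroups together with stability rules out the non-acylindrical exceptional case. Finally, quasi-convexity of $\pi_1(Z)$ in $\pi_1(X)$ follows by transitivity of quasi-convexity along the tower, since each inclusion $\pi_1(X_{i+1}) < \pi_1(X_i)$ is a quasi-isometric embedding and a quasi-convex subgroup of a quasi-convex subgroup of a hyperbolic group is quasi-convex.
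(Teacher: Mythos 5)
Your proposal is correct and follows essentially the same route as the paper: the one genuinely new ingredient is Theorem \ref{quasiconvex_magnus} supplying quasi-convexity of the Magnus (edge) subgroups, after which the equivalences are delegated to the combination machinery of \cite{me_22} together with \cite[Theorem 13.1]{wise_21_quasiconvex}. The paper simply establishes (\ref{itm:acylindrical})$\Leftrightarrow$(\ref{itm:stable}) via \cite[Theorem 7.16]{me_22} and (\ref{itm:acylindrical})$\Leftrightarrow$(\ref{itm:quasiconvex}) via \cite[Theorem 3.6]{me_22} rather than your cyclic chain, but this is a cosmetic difference.
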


\begin{proof}
By Theorem \ref{quasiconvex_magnus}, Magnus subgroups of $\pi_1(Z)$ are quasi-convex. By \cite[Theorem 7.16]{me_22} and \cite[Theorem 13.1]{wise_21_quasiconvex}, we may establish the equivalence between (\ref{itm:acylindrical}) and (\ref{itm:stable}). Similarly, by \cite[Theorem 3.6]{me_22} and \cite[Theorem 13.1]{wise_21_quasiconvex}, we may establish the equivalence between (\ref{itm:acylindrical}) and (\ref{itm:quasiconvex}).
\end{proof}

Combining Proposition \ref{pe_hierarchy} and Theorem \ref{main_generalised} and using induction, we obtain the following corollary.

\begin{corollary}
\label{pe_corollary}
Let $X$ be a one-relator complex and suppose that $\pi_1(X)$ contains no Baumslag--Solitar subgroups.  If $X_N\immerses...\immerses X_1\immerses X_0 = X$ is a maximal factored one-relator tower, then one of the following holds:
\begin{enumerate}
\item $\pi_1(X)$ is hyperbolic and $X_N\immerses...\immerses X_1\immerses X_0 = X$ is an acylindrical tower,
\item $X_K$ is a primitive extension complex for some $K\geq 0$ and, if $\pi_1(X_K)$ is hyperbolic, then $\pi_1(X)$ is hyperbolic and $X_K\immerses...\immerses X_1\immerses X_0 = X$ is an acylindrical tower.
\end{enumerate}
\end{corollary}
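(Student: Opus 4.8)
The plan is to run the maximal factored tower through Proposition~\ref{pe_hierarchy}, discard its exceptional case using the hypothesis that $\pi_1(X)$ contains no Baumslag--Solitar subgroup, and then promote the surviving stability statements to hyperbolicity and acylindricity by a single application of Theorem~\ref{main_generalised}. First I would apply Proposition~\ref{pe_hierarchy} to $X_N\immerses\cdots\immerses X_0 = X$. This puts us in exactly one of three situations: (a) the whole tower is stable; (b) there is some $K\le N$ with $X_K$ a primitive extension complex and $X_K\immerses\cdots\immerses X_0$ stable; or (c) there is some $K\le N$ with $X_K$ a powered one-relator complex.

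The first task is to eliminate situation (c). If $X_K$ were powered, then $\pi_1(X_K)$ would be a non-cyclic group in which a relation $a^m = b^n$ holds, so by Lemma~\ref{first_exceptional_intersection} it would have infinite cyclic centre, and being non-cyclic and torsion-free it would then contain a copy of $\Z^2 = \bs(1,1)$ (generated by the centre together with any element of infinite order independent from it). Each arrow of a one-relator tower induces an injection on $\pi_1$ (tree-domain inclusions are vertex inclusions of HNN splittings, and covering maps are $\pi_1$-injective), so chaining gives $\pi_1(X_K)<\pi_1(X)$; hence $\pi_1(X)$ would contain a Baumslag--Solitar subgroup, contradicting the hypothesis. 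Thus situation (c) cannot arise, and we are left with (a) and (b).

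In situation (a) I would apply Theorem~\ref{main_generalised} to the tower $X_N\immerses\cdots\immerses X_0$ with $Z = X_N$. Since the tower is maximal, $\chi(X_N)=1$ and so $\pi_1(X_N)$ is finite cyclic, in particular hyperbolic. Stability of the tower together with the absence of Baumslag--Solitar subgroups is precisely condition~(\ref{itm:stable}) of Theorem~\ref{main_generalised}; the equivalence with the other two conditions then yields that the tower is acylindrical and that $\pi_1(X)$ is hyperbolic, which is the first conclusion. In situation (b) the assertion that $X_K$ is a primitive extension complex is immediate, so it remains to treat the conditional statement: assuming $\pi_1(X_K)$ is hyperbolic, I would apply Theorem~\ref{main_generalised} to the sub-tower $X_K\immerses\cdots\immerses X_0$ with $Z = X_K$. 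Stability of this sub-tower and the no-Baumslag--Solitar hypothesis again realise condition~(\ref{itm:stable}), so the equivalent conditions show that $X_K\immerses\cdots\immerses X_0$ is acylindrical and $\pi_1(X)$ is hyperbolic, completing the second conclusion.

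The argument is essentially a synthesis of the two cited results, with the real content carried by them, so I expect the main obstacle to be organisational rather than conceptual. One must check that the stability datum produced by Proposition~\ref{pe_hierarchy} in situation (b) refers to the same sub-tower $X_K\immerses\cdots\immerses X_0$ that is then fed into Theorem~\ref{main_generalised}, and that the latter may be invoked for the whole composite tower at once rather than one covering step at a time; this is legitimate because stability, acylindricity and quasi-convexity are defined for composite towers and the inductive ascent is already packaged inside that theorem. The only genuinely substantive verifications are that the base term $\pi_1(X_N)$ is hyperbolic and that a powered complex forces a Baumslag--Solitar subgroup, and both follow quickly once the $\pi_1$-injectivity of the tower maps is recorded.
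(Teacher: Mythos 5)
Your proposal is correct and takes essentially the same route as the paper, whose proof is the single remark that the corollary follows by combining Proposition \ref{pe_hierarchy} with Theorem \ref{main_generalised} (applying the latter to the stable sub-tower, with $\pi_1(X_N)$ cyclic and hence hyperbolic in the fully stable case). The one step worth tightening is your disposal of the powered case: a non-cyclic torsion-free group with infinite cyclic centre need not contain $\Z^2$ in general, so instead invoke Proposition \ref{gbs_relation} ($\pi_1(X_K)$ is a non-cyclic generalised Baumslag--Solitar group, hence contains a Baumslag--Solitar subgroup, contradicting the hypothesis) or simply Lemma \ref{powered_lemma}, under which $X_K$ is already a primitive extension complex and conclusion (2) holds with a vacuous conditional.
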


\section{Primitive extension groups}
\label{pe_section}

Let $i<j\in \Z$ and denote by 
\[
A_{i, j} = \{a^{t^i}, a^{t^{i+1}}, ..., a^{t^j}\}\subset F(a, t)~. 
\]
For each $p/q\in \Q_{>0}$, we define two new families of one-relator groups. The first family is parametrised by two words 
\begin{align*}
x &\in \langle A_{0, k-1}\rangle - \langle A_{1, k-1}\rangle~,\\
y &\in \langle A_{1, k}\rangle - \langle A_{1, k-1}\rangle~,
\end{align*}
such that $\pr_{p/q}(x, y)$ is a primitive exceptional intersection word of the first type (as an element of $F(A_{0, k})$). We then define:
\begin{equation}
\label{pe_1}
E_{p/q}(x, y) = \langle a, t \mid \pr_{p/q}(x, y)\rangle~.
\end{equation}
We call this a \emph{primitive extension group of the first type}.

The second family is parametrised by three words 
\begin{align*}
x &\in \langle A_{0, k-1}\rangle - \langle A_{1, k-1}\rangle~,\\
y &\in \langle A_{1, k}\rangle - \langle A_{1, k-1}\rangle~,\\
z &\in \langle A_{1, k-1}\rangle-1~,
\end{align*}
such that $\pr_{p/q}(xy, z)$ is a primitive exceptional intersection word of the second type (as an element of $F(A_{0, k})$). We then define:
\begin{equation}
\label{pe_2}
F_{p/q}(x, y, z) = \langle a, t \mid \pr_{p/q}(xy, z)\rangle~.
\end{equation}
We call this a \emph{primitive extension group of the second type}.

\begin{definition}
A group $G$ is a \emph{primitive extension group} if it is a primitive extension group of the first or second type.
\end{definition}

It follows from Corollary \ref{exceptional_splitting} that $E_{p/q}(x, y)$ has the following graph of groups decomposition:
\[
\begin{tikzcd}[sep=2cm]
{F(A_{0, k-1})} \arrow[rr, "{\langle A_{1, k-1}, x\rangle = \langle A_{1, k-1}, w^p\rangle}"', no head] &  & {F(A_{1, k-1})*\langle w\rangle} \arrow[rr, "{\langle A_{1, k-1}, w^q\rangle = \langle A_{1, k-1}, y\rangle}"', no head] &  & {F(A_{1, k})} \arrow[llll, "{\langle A_{0,k-1}\rangle = \langle A_{1, k}\rangle}"', no head, bend right]
\end{tikzcd}
\]

Similarly, it follows from Corollary \ref{exceptional_splitting_2} that $F_{p/q}(x, y, z)$ has the following graph of groups decomposition:
\[
\begin{tikzcd}[sep=2cm]
{F(A_{0, k-1})} \arrow[rr, "{\langle A_{1, k-1}, x\rangle = \langle A_{1, k-1}, x\rangle}"', no head] &  & H \arrow[rr, "{\langle A_{1, k-1}, y\rangle = \langle A_{1, k-1}, y\rangle}"', no head] &  & {F(A_{1, k})} \arrow[llll, "{\langle A_{0,k-1}\rangle = \langle A_{1, k}\rangle}"', no head, bend right]
\end{tikzcd}
\]
where $H$ takes the following form:
\[
\begin{tikzcd}[sep=2cm]
{F(A_{1, k-1})} \arrow[r, "\langle z\rangle = \langle w^p\rangle", no head] & \langle w\rangle \arrow[r, "\langle w^q\rangle = \langle xy\rangle", no head] & {F(x, y)}
\end{tikzcd}
\]
Note that since $xy$ is primitive in $F(x, y)$, we see that 
\[
H \isom F(A_{1, k-1}, x)\underset{\langle z\rangle = \langle w^p\rangle}{\ast}\langle w\rangle.
\]

If $G$ is a finitely presented group, we denote by $\delta_G$ its Dehn function. Although the class of one-relator groups containing groups with Dehn function not bounded by any finite tower of exponentials \cite{gersten_92_l1}, the Dehn function of primitive extension groups cannot be worse than exponential.

\begin{lemma}
If $G$ is a primitive extension group, then $\delta_G(n)\preceq \exp(n)$.
\end{lemma}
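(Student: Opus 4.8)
The goal is to show that a primitive extension group $G$ has Dehn function bounded above by an exponential. The natural strategy is to exploit the explicit graph of groups decompositions provided immediately before the statement: in either type, $G$ is built from finitely generated free groups and infinite cyclic groups amalgamated along finitely generated subgroups, together with the auxiliary vertex group $H$ (itself an amalgam of a free group with $\langle w\rangle$ over cyclic edge groups). Since free groups have linear Dehn function and $\Z$ is trivially linear, the entire estimate reduces to controlling how Dehn functions behave under the amalgamated products and HNN-extensions appearing in these splittings.

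The plan is to appeal to a combination theorem for Dehn functions of graphs of groups. The relevant input is that if a group splits as a finite graph of groups with finitely presented vertex groups, and crucially the edge groups are \emph{undistorted} (quasi-isometrically embedded) in the adjacent vertex groups, then the Dehn function of the total group is bounded in terms of the vertex Dehn functions and the distortion of the edge maps. First I would verify that all the edge groups in both decompositions are free factors or finite-index-related subgroups of the free vertex groups, hence quasi-convex and undistorted there; the only subtle maps are those involving $\langle w\rangle$, where an edge group like $\langle w^p\rangle$ sits inside $\langle w\rangle\cong\Z$ with linear (indeed, at most linear) distortion. Each edge group is cyclic or a free factor, so all edge inclusions are at worst linearly distorted.

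With undistortedness in hand, the combination theorem yields at most an exponential blow-up: each amalgamation or HNN step over an undistorted edge group multiplies the relevant filling area bound by a controlled factor, and iterating finitely many such steps over vertex groups with linear Dehn function produces an exponential overall bound $\delta_G(n)\preceq\exp(n)$. Concretely I would first bound $\delta_H$ using the splitting of $H$ as $F(A_{1,k-1},x)\ast_{\langle z\rangle=\langle w^p\rangle}\langle w\rangle$ — a single amalgam of free-and-cyclic groups over undistorted cyclic edge groups, which gives $\delta_H$ at most exponential — and then feed this into the outer decomposition of $G$, where $H$ (or $F(A_{1,k-1})\ast\langle w\rangle$ in the first type) appears as a central vertex glued to two free groups along Magnus-type subgroups.

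The main obstacle is securing the precise hypothesis under which a graph-of-groups combination theorem applies and delivers an \emph{exponential} rather than worse bound: one must confirm that the edge subgroups $\langle A_{1,k-1},x\rangle$, $\langle A_{1,k-1},y\rangle$, $\langle A_{1,k-1},w^p\rangle$, $\langle A_{1,k-1},w^q\rangle$ are each quasi-isometrically embedded in their respective free vertex groups, so that the distortion contribution is polynomial (in fact linear) and only the iteration over finitely many edges produces the exponential. The Freiheitssatz and the fact that these are generated by subsets of a free basis together with a single extra element guarantee they are free factors or finite-rank subgroups, hence undistorted; once this is checked, the exponential upper bound follows formally from the standard estimate for Dehn functions of amalgams and HNN-extensions with undistorted edge groups (see, e.g., the area-distortion formula), and no finer analysis is required.
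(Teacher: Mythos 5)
Your proposal is correct and follows essentially the same route as the paper: the paper's proof is a one-line appeal to the explicit graph of groups decompositions (whose vertex groups are free or cyclic, hence hyperbolic, and whose finitely generated edge groups are undistorted in free groups and in $\Z$) together with the combination theorem of Bernasconi giving an exponential upper bound for Dehn functions of such graphs of groups. Your additional verification that the edge inclusions are undistorted is exactly the hypothesis the paper invokes implicitly.
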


\begin{proof}
Since primitive extension groups split as graphs of hyperbolic groups with finitely generated undistorted edge groups, the upper bound follows from \cite[Theorem 2]{bernasconi_94}.
\end{proof}

We say two one-relator complexes $X = (\Gamma, \lambda)$ and $Q = (\Delta, \omega)$ are \emph{Nielsen equivalent} if there is a homotopy equivalence $f:\Gamma\to \Lambda$ and a homeomorphism $s:S^1\to S^1$ such that $f\circ \lambda \simeq \omega\circ s$. This is a strong version of homotopy equivalence for one-relator complexes, introduced in \cite{louder_21}.

\begin{lemma}
\label{nielsen_equivalent}
If $X$ is a primitive extension complex, then $X$ is Nielsen equivalent to a presentation complex for (\ref{pe_1}) or (\ref{pe_2}).
\end{lemma}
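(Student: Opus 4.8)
The plan is to read the required presentation directly off the tower witnessing that $X$ is a primitive extension complex. Since $X=(\Gamma,\lambda)$ is a primitive extension complex we have $\chi(X)=0$, so $\Gamma$ is free of rank two and, after choosing a spanning tree, $\pi_1(X)=\langle a,t\mid w\rangle$ is a two-generator one-relator group; moreover there is a one-relator tower $Z\immerses X$ arising from an infinite cyclic cover $p:Y\immerses X$ classified by some $\phi:\pi_1(X)\to\Z$, with $\pi_1(Z)$ being $2$-free and the edge groups having exceptional intersection. First I would fix a free basis $\{a,t\}$ of $\pi_1(\Gamma)$ adapted to $\phi$, with $\phi(t)=1$ and $\phi(a)=0$. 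This change of basis is an automorphism of $\pi_1(\Gamma)$ and is therefore realised by a homotopy equivalence $f:\Gamma\to\Delta$ onto the rose $\Delta$ on $\{a,t\}$, which is exactly the freedom afforded by Nielsen equivalence. With respect to this basis the attaching word $w=[\lambda]$ has $t$-exponent sum zero.

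Next I would Magnus rewrite $w$ as a word $w'$ in the generators $a^{t^i}$ with $\mu\leq i\leq\nu$, where $\mu$ and $\nu$ are the smallest and largest $t$-exponents occurring in $w$; here $\mu<\nu$, since otherwise $w$ would be a power of a single conjugate of $a$ and $\pi_1(X)$ would contain torsion, contradicting Corollary \ref{torsion_free_exceptional}. The tree domain then identifies $\pi_1(Z)$ with $F(a^{t^\mu},\dots,a^{t^\nu})/\normal{w'}$, up to the free factor $F$ coming from the edges added to make the triple overlap $-1\cdot Z\cap Z\cap 1\cdot Z$ connected. By construction the two edge groups $\pi_1(Z\cap\pm 1\cdot Z)$ are the Magnus subgroups obtained by omitting the extreme generator on one side, namely $\langle a^{t^\mu},\dots,a^{t^{\nu-1}}\rangle$ and $\langle a^{t^{\mu+1}},\dots,a^{t^\nu}\rangle$, while their triple overlap has fundamental group $\langle a^{t^{\mu+1}},\dots,a^{t^{\nu-1}}\rangle$.

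The heart of the argument is to feed this into the exceptional intersection machinery. By definition of a primitive extension complex these two Magnus subgroups have exceptional intersection, and $\pi_1(Z)*F$ is $2$-free, so Theorem \ref{exceptional_characterisation} places us in case (\ref{itm:not_neg}): $\pi_1(Z)*F$ is a primitive exceptional intersection group. What I would extract from the proof of that theorem, via Theorem \ref{exceptional_intersection_improved}, is that with respect to the present decomposition $A=\{a^{t^\mu}\}$, $B=\{a^{t^{\mu+1}},\dots,a^{t^{\nu-1}}\}$, $C=\{a^{t^\nu}\}$ the relator $w'$ is itself a primitive exceptional intersection word of the first or second type. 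The crucial point is that the $\Z$-shift forces $A$ and $C$ to be precisely the single extreme generators, so the decomposition realised by the tower is exactly the one appearing in the definitions of Section \ref{pei_section}.

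Finally, shifting indices so that $\mu=0$ and setting $k=\nu-\mu$, we have $\langle A,B\rangle=\langle A_{0,k-1}\rangle$, $\langle B\rangle=\langle A_{1,k-1}\rangle$ and $\langle B,C\rangle=\langle A_{1,k}\rangle$, and $w'=\pr_{p/q}(x,y)$ in the first type or $w'=\pr_{p/q}(xy,z)$ in the second type, with $x,y,z$ satisfying exactly the membership conditions required in (\ref{pe_1}) or (\ref{pe_2}). Substituting $a^{t^i}=t^{-i}at^i$ shows that $w$, read as a word in $a$ and $t$, equals this same primitive exceptional intersection word, so $\langle a,t\mid w\rangle$ is literally the presentation (\ref{pe_1}) or (\ref{pe_2}); taking $\omega$ to be the attaching map of the corresponding presentation complex and $s:S^1\to S^1$ the rotation matching the cyclic rewriting, we obtain $f\circ\lambda\simeq\omega\circ s$, which is the desired Nielsen equivalence. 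I expect the main obstacle to be the third step: pinning down that the abstract classification of Theorem \ref{exceptional_characterisation} is realised by the concrete shift-decomposition with single-generator $A$ and $C$, and correctly accounting for the free factor $F$ introduced when edges are added to $Z$ to connect the triple overlap.
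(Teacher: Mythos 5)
Your proposal is correct and follows essentially the same route as the paper: realise a change of basis adapted to the epimorphism $\phi$ by a homotopy equivalence of roses (the paper does this explicitly, sending the new generators to the Christoffel words $\pr_{p/q}(a,b)$ and $\pr_{c/d}(a,b)$ with $cp-dq=1$), then read the presentation (\ref{pe_1}) or (\ref{pe_2}) off the shift structure of the tree domain via Theorem \ref{exceptional_intersection_improved}. The one point the paper is slightly more careful about is that after the basis change the new tree domain $Z'$ is homotopy equivalent to a one-relator subcomplex of the original cyclic cover, so the exceptional-intersection hypothesis transfers to $Z'$; your write-up silently assumes this transfer, but otherwise the two arguments match.
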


\begin{proof}
Denote by $X = (\Gamma, \lambda)$. Without loss, we may assume that $\Gamma$ is a rose graph. Denote by $a$ and $b$ the two edges in $\Gamma$. This then yields a one-relator presentation $\langle a, b \mid w\rangle$ for $\pi_1(X)$.

By definition, there is a cyclic cover $p:Y\immerses X$ and a one-relator tree domain $Z\subset Y$ such that
\[
\pi_1(Z\cap 1\cdot Z)\cap \pi_1(Z\cap -1\cdot Z)\neq \pi_1(-1\cdot Z\cap Z\cap 1\cdot Z),
\]
after possibly adding edges to $Z$ and extending the $\Z$-action so that $-1\cdot Z\cap Z\cap 1\cdot Z$ is connected. 

Now the epimorphism $\pi_1(X)\to \Z$ is induced by an epimorphism $\pi_1(\Gamma)\to \Z$. Suppose that $a$ maps to zero and $b$ maps to $\pm1$ (or vice versa) under this homomorphism. Then we see that $\pi_1(Z)$ is conjugate to $\langle a, b^{-1}ab, ..., b^{-k}ab^k\rangle$ for some $k\geq 0$. Moreover, that $-1\cdot Z\cap Z\cap 1\cdot Z$ is connected and that $\langle a, b\mid w\rangle$ is thus a primitive extension presentation.

Suppose instead that $a$ maps to $p$ and $b$ maps to $-q$ under the homomorphism to $\Z$, with $p, q>0$. Let $X' = (\Gamma', \lambda')$ be the one-relator complex where $\Gamma'$ has two edges, $x$ and $y$, and $\lambda'$ is pulled back via the homotopy equivalence $f:\Gamma'\to \Gamma$ defined by mapping $x$ to $\pr_{p/q}(a, b)$ and $y$ to $\pr_{c/d}(a, b)$, where $c, d$ are integers such that $cp - dq = 1$. The two one-relator complexes are Nielsen equivalent by construction. Now the induced epimorphism $\pi_1(X')\to \Z$ maps $x$ to zero and $y$ to one and so, as before, there is a one-relator tree domain $Z'\subset Y'\immerses X'$ such that $-1\cdot Z'\cap Z'\cap 1\cdot Z'$ is connected. There is also an induced homotopy equivalence between $Z'$ and a one-relator subcomplex of $Y$. In particular, we see that we must have
\[
\pi_1(Z'\cap 1\cdot Z')\cap \pi_1(Z'\cap -1\cdot Z')\neq \pi_1(-1\cdot Z'\cap Z'\cap 1\cdot Z').
\]
Therefore, this implies that $X'$ is a presentation complex of a primitive extension group of the form (\ref{pe_1}) or (\ref{pe_2}).
\end{proof}

Before proving our main theorem, we first mention some examples of one-relator groups that are primitive extension groups.

\begin{example}
If $p, q> 0$ are coprime integers, then:
\[
E_{p/q}(a, t^{-1}a^{\pm1}t) \isom \bs(p, q^{\pm1})
\]
\end{example}

\begin{example}
A one-relator group $\langle a, b \mid w \rangle$ in which $\langle a\rangle\cap \langle b\rangle\neq 1$ is a primitive extension group by Lemma \ref{powered_lemma}. More concretely, for all non-zero integers $m, n$, the one-relator group $\langle a, b \mid a^m = b^n\rangle$ is a primitive extension group.
\end{example}

\begin{example}
\label{ascending_example}
Any one-relator group that splits as an ascending HNN-extension of a finitely generated free group (or in other words, has non-trivial BNS invariant) is a primitive extension group for the following reason: if $X$ is a one-relator complex, then by Brown's criterion \cite{brown_87}, $\pi_1(X)$ splits as an ascending HNN-extension of a free group if and only if there is a one-relator tower $Z\immerses X$ such that $\pi_1(Z) = \pi_1(Z\cap 1\cdot Z)$ or $\pi_1(Z) = \pi_1(-1\cdot Z\cap Z)$ (see also \cite[Section 5.3]{me_22}). Thus, when $\pi_1(X)$ has non-trivial BNS invariant, $\pi_1(Z)$ has an exceptional intersection and is free, so $X$ is a primitive extension complex. In fact, all such one-relator groups correspond to the subfamily $E_{p}(a, y)$.
\end{example}

\begin{theorem}
\label{main}
A one-relator group is hyperbolic (and virtually special) if its primitive extension subgroups are hyperbolic (and virtually special).
\end{theorem}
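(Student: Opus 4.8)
The plan is to run the hierarchy machinery of the previous sections on a maximal factored one-relator tower and to use the hypothesis precisely to discharge the standing ``no Baumslag--Solitar subgroup'' assumption of Corollary \ref{pe_corollary}. Write $G = \pi_1(X)$ for a one-relator complex $X$, and suppose every primitive extension subgroup of $G$ is hyperbolic (and virtually special). By Proposition \ref{factored_tower} I would fix a maximal factored one-relator tower $X_N\immerses\cdots\immerses X_0 = X$ and apply Proposition \ref{pe_hierarchy}, which presents three alternatives: the whole tower is stable; some $X_K$ is a stable primitive extension complex; or some $X_K$ is a powered one-relator complex.

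The first thing to observe is that the hypothesis excludes the powered case, and more generally forces $G$ to be Baumslag--Solitar-free. By Lemma \ref{powered_lemma} a powered one-relator complex is a primitive extension complex, and its fundamental group is a non-cyclic generalised Baumslag--Solitar group, hence contains a Baumslag--Solitar subgroup and is not hyperbolic. Since $\pi_1(X_K)$ injects into $G$ as a primitive extension subgroup, this contradicts the hypothesis, so the powered case never occurs. The same mechanism is what I would use to rule out Baumslag--Solitar subgroups altogether: any such subgroup is a non-free two-generator subgroup, and through the exceptional-intersection analysis underlying Proposition \ref{pe_hierarchy} (namely Theorem \ref{exceptional_characterisation} together with Lemma \ref{powered_lemma}) it is captured by a generalised Baumslag--Solitar, and therefore primitive extension, subgroup of $G$, which would be non-hyperbolic.

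With $G$ known to contain no Baumslag--Solitar subgroup, I would then invoke Corollary \ref{pe_corollary} directly. Its first alternative already asserts that $G$ is hyperbolic and that the tower is acylindrical, finishing the argument. In its second alternative some $X_K$ is a primitive extension complex; then $\pi_1(X_K)$ is a primitive extension subgroup of $G$ and so is hyperbolic (and virtually special) by hypothesis, whereupon that alternative yields that $\pi_1(X) = G$ is hyperbolic (and virtually special) and the subtower $X_K\immerses\cdots\immerses X_0 = X$ is acylindrical. The virtually special refinement propagates in lockstep, since the cited inputs feeding Corollary \ref{pe_corollary} (Theorem \ref{main_generalised} and \cite[Theorems 3.6, 6.10]{me_22}) carry the virtually special conclusion alongside hyperbolicity.

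The main obstacle is the Baumslag--Solitar-free step. The subtlety is that a stable tower over a hyperbolic vertex group need not be acylindrical---this is exactly why the no-Baumslag--Solitar clause appears in Theorem \ref{main_generalised}(3)---so one cannot conclude hyperbolicity from stability alone. The delicate point to establish is that in a \emph{maximal factored} tower the factoring has already extracted every exceptional $w$-subgroup, so that any Baumslag--Solitar subgroup of $G$ is either witnessed by a powered complex in the tower or is conjugate into a flagged primitive extension complex $\pi_1(X_K)$; in either case it lies inside a primitive extension subgroup, and hyperbolicity of that subgroup (from the hypothesis) forbids its existence. Once this detection statement is secured, the remaining bookkeeping is routine given Corollary \ref{pe_corollary} and Theorem \ref{main_generalised}.
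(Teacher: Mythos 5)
Your skeleton is the right one---show $G$ has no Baumslag--Solitar subgroups and then feed a maximal factored tower into Corollary \ref{pe_corollary}---but the step you yourself single out as ``the delicate point,'' namely Baumslag--Solitar-freeness, is left as an unproved detection statement, and the route you sketch for it does not work as stated. You propose that any Baumslag--Solitar subgroup of $G$ is ``captured'' by a generalised Baumslag--Solitar $w$-subgroup via Theorem \ref{exceptional_characterisation} together with the tower. But Theorem \ref{exceptional_characterisation} only controls non-free two-generator subgroups of an \emph{exceptional intersection group}, and $G$ itself need not be one; nor is it established anywhere that an arbitrary $\bs(m,n)$ subgroup of $G$ is conjugate into a vertex group of the tower, let alone into a flagged $w$-subgroup or primitive extension complex. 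Proving such a statement would be substantial extra work. The paper avoids all of this with a one-line observation that is available to you but unused: $\bs(1,n)$ is \emph{itself} a primitive extension group (Example \ref{ascending_example}: one-relator ascending HNN-extensions of finitely generated free groups form the subfamily $E_{p}(a, y)$, and in particular include every $\bs(1,n)$), and every Baumslag--Solitar group contains some $\bs(1,n)$. Hence a Baumslag--Solitar subgroup of $G$ would immediately produce a non-hyperbolic primitive extension subgroup, contradicting the hypothesis directly---no tower and no detection lemma are needed for this step.

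Two smaller points. First, when you apply the hypothesis to $\pi_1(X_K)$ for a primitive extension \emph{complex} $X_K$ arising in Corollary \ref{pe_corollary}, you must know that this group is a primitive extension \emph{group} in the sense of Section \ref{pe_section}, i.e.\ given by a presentation of the form (\ref{pe_1}) or (\ref{pe_2}); that identification is exactly Lemma \ref{nielsen_equivalent}, which your argument never cites but cannot do without. Second, once Baumslag--Solitar-freeness is in hand by the direct argument above, your separate treatment of the powered case of Proposition \ref{pe_hierarchy} becomes redundant: a powered complex has non-cyclic generalised Baumslag--Solitar fundamental group, which cannot occur in a Baumslag--Solitar-free group, and in any case Corollary \ref{pe_corollary} already packages the remaining case analysis.
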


\begin{proof}
Let $G$ be a one-relator group. Suppose that all primitive extension subgroups of $G$ are hyperbolic (and virtually special). Since $\bs(1, n)$ is a primitive extension group and all Baumslag--Solitar groups contain some $\bs(1, n)$ as a subgroup, $G$ contains no Baumslag--Solitar subgroup. Now by Corollary \ref{pe_corollary} and Lemma \ref{nielsen_equivalent}, it follows that $G$ is hyperbolic (and virtually special).
\end{proof}

It is now immediate that in order to characterise hyperbolic one-relator groups, one only needs to characterise hyperbolic primitive extension groups.

\begin{corollary}
\label{gersten_corollary}
Gersten's conjecture is true if it is true for primitive extension groups.
\end{corollary}

\bibliographystyle{amsalpha}
\bibliography{bibliography}

\end{document}